\documentclass[reqno, 11pt, a4paper]{amsart}
\usepackage{latexsym,ifthen,xspace}
\usepackage{amsmath,amssymb,amsthm}
\usepackage{enumerate, calc, bbm}
\usepackage[usenames,dvipsnames]{xcolor}
\usepackage{paralist}
\usepackage[colorlinks=true, pdfstartview=FitV, linkcolor=blue,
  citecolor=blue, urlcolor=blue, pagebackref=false]{hyperref}
\usepackage[text={33pc,605pt},centering]{geometry}    
\usepackage{graphicx}
\usepackage{orcidlink}
\newtheorem{theorem}{Theorem}[section]
\newtheorem{lemma}[theorem]{Lemma}
\newtheorem{prop}[theorem]{Proposition}
\newtheorem{assumption}[theorem]{Assumption}

\theoremstyle{definition}

\theoremstyle{remark}
\newtheorem{remark}[theorem]{Remark}

\numberwithin{equation}{section}

\DeclareMathAlphabet{\mathsl}{OT1}{cmss}{m}{sl}
\SetMathAlphabet{\mathsl}{bold}{OT1}{cmss}{bx}{sl}

\newcommand{\al}{\ensuremath{\alpha}}

\newcommand{\cC}{\ensuremath{\mathcal C}}
\newcommand{\cD}{\ensuremath{\mathcal D}}
\newcommand{\cE}{\ensuremath{\mathcal E}}
\newcommand{\cF}{\ensuremath{\mathcal F}}

\newcommand{\cL}{\ensuremath{\mathcal L}}
\newcommand{\cM}{\ensuremath{\mathcal M}}

\newcommand{\bbR}{\ensuremath{\mathbb R}}

\newcommand{\bbZ}{\ensuremath{\mathbb Z}}
\newcommand{\bfE}{\ensuremath{\mathbf E}}

\newcommand{\bfP}{\ensuremath{\mathbf P}}

\newcommand{\R}{\mathbb R}
\newcommand{\scpr}[3]{%
  \ensuremath{%
    \bigl\langle
      #1, #2
    \bigr\rangle_{\raisebox{-0ex}{$\scriptstyle \ell^{\raisebox{.1ex}{$\scriptscriptstyle 2$}} (#3)$}}
  }
}
\newcommand{\norm}[3]{%
   \ensuremath{%
     \mathchoice{\bigl\lVert #1 \bigr\rVert}
     {\lVert #1 \rVert}
     {\lVert #1 \rVert}
     {\lVert #1 \rVert}_{\raisebox{-.0ex}{$\scriptstyle \ell^{\raisebox{.2ex}{$\scriptscriptstyle #2$}} (#3)$}}
   }
}

\newcommand{\Norm}[2]{%
  \ensuremath{%
    \mathchoice{\bigl\lVert #1 \bigr\rVert}
     {\lVert #1 \rVert}
     {\lVert #1 \rVert}
     {\lVert #1 \rVert}_{\raisebox{-.0ex}{$\scriptstyle #2$}}
  }
}

\DeclareMathOperator{\mean}{\mathbb{E}}
\DeclareMathOperator{\Mean}{\mathrm{E}}
\DeclareMathOperator{\prob}{\mathbb{P}} 
\DeclareMathOperator{\Prob}{\mathrm{P}} 

\DeclareMathOperator{\supp}{\mathrm{supp}}

\newcommand{\av}[1]{\mathop{\mathrm{av}}(#1)}

\newcommand{\ldef}{\ensuremath{\mathrel{\mathop:}=}}
\newcommand{\rdef}{\ensuremath{=\mathrel{\mathop:}}}

\newcommand{\indicator}{\mathbbm{1}}

\begin{document}

\title[Anchored Nash inequalities for RCM with long-range jumps]{Anchored Nash inequalities and heat kernel bounds for a class of random conductance models with long-range jumps}


\author{Sebastian Andres \orcidlink{0000-0002-5116-7789}}
\address{Technische Universit\"at Braunschweig}
\curraddr{Institut f\"ur Mathematische Stochastik, Universit\"atsplatz 2, 38106 Braunschweig}
\email{sebastian.andres@tu-braunschweig.de}
\thanks{}

\author{Xin Chen}
\address{Shanghai Jiao Tong University}
\curraddr{School of Mathematical Sciences, 200240 Shanghai, P.R. China.}
\email{chenxin217@sjtu.edu.cn}
\thanks{}

\author{Martin Slowik \orcidlink{0000-0001-5373-5754}}
\address{University of Mannheim}
\curraddr{Mathematical Institute, B6, 26, 68159 Mannheim}
\email{slowik@math.uni-mannheim.de}
\thanks{}

\author{Kun Yin}
\address{Shanghai Jiao Tong University}
\curraddr{School of Mathematical Sciences, 200240 Shanghai, P.R. China.}
\email{epsilonyk@sjtu.edu.cn}
\thanks{}

\subjclass[2000]{60K37, 60F17, 82C41, 82B43}

\keywords{Nash inequality, random conductance model, heat kernel, long-range jumps, percolation}

\date{\today}

\dedicatory{}

\begin{abstract}
  We show anchored versions of the Nash inequality for discrete non-local divergence-form operators with degenerate weights. They allow to control the $L^{2}$-norm of a function by Dirichlet forms that are not uniformly elliptic. We then use them to provide on-diagonal heat kernel upper bounds for a class of random conductance models with degenerate jump rates allowing long-range jumps. The results are established on a class of graphs including the integer lattice and possibly correlated supercritical percolation clusters.
\end{abstract}

\maketitle

\tableofcontents

\section{Introduction}\label{sec:INTRO}
Nash inequalities, introduced by Nash in his 1958 paper \cite{Na58} (see the first inequality on p.\ 936) to obtain H\"older continuity of solutions for PDEs associated with second order differential operators in divergence form, are closely related to the behaviour of the associated heat kernel. More precisely, for general symmetric Markov processes it is known that the Nash inequality is equivalent to  on-diagonal upper heat kernel bounds, see \cite{CKS87}. This naturally includes the class of reversible random walks on graphs. On the other hand, in the study of the random conductance model (RCM), which is a prototype of a reversible random walk in random environment, some form of upper heat kernel bound is one of the crucial ingredients, which has been widely used in several subjects including quenched invariance principles \cite{ABDH13, BB07, BCKW21, SS04}, and quenched local limit theorems \cite{ADS16, AT21, ACS21, CKW24, BH09, CH08}. We refer the reader to \cite{Bi11, Ku14, An25} for detailed surveys on this topic.  On the other hand, some progress has also been made for random walks in the $\al$-stable regime, where for any $x, y \in \bbZ^{d}$, an edge between $x$ and $y$ is present with a probability proportional to $|x-y|^{-(d+\al)}$, $\al > 0$.  For $\al \in (0,2)$, the RCM on such graphs is known to be outside the Gaussian universality class, that is, the scaling limit is a symmetric $\al$-stable L\'{e}vy process instead of Brownian motion, see \cite{CS13, CKW21}, and the heat kernel exhibits non-Gaussian decay \cite{CS12, CKW20}. In this paper we consider a class of random walks between the nearest neighbour and the $\al$-stable regime, that is they admit long-range jumps but they still exhibit a Gaussian large-scale behaviour due to a suitable moment condition on the ergodic random conductances weighted by the squared size of the jump. For a particular sub-class of such models on $\bbZ^{d}$ a quenched invariance principle and local limit theorem have  been established in \cite{BCKW21, CKW24} for general ergodic conductances satisfying certain moment conditions.

On the integer lattice $\bbZ^{d}$, a Nash inequality holds for nearest neighbour random walks with jump rates that are uniformly bounded away from zero, see e.g.~\cite{Ba17} and references therein.  However, in the case where the weights are not bounded away from zero, no such Nash inequality is available.  An anchored version of the Nash inequality has been introduced by Mourrat and Otto in \cite{MO16} for nearest neighbour random walks on $\bbZ^{d}$ with jump rates unbounded from below. While the standard Nash inequality gives a control of the $L^{2}$-norm of a function $f$ in terms of $\| \nabla f \|_{2}$ and $\| f \|_{1}$, the anchored version provides control on the $L^{2}$-norm of $f$ in terms of $\| W \nabla f\|_{2}$, $\| f\|_1$ and $\| |x|^{m/2}f \|_{2}$, where $W$ is a weight function. Note that the translation invariance of the standard Nash inequality is broken due to the presence of the term $\| |x|^{m/2}f \|_{2}$. Nevertheless, it is still possible to deduce upper heat kernel bounds from it. Indeed, in \cite{MO16} on-diagonal upper bounds are obtained under a suitable integrability condition on the lower bound of the nearest neighbour weights, which are still required to be uniformly bounded from above.  Remarkably, their results can also be applied to nearest neighbour random walk on degenerate time-dependent conductances.

In this paper, we will extend some of the results in \cite{MO16} in various aspects. We prove an anchored Nash inequality for a non-local operator associated with a random walk performing nearest neighbour and long-range jumps, where we allow a general reference measure and weights that are unbounded from above and not bounded away from zero, only satisfying a certain integrablility condition. In addition, we work in a  setting of a more general class of underlying graphs including random graphs such as possibly correlated percolation clusters. Then we use this version of the anchored Nash inequality to derive quenched and annealed  on-diagonal heat kernel upper bounds for degenerate long-range RCMs.

\subsection{The model}
We consider an infinite connected locally finite graph $(\boldsymbol{V}, \boldsymbol{E})$ with vertex set $\boldsymbol{V}$ and non-oriented edge set $\boldsymbol{E}$. We write $x \sim y$ if $\{x, y\} \in \boldsymbol{E}$. Let $d\colon \boldsymbol{V} \times \boldsymbol{V} \to [0, \infty)$ be the graph distance on $\boldsymbol{V}$ defined by
\begin{align*}
  d(x,y)
  \;\ldef\;
  \inf\bigl\{
    n \geq 0 : \exists\, (x_i)_{0 \leq i \leq n}
    \text{ s.th. }
    x_{0} = x,\, x_{n}=y,\, x_{i} \sim x_{i-1},
    \forall\, 1 \leq i \leq n
  \bigr\}.
\end{align*}
Throughout the paper we fix an element $o \in \boldsymbol{V}$ and define $\eta\colon \boldsymbol{V} \to [0,\infty)$ by
\begin{align}\label{e1-4}
  \eta(y)
  \;\ldef\;
  \max\{d(o, y), 1\},
  \qquad x \in \boldsymbol{V}.
\end{align}
Let $B(x, R) \ldef \{y \in \boldsymbol{V} : d(x,y) \leq \lfloor R \rfloor\}$ for every $R \geq 0$ and we write $B(o, R)$ as $B(R)$ to simplify notation. For every $1 \leq p < \infty$ and any finite set $A \subset \boldsymbol{V}$, we define the normalized $\ell^{p}$-norm of any $f\colon A \to \R$ (w.r.t.\ the counting measure) as
\begin{align*} 
  \|f\|_{p,A}
  \;\ldef\;
  \Biggl(
    \frac{1}{|A|} \sum_{y \in A} \bigl|f(y)\bigr|^{p}
  \Biggr)^{\!1/p},
\end{align*}
where $|A|$ denotes the number of elements of subset $A\subset \boldsymbol{V}$. Suppose $\theta$ is a positive, locally finite measure on $\boldsymbol{V}$ and define $\theta(y) \ldef \theta(\{y\}) \in (0, \infty)$, $y \in \boldsymbol{V}$. Then for every $p \geq 1$ and any $f\colon \boldsymbol{V} \to \R$, we define the discrete $L^{p}$-norm of $f$ with respect to $\theta$ as
\begin{align*}
  \norm{f}{p}{\boldsymbol{V}\!,\, \theta}
  \;\ldef\;
  \Biggl(
    \sum_{x \in \boldsymbol{V}} \bigl|f(y)\bigr|^{p}\, \theta(y)
  \Biggr)^{\!1/p}.
\end{align*}
Suppose that $\{W(x,y)\}_{x, y \in \boldsymbol{V}}$ is a family of non-negative real numbers satisfying $W(x, y) > 0$ for $x \sim y$ and
\begin{align*}
  W(x,y) \;=\; W(y,x),
  \qquad
  \mu(x) \;\ldef\; \sum_{y \in \boldsymbol{V}} W(x,y) \;<\; \infty,
  \qquad \forall\, x, y \in \boldsymbol{V}.
\end{align*}
Then, for any $m \geq 0$ we introduce the measures $\mu_{m}$ and $\nu$ on $\boldsymbol{V}$ by
\begin{align}\label{def:mu_nu}
  \mu_{m}(x)
  \;\ldef\;
  \sum_{y\in \boldsymbol{V}} W(x,y)\, d(x, y)^{m}
  \qquad \text{and} \qquad
  \nu(x)
  \;\ldef\;
  \sum_{ y \sim x} W(x,y)^{-1}.
\end{align}
In particular, in the nearest neighbour case, that is when $W(x,y) \neq 0$ only if $x \sim y$,  we have $\mu = \mu_{m}$ for all $m \geq 0$.

Denote by $C_{c}(\boldsymbol{V})$ the collection of all functions on $\boldsymbol{V}$ having a finite support. Given conductances $\{W(x,y)\}_{x,y\in \boldsymbol{V}}$ as above, we define a quadratic form $\cE$ on $C_{c}(\boldsymbol{V})$ by
\begin{align}\label{e1-3}
  \cE(f)
  \;\ldef\;
  \frac{1}{2}\, \sum_{x,y \in \boldsymbol{V}} W(x,y)
  \bigl( f(y) - f(x) \bigr)^{2},
  \qquad f \in C_{c}(\boldsymbol{V}).
\end{align}
By a standard procedure, see e.g.~\cite[Example 1.2.4]{FOT11}, we can extend $(\cE, C_{c}(\boldsymbol{V}))$ to a regular Dirichlet form $(\cE, \cD(\cE))$ on $L^{2}(\boldsymbol{V}\!, \theta)$, and there exists a strong Markov process $((X_{t})_{t \geq 0}, \{\Prob_{\!x}\}_{x \in \boldsymbol{V}})$ associated with $(\cE, \cD(\cE))$. It is easy to verify that the infinitesimal generator $\cL$ of $X = (X_{t})_{t \geq 0}$ has the following form,
\begin{align} \label{eq:operator}
  \cL f(x)
  \;=\;
  \frac{1}{\theta(x)}\sum_{y \in \boldsymbol{V}} W(x,y)\, \big( f(y) - f(x) \big),\qquad f\in C_c(\boldsymbol{V}).
\end{align}
If the random walk $X$ is currently at $x$, it will next move to $y$ with probability $W(x,y)/\mu(x)$, after waiting an exponential time with mean $\theta(x)/\mu(x)$ at the vertex $x$. In particular, when $W(x,y) \neq 0$ only if $x \sim y$, $X$ is the nearest neighbour random walk associated with  conductances $\{W(x,y) : x, y \in \boldsymbol{V}\}$ and reference measure $\theta$.

Perhaps the most natural choice for the speed measure is $\theta \equiv \mu$, for which we obtain the constant speed random walk (CSRW) that spends i.i.d.\ $\operatorname{Exp}(1)$-distributed waiting times at all vertices it visits. Another well-studied process, the variable speed random walk (VSRW), is recovered by setting $\theta \equiv 1$, so called because as opposed to the CSRW, the waiting time at a vertex $x$ does indeed depend on the location; it is an $\operatorname{Exp}(\mu(x))$-distributed random variable. We write
\begin{align*}
  p(t,x,y) \;\ldef\; \frac{\Prob_{\!x}\bigl[X_{t} = y\bigr]}{\theta(y)}
\end{align*}
for the (minimal) heat kernel corresponding to $X$. For any subset $Q\subset \boldsymbol{V}$, let
\begin{align*}
  p_{Q}(t, x, y)
  \;\ldef\;
  \begin{cases}
    \dfrac{\Prob_{\!x}\bigl[X_{t} = y, t < \tau_{Q}\bigr]}{\theta(y)},
    & x, y \in Q,
    \\
    0,
    & \text{otherwise,}
  \end{cases}
\end{align*}
be the Dirichlet heat kernel on $Q$ corresponding to $X$, where $\tau_{Q} \ldef \inf\{t \geq 0 : X_{t} \notin Q\}$ is the first exit time from $Q$. We will also make the following assumption on the graph $(\boldsymbol{V}, \boldsymbol{E})$ and conductances $\{W(x,y) : x, y \in \boldsymbol{V}\}$.
\begin{assumption}\label{a2-2}
  There exist positive constants $d \geq 2$, $R_{0} \geq 1$ and $\alpha_{0} \in [0, 1/2)$ such that following (large scale) regularity conditions hold for $(\boldsymbol{V}, \boldsymbol{E})$.
  \begin{enumerate}[(i)]
  \item
    There exist $c_{\mathrm{reg}}, C_{\mathrm{reg}} \in (0,\infty)$ such that for all $R \geq R_{0}$ and $R^{\alpha_{0}} \leq r \leq R$,
    \begin{align}\label{a2-2-1}
      c_{\mathrm{reg}}r^{d}
      \;\leq\;
      |B(x,r)|
      \;\leq\;
      C_{\mathrm{reg}} r^{d},
      \qquad \forall\, x \in B(R).
    \end{align}

  \item
    For every $d' > d$, there exists $R_{1} \geq R_{0}$ such that for every $1 < \rho < d'$ and $\rho_{*} > \rho$ satisfying
    \begin{align}\label{a2-2-3}
      \frac{d'}{\rho_{*}}
      \;=\;
      \frac{d'}{\rho}-1,
    \end{align}
    there exist $C_{0}, C_{\mathrm{S}} \in (0,\infty)$ and $\delta_{\mathrm{S}} \in [1, \infty)$ such that for any
    $R \geq R_{1}$, $x \in B(R)$, $C_{0} R^{\alpha_{0}} \leq r \leq R$ and any $f\colon B(x, r) \to \R$,
    \begin{align}\label{a2-2-2}
      \norm{f - (f)_{B(x, r)}}{\rho_{*}}{B(x, r)}
      \;\leq\;
      C_{\mathrm{S}}\, r^{1 - d/d'}
      \Biggl(
        \sum_{\substack{y, z \in B(x, \delta_{\mathrm{S}} r) \\ y \sim z}}
        \mspace{-15mu}\bigl| f(y) - f(z) \bigr|^{\rho}
      \Biggr)^{\!1/\rho},
    \end{align}
    %
    %
    where $(f)_{B(x, R)} \ldef \frac{1}{|B(x, R)|}\sum_{y \in B(x,R)} f(y)$.
  \end{enumerate}
\end{assumption}
\begin{remark}\label{rem:graph:Zd}
  The Euclidean lattice, $(\bbZ^{d}, E_{d})$, satisfies the Assumption~\ref{a2-2} with $R_{0} = R_{1} = 1$ and $\alpha_{0} = 0$. Indeed, it is easy to see that the volume growth condition \eqref{a2-2-1} holds for every $x \in \bbZ^{d}$ and $r \geq 1$. On the other hand, recall that by \cite[Theorem~4.1]{Co96} or \cite[Theorem~2.6 and Theorem~2.8]{MO16}, for all $\rho', \rho'_{*} \in (0, \infty)$ satisfying
  \begin{align*}
    \frac{d}{\rho'_{*}} \;=\; \frac{d}{\rho'} - 1,
  \end{align*}
  there exists $C_{\mathrm{S}}\in (0, \infty)$ such that for every $x \in \bbZ^{d}$ and $r \geq 1$,
  \begin{align}\label{t1-2-2a}
    \norm{f - (f)_{B(x, r)}}{\rho_{*}'}{B(x, r)}
    \;\leq\;
    C_{\mathrm{S}}\,
    \Biggl(
      \sum_{\substack{y, z \in B(x,r) \\ y \sim z}}
      \mspace{-9mu}\bigl| f(y) - f(z) \bigr|^{\rho'}
    \Biggr)^{\!\!1/\rho'}.
  \end{align}
  %
  %
  Now, given positive constants $\rho_{*}$, $\rho$ satisfying \eqref{a2-2-3}, let $\rho_{1} \ldef d\rho / (d-\rho)$ so that $d/\rho_{1} = d/\rho - 1$. Then, by using H\"older's inequality and \eqref{t1-2-2a} (with $\rho'_{*}=\rho_1$ and $\rho' = \rho$) we obtain that for every $x \in \bbZ^{d}$ and $r \geq 1$,
  \begin{align*}
    \norm{f - (f)_{B(x, r)}}{\rho_{*}'}{B(x, r)}
    &\;\leq\;
    c_{1} r^{d(1/\rho_{*} - 1/\rho_{1})}\,
    \norm{f - (f)_{B(x, r)}}{\rho_{1}}{B(x, r)}
    \\
    &\;\leq\;
    c_{2} r^{1-d/d'}\,
    \Biggl(
      \sum_{\substack{y, z \in B(x,r) \\ y \sim z}}
      \mspace{-9mu}\bigl| f(y) - f(z) \bigr|^{\rho}
    \Biggr)^{\!\!1/\rho}.
  \end{align*}
  %
\end{remark}

\subsection{Main results}
As our first main result we establish an  anchored Nash inequality  with respect to a general reference measure $\theta$ under an integrability condition on $\theta$, $\theta^{-1}$ and $\nu$ in the present setting of a graph satisfying Assumption~\ref{a2-2}.
\begin{theorem}[Anchored Nash inequality]\label{thm:nash}
  Suppose that Assumption~\ref{a2-2} holds and suppose there exist $p, q \in (1, \infty]$ satisfying
  \begin{align}\label{a1-1-2}
    \frac{1}{p} + \frac{1}{q} \;<\; \frac{2}{d},
  \end{align}
  such that
  \begin{align}\label{a1-1-1}
    \sup_{R \geq 1} \Norm{\theta}{p, B(R)}
    \,+\,
    \sup_{R \geq 1} \Norm{\theta^{-1}}{1, B(R)}
    \,+\,
    \sup_{R \geq 1} \Norm{\nu}{q, B(R)}
    \;<\;
    \infty.
  \end{align}
  Then for every $m \geq 2$, there exists $C_{1} \in (0, \infty)$ such that
  \begin{align}\label{p2-1-1}
    \norm{f}{2}{\boldsymbol{V}, \theta}^2
    \;\leq\;
    C_{1} R_{1}^{m}\, \cM\, \cE(f)^{m/(m+2)}\,
    \norm{\eta^{m/2} f}{2}{\boldsymbol{V}, \theta}^{4/(m+2)},
  \end{align}
  where $\eta\colon \boldsymbol{V} \to [0, \infty)$ is defined by \eqref{e1-4}, $R_{1}$ is the same constant as in Assumption~\ref{a2-2} (associated with some $d'$) and
  \begin{align}\label{def:M}
    \cM
    \;\ldef\;
    \sup_{R \geq 1} \Bigl(
      \Norm{1 \vee \theta}{p, B(R)}
      \cdot \Norm{1 \vee \nu}{q, B(R)}
      \,+\,
      \Norm{1 \vee \theta^{-1}}{1, B(R)}
    \Bigr).
  \end{align}
\end{theorem}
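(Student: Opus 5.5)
We follow the Mourrat--Otto strategy: split $\boldsymbol{V}$ into a ball $B(R)$ and its complement, and optimise over $R$ at the end. Outside the ball, $\eta\ge R$ gives immediately
\begin{align*}
  \sum_{y\notin B(R)} f(y)^2\theta(y) \;\le\; R^{-m}\,\norm{\eta^{m/2} f}{2}{\boldsymbol{V},\theta}^{2}.
\end{align*}
The main work is a local Sobolev-type estimate inside $B(R)$, of the form
\begin{align*}
  \sum_{y\in B(R)} f(y)^2\theta(y) \;\le\; C\,\cM\Bigl( R^{2}\,\cE(f) \;+\; R^{-d}\Bigl(\sum_{y\in B(R)} |f(y)|\Bigr)^{2}\Bigr),
\end{align*}
for $R\ge R_1$. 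Only nearest-neighbour edges of $\cE$ will be used, so the long-range part is simply discarded, $\cE(f)\ge \frac12\sum_{y\sim z}W(y,z)(f(y)-f(z))^2$.

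For the local estimate, take $r=R$ and $x=o$ in \eqref{a2-2-2}, with $d'>d$ chosen close to $d$. Write
\begin{align*}
  \sum_{B(R)} f^2\theta \;\le\; |B(R)|\,\Norm{\theta}{p,B(R)}\,\Norm{f}{2p',B(R)}^{2}
\end{align*}
by H\"older. Then split $f=(f-(f)_{B(R)})+(f)_{B(R)}$. The mean part is handled by Cauchy--Schwarz with the weight $\theta^{-1}$: since we want $L^1(\theta)$ control, $\sum|f|\le \bigl(\sum |f|\theta\bigr)$-type bounds appear through $\Norm{\theta^{-1}}{1,B(R)}$. More precisely, $(f)_{B(R)}^2\le |B(R)|^{-2}\bigl(\sum|f|\bigr)^2$, and the term $\bigl(\sum|f|\bigr)^2$ is itself bounded by interpolation, as explained below.

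For the oscillation part, apply \eqref{a2-2-2} with $\rho_*=2p'$ and $\rho$ determined by \eqref{a2-2-3}. Then use H\"older on the nearest-neighbour sum:
\begin{align*}
  \sum |\nabla f|^{\rho} \;\le\; \Bigl(\sum W|\nabla f|^2\Bigr)^{\rho/2}\Bigl(\sum W^{-\rho/(2-\rho)}\Bigr)^{(2-\rho)/2}.
\end{align*}
The last factor is controlled by $\Norm{\nu}{q,B(R)}$ provided $\rho/(2-\rho)\le q$, i.e.\ $1/\rho\ge 1/2+1/(2q)$. Together with $1/\rho=1/(2p')+1/d'$, this is exactly where \eqref{a1-1-2} enters: we need $\frac{1}{2}-\frac{1}{2p}+\frac{1}{d'}\ge\frac12+\frac1{2q}$, i.e.\ $1/p+1/q\le 2/d'$, which is achievable for $d'$ close to $d$. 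Tracking the powers of $R$ (with the factor $R^{1-d/d'}$ and the normalisations $|B(R)|\asymp R^d$ from \eqref{a2-2-1}) should give $R^{2}$ times $\cE(f)$, times $\Norm{\theta}{p}\Norm{\nu}{q}\le \cM$. This exponent bookkeeping is the step I expect to be delicate; if it does not close exactly, I would choose $d'$ and $\rho$ slightly differently and absorb the extra small power of $R$.

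To avoid an $L^1$ term in the final inequality, note that it contains only $\cE$ and the weighted second moment. The plan is therefore to bound $\sum_{B(R)}|f|$ by Cauchy--Schwarz as
\begin{align*}
  \sum_{B(R)}|f| \;\le\; \Bigl(\sum f^2\theta\Bigr)^{1/2}\Bigl(\sum_{B(R)}\theta^{-1}\Bigr)^{1/2} \;\le\; \bigl(\cM R^d\bigr)^{1/2}\,\norm{f}{2}{\boldsymbol{V},\theta}.
\end{align*}
This alone is not useful, since it returns $\norm{f}{2}{}^2$ with constant order one. Instead, use a dyadic decomposition: apply the local estimate on annuli/balls $B(2^kR)$ and use $\eta\ge 2^{k-1}R$ there. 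Concretely, bound
\begin{align*}
  \sum_{B(R)}|f| \;\le\; \Bigl(\sum_{B(R)}\theta^{-1}\eta^{-m}\Bigr)^{1/2}\,\norm{\eta^{m/2}f}{2}{\boldsymbol{V},\theta},
\end{align*}
and, summing over dyadic shells with $\theta^{-1}$ integrable, obtain
\begin{align*}
  R^{-d}\Bigl(\sum_{B(R)}|f|\Bigr)^{2} \;\lesssim\; \cM\, R^{-d}\,\norm{\eta^{m/2}f}{2}{\boldsymbol{V},\theta}^{2},
\end{align*}
since $m\ge2$ makes $\sum_{B(R)}\eta^{-m}\theta^{-1}\lesssim \cM\log R$ or better. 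A cruder bound, $\le \cM R^{-m}\norm{\eta^{m/2}f}{2}{}^2$ up to constants, also suffices after adjusting. Either way, combining all pieces gives
\begin{align*}
  \norm{f}{2}{\boldsymbol{V},\theta}^2 \;\le\; C\cM\bigl(R^2\cE(f)+R^{-m}\norm{\eta^{m/2}f}{2}{\boldsymbol{V},\theta}^2\bigr),\qquad R\ge R_1.
\end{align*}

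Finally, choose $R=\bigl(\norm{\eta^{m/2}f}{2}{\boldsymbol{V},\theta}^{2}/\cE(f)\bigr)^{1/(m+2)}$, which balances the two terms and yields \eqref{p2-1-1}. If this optimal $R$ is smaller than $R_1$, take $R=R_1$ instead. In that regime the inequality follows from the bound $R_1^{2}\cE\le R_1^{m}\cE^{m/(m+2)}\norm{\eta^{m/2}f}{2}{}^{4/(m+2)}$. This inequality holds in this case, and it accounts for the factor $R_1^{m}$ in \eqref{p2-1-1}.
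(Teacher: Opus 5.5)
\textbf{The gap is in the mean term.} With the plain mean $(f)_{B(R)}$, your local estimate leaves the term $R^{-d}\bigl(\sum_{B(R)}|f|\bigr)^{2}$. For $m>d$ this cannot be bounded by $R^{-m}\norm{\eta^{m/2}f}{2}{\boldsymbol{V},\theta}^{2}$: for $f=\indicator_{\{o\}}$ the first quantity is $R^{-d}$ and the second is $R^{-m}\theta(o)$.

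Your shell computation actually shows this. $\sum_{B(R)}\eta^{-m}\theta^{-1}$ is of order $\cM R^{d-m}$ for $m<d$ (not $\log R$), of order $\cM\log R$ for $m=d$, and $O(\cM)$ for $m>d$. After the factor $R^{-d}$ you therefore get $R^{-\min\{m,d\}}$, with a logarithm at $m=d$. The step ``either way, combining all pieces gives $\ldots R^{-m}\ldots$'' does not follow. Your route does give $R^{-m}$ for $2\le m<d$. It fails for $m\ge d$, which is exactly the case used later ($m_0>d$ in Proposition~\ref{p4-1}); optimising with $R^{-d}$ gives the wrong exponents.

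You could interpolate from some $m_1<d$ via $\norm{\eta^{m_1/2}f}{2}{\boldsymbol{V},\theta}^{2}\le\norm{f}{2}{\boldsymbol{V},\theta}^{2(1-m_1/m)}\norm{\eta^{m/2}f}{2}{\boldsymbol{V},\theta}^{2m_1/m}$. That recovers the exponents $m/(m+2)$ and $4/(m+2)$, but only with a higher power of $\cM$ than in \eqref{p2-1-1}.

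\textbf{The missing idea is to anchor the mean away from $o$.} Subtract $(fg)_{B(R)}$ instead, with $g=\frac{|B(R)|}{|A(R)|}\indicator_{A(R)}$ and $A(R)=B(R)\setminus B(\kappa R)$, so that $(g)_{B(R)}=1$.
\begin{itemize}
\item The Poincar\'e--Sobolev inequality \eqref{a2-2-2} survives this change at the cost of a factor $1+\Norm{g}{\rho_*/(\rho_*-1),B(R)}$ (Lemma~\ref{l1-1}). This factor is bounded once $\kappa$ is small enough that $|A(R)|\ge c_{\mathrm{reg}}R^{d}/2$, by \eqref{a2-2-1}.
\item Since $\eta\ge\kappa R$ on $A(R)$, Cauchy--Schwarz with $\theta^{-1}$ gives $|(fg)_{B(R)}|\le cR^{-(m+d)/2}\Norm{\theta^{-1}}{1,B(R)}^{1/2}\norm{\eta^{m/2}f}{2}{\boldsymbol{V},\theta}$. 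Multiplying by $|B(R)|\asymp R^{d}$ gives exactly $R^{-m}$.
\item The mass of $f$ near $o$, where $\eta$ gives no gain, is then controlled by the energy through the Poincar\'e--Sobolev inequality rather than by the weight.
\end{itemize}

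\textbf{Your treatment of the regime $R_*<R_1$ is also wrong.} There $\norm{\eta^{m/2}f}{2}{\boldsymbol{V},\theta}^{2}<R_1^{m+2}\cE(f)$. The inequality you invoke, $R_1^{2}\cE(f)\le R_1^{m}\cE(f)^{m/(m+2)}\norm{\eta^{m/2}f}{2}{\boldsymbol{V},\theta}^{4/(m+2)}$, is equivalent to $\cE(f)\le R_1^{(m-2)(m+2)/2}\norm{\eta^{m/2}f}{2}{\boldsymbol{V},\theta}^{2}$, which fails when $\cE(f)$ is large. Use $\eta\ge1$ instead. For $R\le R_1$,
$\norm{f}{2}{\boldsymbol{V},\theta}^{2}\le\norm{\eta^{m/2}f}{2}{\boldsymbol{V},\theta}^{2}\le R_1^{m}R^{-m}\norm{\eta^{m/2}f}{2}{\boldsymbol{V},\theta}^{2}$.
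So the two-term bound holds with constant $cR_1^{m}\cM$ for every $R>0$, and you can optimise over all $R>0$; this is where the factor $R_1^{m}$ comes from.

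Finally, the energy bookkeeping you flagged closes exactly, with no need to adjust $d'$: with $\rho_*=2p/(p-1)$ one has $\frac{d}{p}+\frac{d(2-\rho)}{\rho}+2\bigl(1-\frac{d}{d'}\bigr)=2$.
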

In the context of random conductance models in ergodic environments the integrability condition \eqref{a1-1-1} can be easily verified  by using the ergodic theorem, provided a suitable moment condition is assumed. In fact, the condition on $p$ and $q$ in \eqref{a1-1-2} leads to the same moment condition under which various homogenization results have been established, see e.g.\ \cite{ADS15, ADS16,ADS16a,DNS18,AT21, ACS21, BCKW21}. Under the same assumptions we upgrade the Nash inequality in Theorem~\ref{thm:nash} to the following interpolated version.
\begin{theorem}[Interpolated anchored Nash inequality] \label{thm:nash_int}
  Suppose there exist $p, q\in (1, \infty]$ satisfying \eqref{a1-1-2} such that \eqref{a1-1-1} holds.
  \begin{enumerate}[(i)]
  \item
    Suppose that Assumption~\ref{a2-2} holds and $\inf_{y \in \boldsymbol{V}} \theta(y) > 0$. Then, for every $m \geq 2$, there exist $C_{2} \in (0, \infty)$, and $\alpha, \beta, \gamma \in (0, 1)$ with $\alpha + \beta + \gamma = 1$ such that for any $f\colon \boldsymbol{V} \to \bbR$,
    \begin{align}\label{p2-2-1}
      \norm{f}{2}{\boldsymbol{V}, \theta}^2
      \;\leq\;
      C_{\mathrm{AN}}\, \cM\, \cE(f)^{\alpha}\, \norm{f}{1}{\boldsymbol{V}, \theta}^{2\beta}\,
      \norm{\eta^{m/2} f}{2}{\boldsymbol{V}, \theta}^{\gamma},
    \end{align}
    where $C_{\mathrm{AN}} \ldef C_{2} R_{1}^{m}$ with $R_{1}$ being the same constant in Assumption~\ref{a2-2} (associated with some $d' > d$).

  \item
    Suppose that Assumption~\ref{a2-2} holds with $R_{0} = R_{1} = 1$ and $\alpha_{0} = 0$ (which means \eqref{a2-2-1} and \eqref{a2-2-2} hold for every $R \geq 1$, $x \in B(R)$ and $1 \leq r \leq R$). Then, for every $m \geq 2$, \eqref{p2-2-1} holds for every $f\colon \boldsymbol{V} \to \bbR$ (with $R_{1} = 1$).
  \end{enumerate}
\end{theorem}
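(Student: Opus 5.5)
We derive Theorem~\ref{thm:nash_int} from Theorem~\ref{thm:nash} by truncating $f$ at a level $\lambda>0$, applying \eqref{p2-1-1} to the truncated part, and optimizing over $\lambda$.

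\textbf{Decomposition.} By homogeneity of \eqref{p2-2-1} and since $\cE(|f|)\le\cE(f)$, we may take $f\ge0$. For $\lambda>0$ write $f=(f-\lambda)_{+}+(f\wedge\lambda)$. Then
\begin{align*}
  f^{2} \;\le\; 2(f-\lambda)_{+}^{2} + 2\lambda f,
\end{align*}
so that
\begin{align*}
  \norm{f}{2}{\boldsymbol{V},\theta}^{2}
  \;\le\; 2\norm{(f-\lambda)_{+}}{2}{\boldsymbol{V},\theta}^{2}
  + 2\lambda\norm{f}{1}{\boldsymbol{V},\theta}.
\end{align*}
Since $u\mapsto(u-\lambda)_{+}$ is a normal contraction, $\cE((f-\lambda)_{+})\le\cE(f)$. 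Moreover $(f-\lambda)_{+}\le f$, and $(f-\lambda)_{+}$ vanishes outside $\{f>\lambda\}$.

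\textbf{Controlling the truncated part.} We apply Theorem~\ref{thm:nash} to $g=(f-\lambda)_{+}$, but with a larger anchoring exponent $m'>m$ than the one appearing in \eqref{p2-2-1}. This gives
\begin{align*}
  \norm{g}{2}{\boldsymbol{V},\theta}^{2}
  \;\le\; C R_{1}^{m'}\cM\,\cE(f)^{m'/(m'+2)}\norm{\eta^{m'/2}g}{2}{\boldsymbol{V},\theta}^{4/(m'+2)}.
\end{align*}
The task is to bound $\norm{\eta^{m'/2}g}{2}{\boldsymbol{V},\theta}^{2}$ by a product of powers of $\norm{\eta^{m/2}f}{2}{\boldsymbol{V},\theta}$, $\norm{f}{1}{\boldsymbol{V},\theta}$ and $\lambda$. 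The trick is to use pointwise bounds on $g$ that exploit $\inf\theta>0$. In case (i), $\theta\ge\theta_{0}>0$ gives the pointwise bound $f(y)\le\norm{f}{1}{\boldsymbol{V},\theta}/\theta_{0}$ and $f(y)^{2}\theta(y)\le\norm{\eta^{m/2}f}{2}{\boldsymbol{V},\theta}^{2}\eta(y)^{-m}$. On $\{f>\lambda\}$ we also have
\begin{align*}
  \theta(y)\;\le\;\lambda^{-2}\norm{\eta^{m/2}f}{2}{\boldsymbol{V},\theta}^{2}\eta(y)^{-m}.
\end{align*}
Combining these, the support of $g$ lies where $\eta(y)^{m}\lesssim\lambda^{-2}\norm{\eta^{m/2}f}{2}{\boldsymbol{V},\theta}^{2}$, that is, inside a ball of radius
\begin{align*}
  R_{\lambda}\simeq\bigl(\lambda^{-2}\norm{\eta^{m/2}f}{2}{\boldsymbol{V},\theta}^{2}\bigr)^{1/m}.
\end{align*}
Hence $\norm{\eta^{m'/2}g}{2}{\boldsymbol{V},\theta}^{2}\le R_{\lambda}^{m'-m}\norm{\eta^{m/2}f}{2}{\boldsymbol{V},\theta}^{2}$.

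An alternative, and possibly cleaner, route is to avoid the change of exponent altogether. One keeps $m'=m$ and instead localizes: the Nash inequality on the ball $B(R_{\lambda})$, extracted from the proof of Theorem~\ref{thm:nash}, is combined with the $\ell^{1}$ bound. Either way, we obtain a bound of the form
\begin{align*}
  \norm{f}{2}{\boldsymbol{V},\theta}^{2}
  \;\lesssim\; \cM\,\cE(f)^{a}\,\norm{\eta^{m/2}f}{2}{\boldsymbol{V},\theta}^{b}\,\lambda^{-c}
  + \lambda\norm{f}{1}{\boldsymbol{V},\theta}
\end{align*}
with $a,b,c>0$ depending on $m$ and $m'$. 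The dependence of the constant on $\theta_{0}$ can be absorbed using $\cM$, since $\theta_{0}^{-1}$ is controlled by it.

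\textbf{Optimization and case (ii).} We choose $\lambda$ to balance the two terms. This yields \eqref{p2-2-1} with $\alpha,\beta,\gamma\in(0,1)$ determined by $a,b,c$, and $\alpha+\beta+\gamma=1$ is forced by a scaling check: replacing $f$ by $tf$ multiplies both sides of \eqref{p2-2-1} by $t^{2}$. In case (ii), Assumption~\ref{a2-2} holds at all scales. The same argument then works without restricting to large radii, and without the need to compare $R_{\lambda}$ with $R_{1}$.

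\textbf{Main obstacle.} The hardest step is the one where $R_{\lambda}$ may fall below $R_{1}$, or below $C_{0}R^{\alpha_{0}}$, where the Sobolev inequality \eqref{a2-2-2} is unavailable. In that regime I would use the crude bound $\norm{g}{2}{\boldsymbol{V},\theta}^{2}\le\sup g\cdot\norm{f}{1}{\boldsymbol{V},\theta}$ together with $|B(R_{1})|\lesssim R_{1}^{d}$. The resulting loss is absorbed into the factor $R_{1}^{m}$ in $C_{\mathrm{AN}}$, and this bookkeeping is where most of the care is required.
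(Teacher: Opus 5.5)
As a proof of the bare existence claim in (i), your truncation argument works. But it cannot deliver what the theorem is for, because the dimension never enters the $\ell^1$-term.

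\textbf{Dimension-free exponents in (i).} In your scheme $\|f\|_{\ell^1(\boldsymbol{V},\theta)}$ enters only through the pointwise bound $f\le \|f\|_{\ell^1(\boldsymbol{V},\theta)}/\theta_0$, and the exponents in \eqref{p2-1-1} are dimension-free. Carry out your optimisation with $c=4(m'-m)/(m(m'+2))$, taking $\gamma$ as the exponent of $\|\eta^{m/2}f\|^2$ as in the paper's proof. You get $\alpha=\frac{m'}{(m'+2)(1+c)}$, $\beta=\frac{c}{2(1+c)}$ and $\gamma=\frac{2m'}{m(m'+2)(1+c)}$. These satisfy $\beta+\frac{m+2}{2}\gamma=1$, which is \eqref{l2-1-1} with $d$ replaced by $0$.

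For the bare existence claim the truncation is in fact a detour. Interpolating geometrically between \eqref{p2-1-1} and $\|f\|^2_{\ell^2(\boldsymbol{V},\theta)}\le\theta_0^{-1}\|f\|^2_{\ell^1(\boldsymbol{V},\theta)}$ already suffices. It also gives the stated power $R_1^m$, whereas your route gives $R_1^{m'/(1+c)}$ with $m'/(1+c)>m$.

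What the paper needs is the relation \eqref{l2-1-1}: Proposition~\ref{p4-1} and Theorem~\ref{t1-1} take $\alpha,\beta,\gamma$ from Lemma~\ref{l2-1}. If you rerun the ODE argument of Proposition~\ref{p4-1} with your exponents, the self-consistent decay rate comes out as $t^{0}$, not $t^{-d/2}$.

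\textbf{The missing idea.} You need a second, mesoscopic scale $r\le R$:
\begin{itemize}
\item apply \eqref{a2-2-2} with H\"older on balls $B(x,r)\subset B(2R)$ to get \eqref{p2-2-3a};
\item control the $\theta$-mean via $\sum_{z\in B(x,r)}\theta(z)\gtrsim\cM^{-1}r^{d}(r/R)^{d}$, which follows from Cauchy--Schwarz and $\|\theta^{-1}\|_{1,B(2R)}$;
\item cover $B(R)$ by such balls with bounded overlap, summing the $\ell^1$-terms via $\sum_i a_i^2\le(\sum_i a_i)^2$.
\end{itemize}
This produces the genuine Nash term $r^{-d}(R/r)^{d}\|f\|^2_{\ell^1(\boldsymbol{V},\theta)}$ alongside $r^{2}(R/r)^{d/p+d/q}\cE(f)$ and $R^{-m}\|\eta^{m/2}f\|^2_{\ell^2(\boldsymbol{V},\theta)}$, i.e.\ \eqref{p2-2-7}. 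The two-parameter optimisation of Lemma~\ref{l2-1} with $b=b'=d$ is what puts $d$ into $\beta$ and $\gamma$. The hypothesis $\inf\theta>0$ is used only in the regime $r\le C_0R^{\alpha_0}$, where \eqref{a2-2-2} is unavailable.

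\textbf{Part (ii) is a genuine gap.} Part (ii) does \emph{not} assume $\inf\theta>0$, yet both your pointwise bound and your confinement of $\{f>\lambda\}$ to $B(R_\lambda)$ rely on it. From \eqref{a1-1-1} and volume regularity you only get $\theta(y)^{-1}\le\sum_{z\in B(\eta(y))}\theta(z)^{-1}\lesssim\cM\,\eta(y)^{d}$. Hence on $\{f>\lambda\}$ you only have $\eta(y)^{m-d}\lesssim\cM\lambda^{-2}\|\eta^{m/2}f\|^2_{\ell^2(\boldsymbol{V},\theta)}$. This confines nothing when $2\le m\le d$, so the bound $\|\eta^{m'/2}(f-\lambda)_+\|^2\le R_\lambda^{m'-m}\|\eta^{m/2}f\|^2$ is lost.

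The paper uses this crude bound on $\theta^{-1}$ only in the small-scale regime $r\le 2\le R$. There the factor $\sum_{B(R)}\theta^{-1}\lesssim\cM R^d$ is absorbed into the term $r^{-d}(R/r)^{d}\|f\|^2_{\ell^1(\boldsymbol{V},\theta)}$ already present in \eqref{p2-2-7}.

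For the same reason, your remark that $\theta_0^{-1}$ is controlled by $\cM$ is false: $\cM$ bounds only ball averages of $\theta^{-1}$. In (i) this is harmless, since $C_2$ may depend on $\inf\theta$, as it does in the paper.
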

In \cite{MO16} an (interpolated) anchored Nash inequality has been established for the counting measure on $\bbZ^{d}$, that is $\theta(x) = 1$ for all $x \in \boldsymbol{V} = \bbZ^{d}$, while in Theorems~\ref{thm:nash} and \ref{thm:nash_int} we obtain an anchored Nash inequality for a general class of reference measures $\theta$ and unbounded weights satisfying the integrability condition \eqref{a1-1-1}, on any graph $\boldsymbol{V}$ satisfying Assumption~\ref{a2-2}. This includes several degenerate models such as supercritical percolation clusters, see Subsection \ref{s1-4} below. This form of a Nash inequality allows us to deduce the following on-diagonal heat kernel upper bounds.
\begin{theorem}[Upper on-diagonal heat kernel bound]\label{t1-1}
  Let $d \geq 3$ and suppose that Assumption~\ref{a2-2} holds.
  \begin{enumerate}[(i)]
  \item
    Suppose there exist $m_{0} > d$ and $p \in \bigl(\max\{1,\frac{d}{m_{0}-2}\}, \infty\bigr]$, $q \in (1, \infty]$ satisfying \eqref{a1-1-2} such that
    \begin{align}\label{a1-4-1}
      \sup_{R \geq 1} \Norm{\theta}{p, B(R)}
      \,+\,
      \sup_{R \geq 1} \Norm{\mu_{m_{0}}}{p, B(R)}
      \,+\,
      \sup_{R \geq 1} \Norm{\nu}{q, B(R)}
      \;<\;
      \infty.
    \end{align}
    Then there exists $C_{3} \in (0, \infty)$ such that for every $t > 0$,
    \begin{align}\label{t1-1-1}
      p(t,o,o)
      \;\leq\;
      C_{3}\,  R_{0}^{(1/p + 1/q) m_{0}\gamma/(2\beta)}\, R_{1}^{m_{0}/\beta} 
      \cM_{0}^{(1 + m_{0}\gamma)/\beta}\,
      \biggl(
        1 + \frac{\mu_{m_{0}}(o)}{\theta(o)}
      \biggr)\, t^{-d/2},
    \end{align}
    where $R_{0}, R_{1} \geq 1$ are as in Assumption~\ref{a2-2} (for some $d' = d'(d, p, q) > d$),
    \begin{align}\label{t1-1-2}
      \cM_{0}
      \;\ldef\;
      \sup_{R \geq 1}\Bigl(
        \Bigl(
          \Norm{1 \vee \mu_{m_{0}}}{p, B(R)}
          \,+\, \Norm{1 \vee \theta}{p, B(R)}
        \Bigr) \cdot \Norm{1 \vee \nu}{q, B(R)}
      \Bigr),
    \end{align}
    and $\alpha, \beta, \gamma \in (0,1)$ are the same constants as in Lemma~\ref{l2-1} below associated with $m = m_{0}$.

  \item
    In the nearest neighbour case, that is when $W(x, y) \neq 0$ only if $x \sim y$, suppose there exist $p, q \in (1, \infty]$ satisfying \eqref{a1-1-2} such that the integrability condition \eqref{a1-4-1} holds (note that $\mu_{m_{0}}(x)=\mu(x)$ for any $m_{0}$ in the nearest neighbour case). Then, for every $m_{0} > d$,
    \begin{align}\label{t1-1-1a}
      p(t,o,o)
      \;\leq\;
      C_{3} \,
      R_{0}^{(1/p + 1/q) m_{0}\gamma/(2\beta)}\, R_{1}^{m_{0}/\beta}\,
      \cM_{0}^{(1 + m_{0}\gamma)/\beta}\,
      \biggl(
        1 + \frac{\mu(o)}{\theta(o)}
      \biggr)\, t^{-d/2}.
    \end{align}
  \end{enumerate}
\end{theorem}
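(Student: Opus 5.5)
We follow the Mourrat--Otto strategy: combine the interpolated anchored Nash inequality (Theorem~\ref{thm:nash_int}, in the form of Lemma~\ref{l2-1} with $m = m_{0}$) with a differential inequality for the moment $\norm{\eta^{m_{0}/2} u_t}{2}{\boldsymbol{V},\theta}^2$.

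\textbf{Setup.} Set $u_t(x) \ldef p(t,o,x)$. Then $\norm{u_t}{1}{\boldsymbol{V},\theta} \le 1$ and $\norm{u_t}{2}{\boldsymbol{V},\theta}^2 = p(2t,o,o)$. We also have
\begin{align*}
  \tfrac{d}{dt}\,\norm{u_t}{2}{\boldsymbol{V},\theta}^2 \;=\; -2\,\cE(u_t).
\end{align*}
Since $\theta$ need not be bounded below, part~(i) of Theorem~\ref{thm:nash_int} is not directly available. We therefore first reduce to $\theta \ge 1$ by comparing with the walk of speed measure $1\vee\theta$: time changes preserve the Dirichlet form, and the change of $\theta$ on a set of small density is absorbed into $\cM_{0}$. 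Alternatively, we rederive Lemma~\ref{l2-1} with $\norm{\theta^{-1}}{1}{}$ replaced via $\mu_{m_0}$-integrability. This is presumably the origin of the factor $R_0^{(1/p+1/q)m_0\gamma/(2\beta)}$. In the nearest-neighbour case (ii), $\mu_{m_0} = \mu$ for every $m_0$, so the constraint $p > d/(m_0-2)$ disappears by choosing $m_0$ large. Otherwise the argument is the same.

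\textbf{Step 1 (moment growth).} Let $M(t) \ldef \norm{\eta^{m_0/2}u_t}{2}{\boldsymbol{V},\theta}^2$. Differentiating gives
\begin{align*}
  M'(t) \;=\; -\sum_{x,y} W(x,y)\bigl(u(y)-u(x)\bigr)\bigl(\eta^{m_0}(y)u(y)-\eta^{m_0}(x)u(x)\bigr).
\end{align*}
Write $\eta^{m_0}(y)u(y)-\eta^{m_0}(x)u(x) = \eta^{m_0}(x)\bigl(u(y)-u(x)\bigr) + u(y)\bigl(\eta^{m_0}(y)-\eta^{m_0}(x)\bigr)$. Use $|\eta^{m_0}(y)-\eta^{m_0}(x)| \le C\, d(x,y)\bigl(\eta(x)^{m_0-1} + d(x,y)^{m_0-1}\bigr)$ and Cauchy--Schwarz, absorbing the symmetric gradient part into the dissipation. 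This yields
\begin{align*}
  M'(t) \;\le\; C\sum_{x} u_t(x)^2\bigl(\eta(x)^{m_0-2}\mu_{2}(x) + \mu_{m_0}(x)\bigr).
\end{align*}
Here the long-range tails are controlled by $\mu_{m_0}$. To bound the right-hand side, split space into dyadic shells $B(2^{k})\setminus B(2^{k-1})$. On each shell apply H\"older with exponent $p$ to $\mu_{m_0}/\theta$ (using the bound \eqref{a1-4-1} on balls) together with $u_t \le \sup_x p(t,o,x)\,\theta$-type estimates. I expect to obtain
\begin{align*}
  M'(t) \;\le\; C\,\cM_{0}\,\Bigl(1+\tfrac{\mu_{m_0}(o)}{\theta(o)}\Bigr)\,t^{m_0/2-1}.
\end{align*}
The condition $p > d/(m_0-2)$ is exactly what makes the shell sum converge. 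The factor $\mu_{m_0}(o)/\theta(o)$ comes from the initial time layer, where $u_t$ is concentrated at $o$ with mass $1/\theta(o)$. This step is the main obstacle: it requires a self-consistent bootstrap. Assume $\sup_{s\le t} s^{d/2}p(s,o,o) \le A$ and use $u_s(x)^2 \le p(2s,o,o)\,p(2s,x,x)$-type control only through $\ell^2$ mass, then close the argument via the Nash step below.

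\textbf{Step 2 (ODE closure).} Insert $\norm{u_t}{1}{\boldsymbol{V},\theta}\le 1$ and $M(t) \lesssim \cM_0(1+\mu_{m_0}(o)/\theta(o))\,t^{m_0/2}$ into \eqref{p2-2-1}. This gives
\begin{align*}
  \cE(u_t) \;\ge\; c\,\bigl(\cM\,C_{\mathrm{AN}}\bigr)^{-1/\alpha}\,\|u_t\|^{2/\alpha}\,M(t)^{-\gamma/\alpha}.
\end{align*}
With $v(t) = \norm{u_t}{2}{}^2$ we then have $v' \le -c\,v^{1/\alpha}t^{-m_0\gamma/(2\alpha)}K^{-1}$. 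Integrating,
\begin{align*}
  v(t) \;\lesssim\; K^{\alpha/(1-\alpha)}\,t^{-(1-\alpha - m_0\gamma/2)/(1-\alpha)}.
\end{align*}
The scaling relation of Lemma~\ref{l2-1} (from $\alpha+\beta+\gamma=1$ and its dimensional balance) is precisely what should make this exponent equal to $d/2$. Tracking constants gives the powers $R_1^{m_0/\beta}$ and $\cM_0^{(1+m_0\gamma)/\beta}$, and closes the bootstrap.
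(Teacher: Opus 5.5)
Your proposal has a genuine gap at the very first step, and Steps 1--2 do not close as written.

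\textbf{The reduction is missing its key idea.} The hypothesis \eqref{a1-4-1} gives no control at all on $\theta^{-1}$: neither $\inf\theta>0$ nor $\sup_R\Norm{\theta^{-1}}{1,B(R)}<\infty$ is assumed. It also gives no pointwise comparison between $\mu_2$, $\mu_{m_0}$ and $\theta$. Hence, for $X$ itself, neither Theorem~\ref{thm:nash_int} nor a closed differential inequality for $M(t)$ is available. Bounding $\sum_x u_t(x)^2\eta^{m_0-2}(x)\mu_2(x)$ by $\theta$-weighted quantities needs $\mu_2\le K_1\theta$ pointwise, as in \eqref{l2-2-1}. Passing to speed measure $1\vee\theta$ repairs the Nash input but not this, since the conductances may be unbounded.

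The paper instead time-changes to $\pi_{m_0}=\max\{1,\mu_{m_0},\theta\}$. Then $\mu_2\le\mu_{m_0}\le\pi_{m_0}$ gives \eqref{l2-2-1} with $K_1=1$, and $\pi_{m_0}\ge 1$ gives \eqref{a1-1-1} and Theorem~\ref{thm:nash_int}(i). Lemma~\ref{l4-1} supplies \eqref{l2-2-2} with $K_2\propto R_0^{1/p+1/q}$; this, not a modified Lemma~\ref{l2-1}, is where the power of $R_0$ comes from. Proposition~\ref{p4-1} then bounds $q(t,o,o)$ for the time-changed walk $Y$.

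You also need a way back to $X$, and \emph{absorbed into $\cM_0$} is not a mechanism, since heat kernels of time changes are not directly comparable. The paper argues as follows:
\begin{itemize}
\item By monotonicity of $t\mapsto p(t,o,o)$, $p(2t,o,o)\le t^{-1}\int_t^{2t}p(s,o,o)\,\mathrm{d}s$.
\item It writes $X_s=Y_{A_s^{-1}}$ and uses $A_s^{-1}\ge s$ (from $\theta\le\pi_{m_0}$).
\item This gives the bound $\frac{\pi_{m_0}(o)}{t\theta(o)}\int_t^\infty q(r,o,o)\,\mathrm{d}r$.
\end{itemize}
That integral is $\lesssim t^{1-d/2}$ only because $d\ge 3$. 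Your argument never uses $d\ge 3$, which signals that this step is absent. In the paper this step, not an initial time layer, is also where the factor $1+\mu_{m_0}(o)/\theta(o)$ originates.

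\textbf{Steps 1--2 do not close.} Your expected bound $M(t)\lesssim t^{m_0/2}$ has the wrong scaling: a density of height $t^{-d/2}$ spread over $B(\sqrt t)$ gives $M(t)\approx t^{(m_0-d)/2}$. Insert your bound into the Nash step and use $\tfrac d2\beta+\tfrac{m_0}{2}\gamma=\alpha$ (from \eqref{l2-1-1}). You then get only $\norm{u_t}{2}{\boldsymbol{V},\theta}^2\lesssim t^{-d\beta/(2(1-\alpha))}$, strictly slower than $t^{-d/2}$ since $\beta<1-\alpha$. The exponent you wrote is not $d/2$ either; it is less than $1$.

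Moreover, a moment bound that does not involve the bootstrap quantity cannot come from $\ell^1$/$\ell^2$ information on $u_t$ alone. Your dyadic-shell/H\"older sketch would need pointwise control of $\mu_{m_0}/\theta$ or of $u_t$, which you do not have.

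What works is Lemma~\ref{l2-2} and the proof of Proposition~\ref{p4-1}. The inequality $V_t'\le C(K_1+K_2)V_t^{1-2/m_0}U_t^{2/m_0}$ integrates to $V_t\lesssim \Lambda_t\, t^{(m_0-d)/2}$ for $t\ge 1/4$, where $\Lambda_t=\max\{1,\sup_{s\le t}s^{d/2}U_s\}$. Feeding this into \eqref{p2-2-1} gives $\Lambda_t\lesssim(\cdots)\Lambda_t^{\gamma/(1-\alpha)}$. Solving produces the $1/\beta$ exponents in \eqref{t1-1-1}.

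You should also work with the Dirichlet kernel $p_{B(R)}(t,o,\cdot)$, so that $V_t$ is finite and differentiable, and then let $R\to\infty$.

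Finally, in (ii) the constraint on $p$ disappears not by taking $m_0$ large. It disappears because the sum in \eqref{l2-2-2} is empty for nearest-neighbour weights, which is why the bound holds for every $m_0>d$.
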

Next we give a detailed discussion of our result in the following remark.
\begin{remark}
  (i) As mentioned above, in \cite{MO16} a uniform upper bound for weight $W$ is required in order to obtain an on-diagonal upper bound for nearest neighbour random walks via the corresponding (interpolated) anchored Nash inequality with respect to the counting measure. In Theorem~\ref{t1-1} this restriction is removed and the on-diagonal heat kernel bound holds, again for any reference measure $\theta$, only assuming the integrability condition \eqref{a1-4-1}. Moreover, compared with \cite{MO16}, our result also includes random walk with long-range jumps whose jump kernel is $L^{2}$-integrable, where the effect of the long-range jumps is controlled by the integrability of $\mu_{m_{0}}$ (defined in \eqref{def:mu_nu}).

  \smallskip 
  (ii) In order to derive the on-diagonal heat kernel upper bounds from the interpolated anchored Nash inequality,we follow the argument in \cite{MO16}. However, some additional challenges appear due to the presence of long-range jumps, and their interactions with general reference measures. In fact, we first introduce conditions \eqref{l2-2-1} and \eqref{l2-2-2} to control the effect of long-range jumps, which may fail for general reference measures without some uniform ellipticity assumptions. To resolve this issue, we apply Proposition~\ref{p4-1} not on the original walk $X$ but on a time-change of $X$ with speed measure $\pi_{m_{0}} = \max\{1, \mu_{m_{0}}, \theta\}$. Then, in transient dimensions, the resulting heat kernel bound for the time-changed process can be transferred to the original random walk $X$ by controlling the additive functional governing the time-change. \smallskip

  (iii) For the special case of a variable speed random walk on $\mathbb{Z}^{d}$ whose reference measure is the counting measure, an on-diagonal upper heat kernel estimate has been shown in \cite{CKW24} by using a maximal inequality with tail terms. However, it seems that the methods in \cite{CKW24} cannot be easily applied to random walks with long-range jumps having a general reference measure.
  \smallskip

  (iv) For nearest neighbour random walks on weighted, locally finite graphs, two-sided Gaussian heat kernel estimates have been proven by Delmotte \cite{De99} using discrete Moser iteration schemes. In \cite{Fo11}, off-diagonal Gaussian heat kernel upper bounds have been established for elliptic nearest neighbour random walk with arbitrary speed measure. Those bounds are obtained following an approach originating from \cite{Gr97}. Suitable on-diagonal estimates, which are the required input for this technique, can be extracted e.g.\ from the results in \cite{MO16}, even in the case without uniform ellipticity condition. Another useful technique  to prove off-diagonal Gaussian type upper bounds is known as Davies' method (see e.g.~\cite{Da89,Da93, CKS87}). It has also been successfully implemented in \cite{ADS16a,ADS19} to show Gaussian-type upper bounds for nearest neighbour random walk under similar integrability conditions as in the present work.  Note that the integrability conditions on $\omega(e)$ and $1/\omega(e)$ in this paper are necessary. Indeed, Gaussian bounds do not hold in the case of i.i.d.\ conductances with fat tails at zero due to a trapping phenomenon, see \cite{BBHK08}. But it seems that the aforementioned methods are not easily applicable to long-range random walks due to presence of long-range jumps with various lengths, see e.g.\ the analysis in \cite{BCKW21}.
  \smallskip
  
  (v) Unfortunately, the bounds in Theorem~\ref{t1-1} are not suitable to derive effective near-diagonal bounds of the form $p(t,o,x) \lesssim t^{-d/2}$, $x \in \boldsymbol{V}$, from them. This is mainly due to the fact that the constant in the right hand side of \eqref{t1-1-2} depends on the fixed point $o \in \boldsymbol{V}$. The lack of such near-diagonal bounds is also what hinders us to extend the bounds in Theorem~\ref{t1-1} to dimension $d = 2$. On the other hand, in situations where a classical Nash inequality of the form
  \begin{align}\label{e1-5}
    \norm{f}{2}{\boldsymbol{V}, \theta}^{2 + 4/d}
    \;\leq\;
    c\, \cE(f)\,
    \norm{f}{1}{\boldsymbol{V}, \theta}^{4/d},
  \end{align}
  is available, one can derive such bounds from it by a duality argument, see e.g.~\cite{CKS87}.
  
  One advantage of the bounds in Theorem~\ref{t1-1} is that, in the case where the weights are given by random ergodic conductances on $\bbZ^{d}$ satisfying a suitable explicit moment conditions, one can derive effective \emph{annealed} near diagonal bounds of the form $\mean\bigl[p^{\omega}(t,0, x)\bigr] \lesssim t^{-d/2}$ for all $x \in \bbZ^{d}$, see Theorem~\ref{t1-3} below. In fact, the approach based on Nash-type inequality leads to the constants $\cM$ and $\cM_{0}$. In the random setting the expectation of those can be easily controlled by the maximal ergodic theorem under an explicit moment condition. This is in contrast to e.g.\ the bounds obtained by Davies' perturbation techniques in \cite{ADS16a, ADS19}, where the analogue of those constants are much more intrinsic and a corresponding annealed results requires far more complicated moment conditions, cf.~\cite{AT21}.
  \smallskip
  
  (vi) For degenerate models, such as supercritical percolation clusters, one cannot expect the classical Nash inequality \eqref{e1-5} or Sobolev inequality to hold. Instead, in \cite{Ba04} regularity conditions based on volume doubling on mesoscopic scales and a local weak Poincar\'e inequality has been established in order to prove Gaussian heat kernel bounds for simple random walk on supercritical Bernoulli percolation clusters. This has been extended to more general degenerate models in \cite{BC16,Sa17}. As described in Subsection~\ref{s1-4} below, we will establish an interpolated anchored Nash inequality on a class of supercritical percolation clusters.  To our knowledge, this is the first global version of a functional inequality on those degenerate models in the literature. 
\end{remark}

\subsection{Application to random conductance models in ergodic environments}
As an application of our results we now present upper on-diagonal heat kernel estimates for random walks with long-range jumps among random ergodic conductances satisfying suitable moment conditions. More precisely, for $d \geq 2$ we consider the graph $(\boldsymbol{V}, \boldsymbol{E})$ with $\boldsymbol{V} = \bbZ^{d}$ and $\boldsymbol{E} = E_{d}$ being the collection all nearest neighbour bonds, that is, $E_{d} = \{\{x, y\} : x, y \in \bbZ^{d} \text{ with } |x-y|_{1} = 1\}$, where $|x|_{1} \ldef \sum_{i=1}^{d}|x_{i}|$ for every $x = (x_{1}, \cdots, x_{d}) \in \bbZ^{d}$. We still write $d(x,y)$ for the graph distance on $\bbZ^{d}$. Then, Assumption~\ref{a2-2} holds, cf.\ Remark~\ref{rem:graph:Zd} above.

Setting $\boldsymbol{\bar{E}} = \{\{x,y\} : x, y \in \bbZ^{d},\, x \ne y\}$, we place upon the graph positive weights $\omega = \{\omega(e) \in (0, \infty) : e \in \boldsymbol{\bar{E}} \}$, and for any $m \geq 0$ we define two measures on $\mathbb{Z}^{d}$,
\begin{align*}
  \mu^{\omega}_m(x)
  \;\ldef\;
  \sum_{y \in \bbZ^{d}} \omega(x,y)\, d(x, y)^{m},
  \qquad
  \nu^{\omega}(x)
  \;\ldef\;
  \sum_{y \sim x}\frac{1}{\omega(x,y)}.
\end{align*}
Let $(\Omega, \mathcal{F}) \ldef \bigl((0, \infty)^{\boldsymbol{\bar{E}}}, \mathcal{B}((0, \infty))^{\otimes \boldsymbol{\bar{E}}}\bigr)$ be the measurable space of all possible environments, i.e.\ all configurations of positive random weights. We denote by $\prob$ an arbitrary probability measure on $(\Omega, \mathcal{F})$ and $\mean$ the associated expectation. The measure space $(\Omega, \cF)$ is naturally equipped with a group of space shifts $\big\{\tau_{z} : z \in \bbZ^{d}\big\}$, which act on $\Omega$ as
\begin{align} \label{eq:def:space_shift}
  (\tau_z \omega)(x, y)
  \;\ldef\;
  \omega(\{x+z, y+z\}),
  \qquad \forall\, \{x, y\} \in \boldsymbol{\bar{E}}.
\end{align}
Then note that the measures $\mu^{\omega}_{m}$ and $\nu^{\omega}$ are stationary in the sense that $\mu_{m}^{\omega}(x+y) = \mu_{m}^{\tau_{y} \omega}(x)$ and $\nu^{\omega}(x + y) = \nu^{\tau_{y}\omega}(x)$ for all $x, y \in \bbZ^{d}$ and any $\omega \in \Omega$. Let $\theta^{\omega}\colon \bbZ^{d} \to (0, \infty)$ be a positive function which may depend on the environment $\omega \in \Omega$. For a fixed $\omega \in \Omega$, we denote by $X^{\omega} = (X_{t}^{\omega})_{t \geq 0}$ and $\{p^{\omega}(t, x, y)\}_{t > 0, x, y \in \bbZ^{d}}$ the Hunt process and associated heat kernel as constructed above for the choice $W(x,y) = W(y, x) = \omega(\{x, y\})$ and $\theta(x) \ldef \theta^{\omega}(x)$, $x, y \in \bbZ^{d}$. This means that the infinitesimal generator $\cL^{\omega}$ for $(X_{t}^{\omega})_{t \geq 0}$ acts on $f \in C_{c}(\bbZ^{d})$ as
\begin{align*}
  (\mathcal{L}^{\omega} f)(x)
  \:\ldef\;
  \frac{1}{\theta^{\omega}(x)}\, \sum_{y \sim x}\, \omega(\{x,y\})
  \bigl( f(y) - f(x)\bigr),
\end{align*}
and $X^{\omega}$ is reversible with respect to $\theta^{\omega}$. We call this process the \emph{random conductance model (RCM)} with \emph{speed measure} $\theta^{\omega}$. We denote $\Prob_{\!x}^{\omega}$ the law of this process started at $x \in \mathbb{Z}^{d}$ and $\Mean_{x}^{\omega}$ the corresponding expectation. There are two natural laws on the path space that are considered in the literature -- the quenched law $\Prob_{\!x}^{\omega}[\,\cdot\,]$ which concerns $\prob$-almost sure phenomena, and the annealed law $\mean\bigl[\Prob_{\!x}^{\omega}[\,\cdot\,]\bigr]$.
\begin{assumption}\label{ass:rcm}
  \begin{enumerate}[(i)]
  \item
    $\prob$ is stationary and ergodic with respect to spatial translations of $\mathbb{Z}^{d}$, that is, $\prob \circ \tau_x^{-1} = \prob$ for all $x \in \mathbb{Z}^{d}$ and $\prob[A] \in \{0,1\}$ for any $A \in \mathcal{F}$ such that $\tau_x(A) = A$ for all $x \in \mathbb{Z}^{d}$.

  \item
    $\theta^{\omega}$ is stationary, that is, $\theta^{\omega}(x + y) = \theta^{\tau_{y}\omega}(x)$ for all $x, y \in \bbZ^{d}$ and $\prob$-a.e.\ $\omega \in \Omega$.
  \end{enumerate}
\end{assumption}
As an immediate consequence of Theorem~\ref{t1-1} we get the following quenched heat kernel bound
\begin{theorem}[Quenched  heat kernel estimate] \label{thm:hke_quenched}
  Let $d \geq 3$ and suppose there exist $m_{0} > d$, $p \in \big(\max\{1, \frac{d}{m_{0}-2}\}, \infty\big]$ and $q \in (1, \infty]$ satisfying \eqref{a1-1-2} such that $\theta^{\omega}(0), \mu^{\omega}_{m_{0}}(0) \in L^{p}(\prob)$ and $\nu^{\omega}(0) \in L^{q}(\prob)$. Then, there exist random constant $\cM_{1}^{\omega}$ and non-random constant $C_{4} \in (0, \infty)$ such that for every $\omega \in \Omega$ and $t > 0$,
  \begin{align}\label{t1-3-1a}
    p^{\omega}(t,0,0)
    \;\leq\;
    C_{4}\, \cM_{1}^{\omega}\, t^{-d/2},
  \end{align}
  where $\cM_{1}^{\omega}$ is $\prob$-a.s.\ finite and explicity given by
  \begin{align}\label{t1-3-1aa}
    \cM_{1}^{\omega}
    \;\ldef\;
    \bigl(\cM_{0}^{\omega}\bigr)^{(1+m_{0}\gamma)/\beta}
    \bigg(1 + \frac{\mu_{m_{0}}^{\omega}(0)}{\theta^{\omega}(0)}\bigg)
  \end{align}
  with
  \begin{align}\label{t1-3-2}
    \cM_{0}^{\omega}
    \;\ldef\;
    \sup_{R \geq 1}\Bigl(
      \Bigl(
        \Norm{1 \vee \mu_{m_{0}}^{\omega}}{p, B(R)}
        + \Norm{1 \vee \theta^{\omega}}{p, B(R)}
      \Bigr) \cdot \Norm{1 \vee \nu^{\omega}}{q, B(R)}
    \Bigr),
  \end{align}
  and $\alpha, \beta, \gamma \in (0,1)$ are the same constants as in Lemma~\ref{l2-1} below associated with $m = m_{0}$.
\end{theorem}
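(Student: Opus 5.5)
The plan is to apply Theorem~\ref{t1-1}(i) pathwise, with $\boldsymbol{V} = \bbZ^{d}$, $o = 0$, $W(x,y) = \omega(\{x,y\})$ and $\theta = \theta^{\omega}$. By Remark~\ref{rem:graph:Zd}, Assumption~\ref{a2-2} holds with $R_{0} = R_{1} = 1$ and $\alpha_{0} = 0$ for every $d' > d$. The constants $C_{\mathrm{S}}$, $C_{0}$ and $\delta_{\mathrm{S}}$ do not depend on $\omega$, so the constant $C_{3}$ in \eqref{t1-1-1} is non-random. Since $R_{0} = R_{1} = 1$, the prefactors $R_{0}^{(1/p+1/q)m_{0}\gamma/(2\beta)}R_{1}^{m_{0}/\beta}$ equal $1$. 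The bound \eqref{t1-1-1} then reads exactly $p^{\omega}(t,0,0) \le C_{3}\,\cM_{1}^{\omega}\,t^{-d/2}$ with $\cM_{1}^{\omega}$ as in \eqref{t1-3-1aa}. We therefore take $C_{4} \ldef C_{3}$.

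It remains to check, for $\prob$-a.e.\ $\omega$, the integrability condition \eqref{a1-4-1}, and to show that $\cM_{1}^{\omega} < \infty$. For every $\omega$ where the suprema are infinite, we set $\cM_{1}^{\omega} = \infty$, which makes the inequality trivial. This is consistent with \eqref{t1-3-1aa}, because $\cM_{0}^{\omega}$ dominates the suprema in \eqref{a1-4-1}: each factor $\Norm{1\vee\cdot}{}$ is at least $1$.

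The key step is a multiparameter ergodic theorem over Euclidean balls. Under Assumption~\ref{ass:rcm}, the fields $\theta^{\omega}(x)^{p}$, $\mu_{m_{0}}^{\omega}(x)^{p}$ and $\nu^{\omega}(x)^{q}$ are stationary and integrable. Hence, for instance,
\begin{align*}
  \frac{1}{|B(R)|}\sum_{x \in B(R)} \bigl(1 \vee \theta^{\omega}(x)\bigr)^{p}
  \;\longrightarrow\;
  \mean\bigl[(1 \vee \theta^{\omega}(0))^{p}\bigr]
  \qquad \prob\text{-a.s.\ as } R \to \infty,
\end{align*}
along the averaging family of graph-distance balls $B(R)$, which are $\ell^{1}$-balls and so form a regular family. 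Each sequence $R \mapsto \Norm{1\vee\theta^{\omega}}{p,B(R)}$ is thus convergent, and each term is finite because $B(R)$ is finite and all weights are finite. A convergent sequence of finite terms is bounded, so the supremum over $R \ge 1$ is finite a.s. The same applies to $\mu_{m_{0}}^{\omega}$ and $\nu^{\omega}$. This gives \eqref{a1-4-1} and the a.s.\ finiteness of $\cM_{0}^{\omega}$. The factor $\mu_{m_{0}}^{\omega}(0)/\theta^{\omega}(0)$ is finite a.s.\ because $\mu_{m_{0}}^{\omega}(0) \in L^{p}(\prob)$ and $\theta^{\omega}(0) > 0$.

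The hypotheses $d \ge 3$, $p > \max\{1, d/(m_{0}-2)\}$ and \eqref{a1-1-2} are exactly those of Theorem~\ref{t1-1}(i). The main (mild) obstacle is measurability and ergodicity bookkeeping. The supremum over $R$ may be taken over integers $R$, since $B(R)$ depends only on $\lfloor R \rfloor$, which gives measurability of $\cM_{1}^{\omega}$. Also, only stationarity is needed for a.s.\ convergence of the averages; ergodicity merely identifies the limit as a constant. For a stronger quantitative statement, for example the annealed one in the later theorem, one would instead use the maximal ergodic theorem, but that is not needed here.
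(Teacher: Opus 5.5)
Your proposal is correct and matches the paper's proof. You apply Theorem~\ref{t1-1}(i) on $\bbZ^{d}$, where Assumption~\ref{a2-2} holds with $R_{0}=R_{1}=1$, so the $R_{0},R_{1}$-prefactors drop out and $C_{3}$ is deterministic, and you get the a.s.\ finiteness of $\cM_{0}^{\omega}$, hence \eqref{a1-4-1}, from ergodic theory. The only difference is minor: the paper cites the maximal ergodic theorem, while you use pointwise convergence of the averages over the $\ell^{1}$-balls $B(R)$ (a convergent sequence of finite terms is bounded), which is equally sufficient here; the maximal version only becomes essential for the integrability needed in Theorem~\ref{t1-3}.
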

\begin{proof}
  Since we have assumed that $\theta^{\omega}(0), \mu^{\omega}_{m_{0}}(0) \in L^{p}(\prob)$ and $\nu^{\omega}(0) \in L^{q}(\prob)$, an application of the maximal ergodic theorem gives that $\cM_{0}^{\omega}$ defined by \eqref{t1-3-2} is $\prob$-a.s.\ finite.

  As explained in Remark~\ref{rem:graph:Zd} above, Assumption~\ref{a2-2} holds with $R_{0} = R_{1} = 1$ and $\alpha_{0} = 0$. Hence by (the proof of) Theorem \ref{thm:nash_int} and Theorem~\ref{t1-1}, we know that the positive constants $C_{2}$, $C_{3}$ in \eqref{p2-2-1} and \eqref{t1-1-1} are non-random, so the claim is immediate from Theorem~\ref{t1-1} (with associated constant $C_{4}$ being non-random).
\end{proof}
For the VSRW, that this $\theta^{\omega}=1$, under stronger conditions on the environment, near-diagonal upper bounds on the parabolic scale have been show in \cite{CKW24}. More precisely, assuming that, for some $m_{0} \geq d+2$, $\mu^{\omega}_{2}(0), \mu_{m_{0}}^{\omega}(0) \in L^{p}(\prob)$ and $\nu^{\omega}(0) \in L^{q}(\prob)$ for $p, q \in (1, \infty]$ satisfying $\left(1-\frac{1}{d}\right)\cdot \frac{1}{p} + \frac{1}{q} \leq \frac{1}{d}$, we have $p^{\omega}(t, 0, x) \lesssim t^{-d/2}$ for all $t$ larger than some (non-explicit) random constant and all $x$ in ball centred at zero with radius of order $\sqrt{t}$, see \cite[Proposition~3.7]{CKW24}. Besides the fact that the on-diagonal bound in Theorem~\ref{thm:hke_quenched} holds for general speed measures under weaker moment assumptions, another advantage of Theorem~\ref{thm:hke_quenched} in comparison with the result of \cite{CKW24} is that, as already mentioned above, the random constant $\cM_{1}^{\omega}$ is explicit, which allows to derive the following annealed heat kernel upper bound.
\begin{theorem}[Annealed heat kernel bound]\label{t1-3}
  Let $m_{0} > d$, $p \in \big(\max\{1,\frac{d}{m_{0}-2}\}, \infty\big]$ and $q \in (1,\infty]$ be satisfying \eqref{a1-1-2}.
  \begin{enumerate}[(i)]
  \item
    For $d \geq 3$ and suppose there exist $p_{1}, p_{2} \in (1, \infty]$ and $q_{1}, q_{2} \in (1, \infty]$ satisfying
    \begin{align}\label{t1-3-2a}
      \frac{1}{p_1} + \frac{1}{p_2} \;=\; 1,
      \qquad
      \frac{1}{q_1} + \frac{1}{q_2} \;=\; 1,
    \end{align}
    such that
    \begin{align}\label{t1-3-3}
      \theta^{\omega}(0), \mu^{\omega}_{m_{0}}(0) \in L^{\tilde p}(\prob),
      \qquad
      \nu^{\omega}(0) \in L^{\tilde q}(\prob),
      \qquad
      \frac{\mu^{\omega}_{m_{0}}(0)}{\theta^{\omega}(0)}\in L^{q_2}(\prob),
    \end{align}
    where
    \begin{align} \label{eq:defTildep0}
      \tilde p\ldef\max\Big\{p,p_1q_1\Big(1+\frac{1+m_{0}\gamma}{\beta}\Big)\Big\},\quad \tilde q \ldef \max\Big\{q,p_2q_1\Big(1+\frac{1+m_{0}\gamma}{\beta}\Big)\Big\}.
    \end{align}
    Then there exists a constant $C_5\in (0,\infty)$ such that
    \begin{align}\label{t1-3-4}
      \mean\bigl[p^{\omega}(t, x, y) \bigr]
      \;\leq\;
      C_{5}\, t^{-d/2},
      \qquad  \forall\, x, y \in \bbZ^{d}, t > 0.
    \end{align}
    
  \item
    Let $d=2$ and  suppose there exist $p_1,p_2\in (1, \infty]$ and $q_1,q_2\in (1, \infty]$ satisfying \eqref{t1-3-2a}
    such that
    \begin{align}
      \label{t1-3-3a}
      \theta^{\omega}(0), \mu^{\omega}_{m_{0}}(0) \in L^{2\tilde{p}}(\prob),
      \quad
      \nu^{\omega}(0) \in L^{2\tilde{q}}(\prob),
      \quad
      \frac{\mu^{\omega}_{m_{0}}(0)}{\theta^{\omega}(0)} \in L^{2q_2}(\prob),
    \end{align}
    and
    \begin{align} \label{t1-3-3a1}
      \frac{
        \bigl(
          1 \vee \theta^{\omega}(0) \vee \mu^{\omega}_{m_{0}}(0)
        \bigr)^2}{\theta^{\omega}(0)} \in L^{1}(\prob)
    \end{align}
    with $\tilde{p}$ and $\tilde{q}$ as defined in \eqref{eq:defTildep0}. Then there exists $C_{6} \in (0, \infty)$ such that
    \begin{align}\label{t1-3-4a}
      \mean\bigl[p^{\omega}(t, x, y)\bigr]
      \;\leq\;
      C_{6}\,
      \Bigl(
        1 + \log t\,
        \indicator_{\{t \geq 2\}}
      \Bigr)\, t^{-1},
      \qquad \forall\, x,y \in \bbZ^{d}, t > 0.
    \end{align}
  \end{enumerate}
\end{theorem}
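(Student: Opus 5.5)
The idea is to derive the annealed bound from the explicit quenched bound of Theorem~\ref{thm:hke_quenched}, which is anchored at the origin, together with stationarity and Cauchy--Schwarz. This handles the off-diagonal pair $(x,y)$.

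\emph{Step 1: reduction to the diagonal.} By the semigroup property and symmetry of $p^{\omega}$ with respect to $\theta^{\omega}$,
\begin{align*}
  p^{\omega}(t,x,y)
  \;=\; \sum_{z} p^{\omega}(t/2,x,z)\,p^{\omega}(t/2,z,y)\,\theta^{\omega}(z)
  \;\leq\; p^{\omega}(t,x,x)^{1/2}\, p^{\omega}(t,y,y)^{1/2}.
\end{align*}
Taking expectations and applying Cauchy--Schwarz in $\prob$ gives
$\mean[p^{\omega}(t,x,y)] \leq \mean[p^{\omega}(t,x,x)]^{1/2}\mean[p^{\omega}(t,y,y)]^{1/2}$.
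By stationarity (Assumption~\ref{ass:rcm}) we have $p^{\omega}(t,x,x)=p^{\tau_x\omega}(t,0,0)$, so $\mean[p^{\omega}(t,x,x)]=\mean[p^{\omega}(t,0,0)]$. It therefore suffices to bound $\mean[p^{\omega}(t,0,0)]$.

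\emph{Step 2: case $d\ge3$.} By \eqref{t1-3-1a} it suffices to show $\mean[\cM_1^{\omega}]<\infty$. Write $K=(1+m_0\gamma)/\beta$, so that $\cM_1^{\omega}=(\cM_0^{\omega})^{K}(1+\mu_{m_0}^{\omega}(0)/\theta^{\omega}(0))$. Hölder's inequality with exponents $q_1,q_2$ gives
\begin{align*}
  \mean[\cM_1^{\omega}] \;\le\; \mean\bigl[(\cM_0^{\omega})^{Kq_1}\bigr]^{1/q_1}\,
  \mean\bigl[(1+\mu_{m_0}/\theta)^{q_2}\bigr]^{1/q_2}.
\end{align*}
The second factor is finite by \eqref{t1-3-3}. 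For the first factor, bound $\cM_0^{\omega}\le \mathcal{A}^{\omega}\mathcal{B}^{\omega}$, where
\begin{align*}
  \mathcal{A}^{\omega}=\sup_R\bigl(\Norm{1\vee\mu_{m_0}}{p,B(R)}+\Norm{1\vee\theta}{p,B(R)}\bigr),
  \qquad
  \mathcal{B}^{\omega}=\sup_R\Norm{1\vee\nu}{q,B(R)}.
\end{align*}
Hölder with $p_1,p_2$ then gives $\mean[(\mathcal{A}\mathcal{B})^{Kq_1}]\le \mean[\mathcal{A}^{Kq_1p_1}]^{1/p_1}\mean[\mathcal{B}^{Kq_1p_2}]^{1/p_2}$.

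Now $\mathcal{A}^{Kq_1p_1}$ is a maximal function: it is controlled by $\sup_R \bigl(\frac1{|B(R)|}\sum_{B(R)} (1\vee\mu_{m_0}\vee\theta)^{p}\bigr)^{Kq_1p_1/p}$. The exponent $s=Kq_1p_1/p$ exceeds $1$ whenever $\tilde p>p$. In that case Wiener's $L^{s}$ maximal ergodic inequality bounds its expectation by $C\,\mean[(1\vee\mu_{m_0}\vee\theta)(0)^{Kq_1p_1}]$, which is finite because $\tilde p\ge p_1q_1K$. The stated $\tilde p$ uses $K+1$ rather than $K$, which provides room to take $s>1$ strictly even in borderline cases. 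When $s\le 1$, I would first raise to a power $>1$ using the spare exponent. The factor $\mathcal{B}$ is handled in the same way via $\tilde q$.

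\emph{Step 3: case $d=2$.} This is the main obstacle, since Theorem~\ref{t1-1} requires $d\ge3$. My plan is to lift to $\bbZ^{3}=\bbZ^2\times\bbZ$ by taking the product of $X^{\omega}$ with an independent walk $Y$ on $\bbZ$. To keep the generator a sum, I would choose the third-direction conductances as $\theta^{\omega}(x)$ between $(x,k)$ and $(x,k\pm1)$ and the in-plane conductances as $\omega$, with speed measure $\theta^{\omega}(x)$. The product walk is then $X^{\omega}$ together with a time-homogeneous simple random walk in the vertical coordinate, and its heat kernel factorizes as $p^{\omega}(t,x,y)\,q_t(k,l)$ with $q_t(0,0)\asymp (1\wedge t^{-1/2})$.

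The new environment is stationary and ergodic on $\bbZ^{3}$. Its $\nu$ picks up $2/\theta$, its $\mu_{m_0}$ picks up $2\theta$, and the speed measure is still $\theta$. Applying the $d=3$ bound gives $\mean[p^{\omega}(t,0,0)]\lesssim t^{-3/2}/q_t(0,0)\sim t^{-1}$. The required moments of $1/\theta$ are where the doubled exponents \eqref{t1-3-3a} and condition \eqref{t1-3-3a1} enter, through an extra Cauchy--Schwarz step.

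The logarithm in \eqref{t1-3-4a} suggests the lift is instead done with a truncation. One would split the time-change or the bound $\mu_{m_0}/\theta$ on a scale $\sqrt t$, and use \eqref{t1-3-3a1} to bound the expected occupation $\int_1^t \mean[\ldots]\,ds/s$. This produces $\log t$. I expect verifying the integrability of the lifted $\cM_1$, which is where the $\log$ appears, to be the delicate step.
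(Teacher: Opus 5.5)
\paragraph{Assessment.} Your Steps~1 and~2 coincide with the paper's reduction to the diagonal and with its proof of part~(i): H\"older in $(q_1,q_2)$, then in $(p_1,p_2)$, then the maximal ergodic theorem. There is one small slip. $\tilde p>p$ does not force $s=Kq_1p_1/p>1$; take $p_1q_1K<p<p_1q_1(1+K)$. In that case Jensen with exponent $\tilde p/p>1$ repairs it. If instead $\tilde p=p$, then $s\le K/(1+K)<1$ and the weak-type $(1,1)$ maximal inequality suffices.

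\paragraph{The gap in Step~3.} Lifting to $\bbZ^2\times\bbZ$ means invoking Proposition~\ref{p4-1} and Theorem~\ref{t1-1} with $d=3$, and the hypotheses of (ii) do not supply what that needs.
\begin{enumerate}[(a)]
\item The three-dimensional result requires $m_0>3$, e.g.\ for $\int_0^t s^{-3/m_0}\,\mathrm{d}s<\infty$ in the proof of Proposition~\ref{p4-1}. Part (ii) allows any $m_0>2$.
\item The lifted $\tilde{\nu}=\nu+2/\theta$ enters $\tilde{\cM}_0$. Already the unit ball gives $\tilde{\cM}_0^{\omega}\ge c\,\theta^{\omega}(0)^{-1}$. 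So integrability of the lifted $\tilde{\cM}_1^{\omega}$ forces $\theta^{\omega}(0)^{-1}\in L^{K_3}(\prob)$, where $K_3=(1+m_0\gamma_3)/\beta_3$ is built from the three-dimensional constants of Lemma~\ref{l2-1} (one checks $K_3>7$).
\end{enumerate}
Your assertion that these moments come from \eqref{t1-3-3a} and \eqref{t1-3-3a1} is unfounded. Condition \eqref{t1-3-3a1} gives only $\theta^{-1}\in L^1$, since its numerator is at least $1$. For example, bounded $\mu^{\omega}_{m_0}(0)$ and $\nu^{\omega}(0)$, together with $\theta^{\omega}\le 1$, $\theta^{\omega}(0)^{-1}\in L^4(\prob)\setminus L^5(\prob)$ and $q_1=q_2=2$, satisfy every hypothesis of (ii). Also, $\tilde p$ and $\tilde q$ are defined through the two-dimensional $\beta,\gamma$, not $\beta_3,\gamma_3$. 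Under such additional assumptions the lift would be a valid alternative, and would even give $t^{-1}$ without the logarithm. It does not prove (ii) as stated, and your closing paragraph about truncating at scale $\sqrt t$ is not yet an argument.

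\paragraph{The missing idea.} Proposition~\ref{p4-1} does not need $d\ge 3$. Applied to the time-changed walk $Y$ with speed measure $\pi_{m_0}=1\vee\mu_{m_0}\vee\theta$, it already gives \eqref{t1-1-3} in $d=2$, i.e.\ $q^{\omega}(r,0,0)\le c\,(\cM_0^{\omega})^{K}\min\{1,r^{-1}\}$. The condition $d\ge3$ is used only to make $\int_t^{\infty}r^{-d/2}\,\mathrm{d}r$ finite when transferring back to $X$.

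The paper therefore keeps the random upper limit,
$p^{\omega}(2t,0,0)\le (t\,\theta^{\omega}(0))^{-1}\,\bfE^{\omega}_0\bigl[\int_t^{A^{-1}_{2t}}\indicator_{\{Y_r=0\}}\,\mathrm{d}r\bigr]$.
It handles $r\in[t,8t]$ directly. On $\{2^k\le A^{-1}_{2t}<2^{k+1}\}$ it uses Cauchy--Schwarz:
\begin{itemize}
\item By the Markov property, the second moment of the occupation time of $0$ during $[t,2^{k+1}]$ is $\lesssim(\cM_0^{\omega})^{2K}\pi^{\omega}_{m_0}(0)^2(k+1)^2$.
\item One has $\bfP^{\omega}_0[A^{-1}_{2t}\ge 2^k]\le 2t\,4^{-k}\int_0^{2^k}\bfE^{\omega}_0[\pi^{\omega}_{m_0}(Y_s)/\theta^{\omega}(Y_s)]\,\mathrm{d}s$.
\item The annealed expectation of that integrand is at most $\mean[\pi^{\omega}_{m_0}(0)^2/\theta^{\omega}(0)]$, by invariance of $\prob_{\pi_{m_0}}$ for the environment seen from $Y$.
\end{itemize}
This last step is exactly where \eqref{t1-3-3a1} enters. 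The doubled moments \eqref{t1-3-3a} are needed for $\mean[(\cM_1^{\omega})^2]<\infty$ after a Cauchy--Schwarz in $\prob$. Summing $(k+1)2^{-k/2}t^{-1/2}$ over $2^k\gtrsim t$ produces the $\log t$.
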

\begin{remark}
  Similarly as in Theorem~\ref{t1-1}-(ii), in the nearest neighbour case the results in Theorems~\ref{thm:hke_quenched} and \ref{t1-3} hold for any $p,q \in (1,\infty]$ satisfying $1/p + 1/q < 2/d$ and any $m_{0} > d$.
\end{remark}
Annealed heat kernel estimates and annealed homogenization results such as invariance principles and local limit theorems are crucial for applications to certain models in statistical mechanics. One prime example is the Ginzburg-Landau $\nabla \phi$ interface model, linked to random conductance models via the Helffer-Sj\"ostrand representation, see e.g.\ \cite[Section~5]{DD05} for details. Moreover, various techniques in quantitative stochastic homogenization theory (cf. e.g. \cite{AKM19, GNO15}) rely on annealed heat kernel estimates. On one hand, a quenched invariance principle does imply an annealed invariance principle in general. On the other hand, the same does not apply to local limit theorems or estimates for the annealed heat kernel. In fact, establishing such annealed results often requires different techniques and significantly stronger assumptions (cf.\ e.g.\ \cite{DD05, MO15, An14, AT21}), for instance in form of moment conditions, and Theorem~\ref{t1-3} may be regarded as another item in this collection. Recently, in \cite{DKS23}, for nearest-neighbour random walks under time-dependent conductances that are bounded from below but unbounded from above, only required to have a finite first moment, sharp annealed on-diagonal estimates for the first and second discrete derivative of the heat kernel are obtained, which then are used to prove a local limit theorem for the annealed heat kernel and its discrete first derivative as well as optimal decay rates for the annealed Green's function and its derivatives.

\subsection{Applications on supercritical percolation clusters}\label{s1-4}
We consider bond percolation on $(\bbZ^{d}, E_{d})$ with $d \geq 2$. Let $\Omega \ldef \{0, 1\}^{E_{d}}$ endowed with the canonical coordinate map $e \mapsto \omega(e)$ for every $e \in E_{d}$, and
let $\cF$ denote the collection of all the subsets of $\Omega$. In particular, if $\omega(e) = 1$ for any edge $e \in E_{d}$, then we say that $e$ is \emph{open} under the realization $\omega \in \Omega$, and denote by $\mathcal{O}(\omega)$ the set of open edges. Moreover, let $\mathcal{C}_{\infty}(\omega)$ be the subset of vertices of $\mathbb{Z}^{d}$ that belongs to infinite components, each of which is connected by paths along open edges. Let $\prob$ be probability measure on $(\Omega,\cF)$. Throughout this subsection we will make the following assumption.
\begin{assumption}\label{a1-4}
  (i) The law $\prob$ is stationary and ergodic with respect to space shifts of $\mathbb{Z}^{d}$, that is, $\prob \circ \tau_{x}^{-1} = \prob$ for all $x \in \mathbb{Z}^{d}$ and $\prob[A] \in \{0, 1\}$ for any $A \in \mathcal{F}$ such that $\tau_{x}^{-1}(A) = A$ for all $x \in \mathbb{Z}^{d}$.

  (ii) For $\prob$-a.e.\ $\omega$, the set $\mathcal{C}_{\infty}(\omega)$ is non-empty and there exists a unique infinite connected component, which is also called infinite open cluster, and $\prob\bigl[0 \in \mathcal{C}_{\infty}\bigr] > 0$.
\end{assumption}
We write $\prob_{0}[ \,\cdot\, ] \ldef \prob\bigl[ \,\cdot \,|\, 0 \in \mathcal{C}_{\infty}\bigr]$ to denote the conditional distribution given the event $\{0 \in \mathcal{C}_{\infty}\}$, and $\mean_{0}$ for the expectation with respect to $\prob_{0}$. It is well known that Assumption~\ref{a1-4} holds in the case of i.i.d.\ Bernoulli bond percolation on $\bbZ^{d}$ if $\prob\bigl[\omega(e) > 0\bigr] > p_{c}$, where $p_{c} \equiv p_{c}(d)$ denotes the critical probability for bond percolation on $\mathbb{Z}^{d}$, but it also holds for a class of correlated percolation models as studied e.g.\ in \cite{Sa17}, including level sets of the discrete Gaussian free fields and random interlacements. We refer to \cite{Sa17} for a detailed introduction to such percolation models with long-range correlations.

Let $E(\mathcal{C}_{\infty}(\omega)) \ldef \{\{x, y\} \in \mathcal{O}(\omega) : x, y \in \mathcal{C}_{\infty}(\omega)\}$ be the edge set of the infinite open cluster. We denote by $d^{\omega}$ the graph distance on the pair $\bigl(\mathcal{C}_{\infty}(\omega), E(\mathcal{C}_{\infty}(\omega))\bigr)$, namely, for any $x, y \in \mathcal{C}_{\infty}(\omega)$, $d^{\omega}(x, y)$ is the minimal length of a path joining $x$ and $y$ that consists only of open edges. For $R \geq 1$ and $x \in \cC_{\infty}(\omega)$, let
\begin{align*}
  B^{\omega}(x, R)
  \;\ldef\;
  \bigl\{y \in \cC_{\infty}(\omega) : d^{\omega}(x,y) \leq \lfloor R \rfloor \bigr\},
  \qquad
  Q^{\omega}(x,R)
  \;\ldef\;
  \cC_{\infty}(\omega) \cap Q(x, R),
\end{align*}
where $Q(x, R) \ldef \{z \in \bbZ^{d}: |x-z|_{\infty} \leq \lfloor R \rfloor\}$ with $|x|_{\infty} \ldef \max_{1 \leq i \leq d} |x_i|$, $x \in \bbZ^{d}$, and we write $x \sim y$ for $x, y \in \cC_\infty(\omega)$ if $d^{\omega}(x,y) = 1$. Finally, for any $A \subset B \subset \mathbb{Z}^{d}$ we define the \emph{relative} boundary of $A$ with respect to $B$ by
\begin{align*}
  \partial_{\!B}^{\omega} A
  \;\ldef\;
  \big\{%
    \{x,y\} \in \mathcal{O}(\omega) :
    x \in A \,\text{ and }\, y \in B \setminus A
  \big\},
\end{align*}
and we simply write $\partial^{\omega} A$ if $B \equiv \mathcal{C}_{\infty}(\omega)$. We will assume that the following volume regularity and isoperimetric inequality on large scales hold.
\begin{assumption}\label{a1-5}
  There exists $\alpha_{0} \in (0, 1/2)$ such that, for $\prob_{0}$-a.e.\ $\omega \in \Omega$ and any $d' > d$, there exist a random constant $R_{2}(\omega) > 0$ and non-random constants $C_{7}, C_{8}, C_{9} \in (0,\infty)$ such that for every $R \geq R_{2}(\omega)$ and $C_{7} R^{\alpha_{0}} \leq r \leq R$, the following
  properties hold.
  \begin{enumerate}[(i)]
  \item
    For any $x, y \in B^{\omega}(0, R)$,
    \begin{align}\label{l5-1-3}
      C_{8}\, |x - y|_{\infty}
      \;\leq\;
      d^{\omega}(x,y)
      \;\leq\;
      C_{9}\, \max\bigl\{|x-y|_{\infty}, R^{\alpha_{0}} \bigr\}.
    \end{align}

  \item
    For any $x \in B^{\omega}(0, R)$,
    \begin{align}\label{l5-1-1}
      C_{8}\, r^{d} \;\leq\; |B^{\omega}(x,r)| \;\leq\; C_{9}\, r^{d}
      \qquad \text{and} \qquad
      C_{8}\, r^{d} \;\leq\; |Q^{\omega}(x,r)| \;\leq\; C_{9}\, r^{d}.
    \end{align}

  \item
    For any $x \in B^{\omega}(0, R)$,
    \begin{align}\label{l5-1-2}
      \inf\Biggl\{
        \frac{|\partial^{\omega}_{Q^{\omega}(x,r)} A|}{|A|^{1-\frac{1}{d'}}}
        : A \subset Q^{\omega}(x,r),\
        |A| \leq \frac{|Q^{\omega}(x,r)|}{2}
      \Biggr\}
      \;\geq\;
      C_{8} r^{\frac{d}{d'}-1}.
    \end{align}
  \end{enumerate}
\end{assumption}
\begin{remark}
  (i) For the Bernoulli bond percolation, the large scale comparability \eqref{l5-1-3} between $\ell^\infty$ distance $|x-y|_{\infty}$ and the graph distance $d^{\omega}(x,y)$ has been established in \cite[Proposition~2.17(d)]{Ba04}. The first inequality in \eqref{l5-1-1} can be obtained directly from \cite[Theorem~2.18]{Ba04} while the second inequality in \eqref{l5-1-1} is a combination of \cite[Theorem~2.18]{Ba04} and
  \eqref{l5-1-3}. The estimate \eqref{l5-1-2} with $x=0$ and $r=R$
  has been shown by \cite[Section~3 (16)]{MR04}. Following the same renormalization arguments as in the proof of \cite[Proposition~2.17, Theorem~2.18]{Ba04} we can prove \eqref{l5-1-2} for every $x \in B^{\omega}(0, R)$ and $R^{\alpha_{0}} \leq r \leq R$.

  (ii) For the aforementened percolation models with long-range correlations, Assumption~\ref{a1-5} can also be verified, see \cite[Lemma~3.15, Corollaries~3.16 and 3.17]{Sa17}.
\end{remark}
For $\prob_{0}$-a.e.\ $\omega \in \Omega$, consider a symmetric weight function $W^{\omega}\colon \cC_{\infty}(\omega) \times \cC_{\infty}(\omega) \to (0, \infty)$. Then, for every $m \geq 1$, we define the measure $\mu_{m}^{\omega}, \nu^{\omega}\colon \cC_{\infty}(\omega) \to (0,\infty)$ as
\begin{align*}
  \mu^{\omega}_{m}(x)
  \;\ldef\;
  \sum_{y \in \cC_{\infty}(\omega)} \mspace{-6mu}W^{\omega}(x,y)\, d^{\omega}(x,y)^{m},
  \qquad
  \nu^{\omega}(x)
  \;\ldef\;
  \sum_{\substack{y \in \cC_{\infty}(\omega) \\ y \sim x}} \frac{1}{W^{\omega}(x,y)}.
\end{align*}
Hence taking $\boldsymbol{V} = \cC_{\infty}(\omega)$ and $W(x,y) = W^{\omega}(x,y)$ and fixing a positive reference measure $\theta^{\omega}\colon \cC_{\infty}(\omega)\to (0,\infty)$ on $\cC_{\infty}(\omega)$, there exists an associated Hunt process $(X_{t}^{\omega})_{t \geq 0}$ with heat kernel $\{p^{\omega}(t, y, z)\}_{t>0;y,z\in \cC_{\infty}(\omega)}$. In particular, the infinitesimal generator $\mathcal{L}^{\omega}$ of $(X_t^{\omega})_{t \geq 0}$ acts on functions $f \in C_{c}(\cC_{\infty}(\omega))$ as
\begin{align*}
  (\mathcal{L}^{\omega} f)(x)
  \;\ldef\;
  \frac{1}{\theta^{\omega}(x)} \sum_{y \in \cC_{\infty}(\omega)}
  \mspace{-6mu}W^{\omega}(x,y)\, \bigl(f(y) - f(x)\bigr).
\end{align*}
Then, the above results will allow us to deduce the following (quenched) anchored Nash inequality on $\cC_{\infty}(\omega)$ and an on-diagonal heat kernel estimate.
\begin{theorem}\label{t1-4}
  Suppose that Assumptions~\ref{a1-4} and \ref{a1-5} hold and for $\prob_{0}$-a.e.\ $\omega$,
  \begin{align}\label{t1-4-3}
    \inf_{y \in \cC_{\infty}(\omega)} \theta^{\omega}(y) > 0.
  \end{align}
  Further, assume that there exist $p, q \in (1, \infty]$ satisfying \eqref{a1-1-2} such that for $\prob_{0}$-a.e.\ $\omega$,
  \begin{align}\label{t1-4-2}
    \sup_{R \geq 1} \Bigl(
      \Norm{1 \vee \theta^{\omega}}{p, B^{\omega}(0, R)}
      + \Norm{1 \vee \nu^{\omega}}{q, B^{\omega}(0, R)}
    \Bigr)
    \;<\;
    \infty.
  \end{align}
  Then, for every $m \geq 2$ and $\prob_{0}$-a.e.\ $\omega$, there exist $C_{10}(\omega) \in (0, \infty)$ and $\alpha, \beta, \gamma \in (0, 1)$ (independent of $\omega$) with $\alpha + \beta + \gamma = 1$ such that for every $f\colon \cC_{\infty}(\omega) \to \bbR$,
  \begin{align}\label{t1-4-1}
    \norm{f}{2}{\cC_{\infty}(\omega), \theta^{\omega}}^2
    \;\leq\;
    C_{10}(\omega)\, \cE^{\omega}(f)^{\alpha}\,
    \norm{f}{1}{\cC_{\infty}(\omega), \theta^{\omega}}^{2\beta}\,
    \norm{(\eta^{\omega})^{m/2} f}{2}{\cC_{\infty}(\omega), \theta^{\omega}}^{\gamma},
  \end{align}
  where $\eta^{\omega}(y) \ldef \max\{1, d^{\omega}(0, y)\}$ and
  \begin{align*}
    \cE^{\omega}(f)
    \;\ldef\;
    \frac{1}{2}\,
    \sum_{\substack{x,y \in \cC_{\infty}(\omega) \\ y \sim x}}
    \mspace{-12mu}W^{\omega}(x,y)\, \bigl(f(y) - f(x)\bigr)^2.
  \end{align*}
\end{theorem}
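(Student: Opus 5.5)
Theorem~\ref{t1-4} will follow from Theorem~\ref{thm:nash_int}(i), applied to the graph $\boldsymbol{V} = \cC_{\infty}(\omega)$ with the open-edge structure $E(\cC_{\infty}(\omega))$, distance $d^{\omega}$, anchor $o = 0$, and $W = W^{\omega}$ restricted to nearest neighbours. Since $\cE^{\omega}$ only involves $y \sim x$, it is the Dirichlet form \eqref{e1-3} for the nearest-neighbour conductances. Condition \eqref{t1-4-3} is exactly $\inf\theta>0$. It also bounds $\Norm{\theta^{-1}}{1,B(R)}$ uniformly, so together with \eqref{t1-4-2} we get \eqref{a1-1-1} and $\cM<\infty$. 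The main work is therefore to show that, for $\prob_0$-a.e.\ $\omega$, the graph $\cC_{\infty}(\omega)$ satisfies Assumption~\ref{a2-2} with $R_{0}=R_{1}$ some random constant built from $R_{2}(\omega)$. The resulting $C_{10}(\omega)=C_{2}R_{1}(\omega)^{m}\cM(\omega)$ is then finite. The exponents $\alpha,\beta,\gamma$ come from Lemma~\ref{l2-1} and depend only on $d,p,q,m$, so they are independent of $\omega$.

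The volume condition \eqref{a2-2-1} is the first part of \eqref{l5-1-1}, valid for $R\ge R_2(\omega)$ and $C_7R^{\alpha_0}\le r\le R$. Rescaling $\alpha_0$ slightly, or absorbing $C_7$ by enlarging $R_0$, turns the range $C_7R^{\alpha_0}\le r$ into $R^{\alpha_0'}\le r$ with some $\alpha_0'<1/2$.

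For the Sobolev–Poincaré inequality \eqref{a2-2-2}, I would argue in the usual way: isoperimetry implies a Sobolev inequality.
\begin{itemize}
\item First, the relative isoperimetric inequality \eqref{l5-1-2} on boxes $Q^{\omega}(x,r)$ with exponent $d'$ gives, by the coarea/layer-cake argument (Federer–Fleming / Maz'ya truncation), an $\ell^{1}$ Sobolev–Poincaré inequality on $Q^{\omega}(x,r)$. It reads
\[
\inf_{c}\Bigl(\sum_{Q^{\omega}}|f-c|^{d'/(d'-1)}\Bigr)^{(d'-1)/d'}\le C r^{1-d/d'}\sum_{\text{edges in }Q^{\omega}}|\nabla f|.
\]
To get it, apply \eqref{l5-1-2} to level sets of $(f-\mathrm{med})_\pm$, which have at most half the volume.
\item Second, the standard trick of applying this to $|g|^{s}$ with $g=f-\mathrm{med}$, combined with H\"older, upgrades it to exponent $\rho\in(1,d')$ with $\rho_*$ as in \eqref{a2-2-3}. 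Normalising by $|Q^{\omega}(x,r)|\asymp r^d$ produces the factor $r^{1-d/d'}$ in the normalized norm. Replacing the median by the mean costs only a constant.
\end{itemize}

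Finally I must pass from boxes to balls. By \eqref{l5-1-3}, $B^{\omega}(x,r)\subset Q^{\omega}(x,r/C_8)$ and $Q^{\omega}(x,r/C_8)\subset B^{\omega}(x,\delta_S r)$ for $r\ge C R^{\alpha_0}$ (using the $\max$ with $R^{\alpha_0}$). Here some care is needed: the edges inside the box must lie inside the larger ball, which holds since both endpoints are within $d^{\omega}$-distance $C_9 r/C_8$. The volumes are comparable by \eqref{l5-1-1}. Using $\|f-(f)_B\|_{\rho_*,B}\le 2\,(|Q|/|B|)^{1/\rho_*}\|f-(f)_Q\|_{\rho_*,Q}$, we obtain \eqref{a2-2-2} with $\delta_S=C_9/C_8$. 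One must also restrict to $x\in B^{\omega}(0,R)$ and keep the enlarged ball inside the region where Assumption~\ref{a1-5} holds. This is handled by applying Assumption~\ref{a1-5} at scale $2\delta_S R$, which changes $R_1$ only by a constant factor.

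The main obstacle I expect is the isoperimetric-to-Sobolev step. The relevant subtleties are:
\begin{itemize}
\item the isoperimetric inequality is relative to $Q^{\omega}$ and only holds for sets of at most half volume;
\item the median must be handled rather than the mean;
\item the range of $r$ must be tracked through the box/ball comparisons so that the resulting constants $C_0,C_S,\delta_S$ are non-random, and all randomness sits in $R_1(\omega)$.
\end{itemize}
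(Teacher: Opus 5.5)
Your proposal is correct and follows essentially the same route as the paper, which also obtains Theorem~\ref{t1-4} from Theorem~\ref{thm:nash_int}(i) by verifying Assumption~\ref{a2-2} on $\cC_{\infty}(\omega)$. There, volume regularity comes from \eqref{l5-1-1}, and the Sobolev--Poincar\'e inequality is proved on boxes in Proposition~\ref{l5-2}: isoperimetry \eqref{l5-1-2} gives an $\ell^{1}$ median Sobolev inequality, the power trick on $f_{\pm}^{\rho_{*}/d_{*}}$ upgrades it, and the result is transferred to balls via \eqref{l5-1-3}. Your extra bookkeeping (shifting $\alpha_{0}$, enlarging the scale so boxes stay in the comparison region) only makes explicit what the paper leaves implicit; one small correction is that no normalization step is needed, since \eqref{a2-2-2} is stated for the unnormalized $\ell^{\rho_{*}}$-norm and the factor $r^{1-d/d'}$ passes through the power trick unchanged.
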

\begin{theorem}\label{t1-5}
  Let $d \geq 3$. Suppose that Assumptions~\ref{a1-4} and \ref{a1-5} hold, and there exist $m_{0} > d$ and $p \in \bigl(\max\{1, d/(m_{0}-2)\}, \infty\bigr]$, $q \in (1, \infty]$ satisfying \eqref{a1-1-2} such that, for $\prob_{0}$-a.e.\ $\omega$, \eqref{t1-4-3}, \eqref{t1-4-2} and
  \begin{align}\label{t1-5-1}
    \sup_{R \geq 1} \Norm{1 \vee \mu_{m_{0}}^{\omega}}{p, B^{\omega}(0, R)}
    \;<\;
    \infty
  \end{align}
  hold. Then, for $\prob_{0}$-a.e.\ $\omega$, there exists a random constant $C_{11}(\omega) \in (0, \infty)$ such that for all $t > 0$,
  \begin{align}\label{t1-5-2}
    p^{\omega}(t, 0, 0) \;\leq\; C_{11}(\omega)\, t^{-d/2}.
  \end{align}
\end{theorem}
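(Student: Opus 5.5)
The plan is to deduce Theorem~\ref{t1-5} from Theorem~\ref{t1-1}(i), applied for fixed $\omega$ to the graph $\boldsymbol{V} = \cC_{\infty}(\omega)$ with edge set $E(\cC_{\infty}(\omega))$, graph distance $d^{\omega}$, base point $o = 0$, weights $W = W^{\omega}$ and reference measure $\theta = \theta^{\omega}$. This is the same reduction by which Theorem~\ref{t1-4} follows from Theorem~\ref{thm:nash_int}(i). Everything then comes down to verifying, for $\prob_{0}$-a.e.\ $\omega$, the two hypotheses of Theorem~\ref{t1-1}(i).

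\emph{Step 1: integrability.} The integrability condition \eqref{a1-4-1} on $B^{\omega}(0,R)$ is given directly by \eqref{t1-4-2} together with \eqref{t1-5-1}, since $\theta^{\omega} \leq 1 \vee \theta^{\omega}$ and similarly for $\mu^{\omega}_{m_{0}}$ and $\nu^{\omega}$. Consequently the constant $\cM_{0}$ in \eqref{t1-1-2} is a finite random quantity $\cM_{0}(\omega)$.

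\emph{Step 2: Assumption~\ref{a2-2}, the main obstacle.}
\begin{itemize}
\item The volume bound \eqref{a2-2-1} is exactly the first part of \eqref{l5-1-1}, with $R_{0} = R_{2}(\omega)$.
\item The Sobolev--Poincar\'e inequality \eqref{a2-2-2} is the real work. I would first derive a Sobolev inequality on boxes from the relative isoperimetric inequality \eqref{l5-1-2}, via the standard co-area argument (as in \cite{MR04} or \cite{Sa17}). Concretely, for $\rho = 1$ and $f$ on $Q^{\omega}(x,r)$, one obtains an $\ell^{d'/(d'-1)}$ bound for $f - \mathrm{med}(f)$ in terms of $r^{1-d/d'}$ times the $\ell^{1}$ gradient, in normalized norms. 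This uses $|Q^{\omega}| \asymp r^{d}$ to convert the factor $r^{d/d'-1}$ in \eqref{l5-1-2} into the normalization appearing in \eqref{a2-2-2}.
\item The general exponent $\rho>1$ then follows by applying the $\rho=1$ case to $|g|^{s}$ with a suitable power $s$ and using H\"older's inequality, yielding the pair $(\rho,\rho_{*})$ from \eqref{a2-2-3}.
\item Finally, one passes from boxes to intrinsic balls with \eqref{l5-1-3}: there is a constant $c$ such that $B^{\omega}(x,r) \subset Q^{\omega}(x, r/C_{8})$, and such that $Q^{\omega}(x, r/C_{8}) \cap$ (the relevant cluster paths) lies in $B^{\omega}(x, \delta_{\mathrm{S}} r)$ once $r \geq C_{0}R^{\alpha_{0}}$. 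This is where the lower cutoff $C_{0}R^{\alpha_{0}}$ enters.
\item On the box, replace the mean by the box mean, and restrict to the ball using comparable volumes \eqref{l5-1-1}, since $\|f-(f)_{B}\|_{B} \lesssim \|f-(f)_{Q}\|_{Q}$.
\item This gives \eqref{a2-2-2} with $R_{1}(\omega) \geq R_{2}(\omega)$ large.
\end{itemize}

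\emph{Step 3: conclusion.} Theorem~\ref{t1-1}(i) now gives \eqref{t1-5-2} with
\begin{align*}
C_{11}(\omega) = C_{3}\, R_{0}^{(1/p+1/q)m_{0}\gamma/(2\beta)}\, R_{1}^{m_{0}/\beta}\, \cM_{0}(\omega)^{(1+m_{0}\gamma)/\beta}\,\bigl(1+\mu^{\omega}_{m_{0}}(0)/\theta^{\omega}(0)\bigr).
\end{align*}
This is finite because $\theta^{\omega}(0)>0$ by \eqref{t1-4-3} and $\mu^{\omega}_{m_{0}}(0)<\infty$ by \eqref{t1-5-1} with $R=1$. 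The hypothesis $d\geq3$ is needed exactly as in Theorem~\ref{t1-1}.
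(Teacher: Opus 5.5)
Your proposal is correct and follows essentially the same route as the paper. You apply Theorem~\ref{t1-1}(i) on $\cC_{\infty}(\omega)$, take \eqref{a2-2-1} from \eqref{l5-1-1}, derive \eqref{a2-2-2} on boxes from \eqref{l5-1-2} via a median-based $\ell^{1}$ Sobolev inequality plus a power trick (this is Proposition~\ref{l5-2}), and pass to intrinsic balls using \eqref{l5-1-1} and \eqref{l5-1-3}. One small correction: apply the $\rho=1$ bound to $g_{+}^{s}$ and $g_{-}^{s}$ separately, as the paper does, since each has median $0$ when $g$ does; applying it to $|g|^{s}$ does not work, because its median need not vanish.
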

\begin{remark}
  If the weights $W^{\omega}(x,y)$ and the speed measure $\theta^{\omega}$ are again assumed to be stationary and ergodic random variables, then conditions \eqref{t1-4-2} and \eqref{t1-5-1} can be replaced by the  simpler moment conditions $\mean[\theta^{\omega}(0)^p] < \infty$, $\mean[\nu^{\omega}(0)^{q}] < \infty$, and $\mean[\mu_{m_{0}}^{\omega}(0)^p] < \infty$, respectively, by invoking the ergodic theorem. Indeed, note that for sufficiently large $R$, using the volume regularity and the fact that $B^{\omega}(0, R) \subseteq B(0, R)$ we have $\prob_{0}$-almost surely,
  \begin{align*}
    \frac{1}{|B^{\omega}(0,R)|}
    \sum_{y \in B^{\omega}(0,R)} \theta^{\omega}(y)^{p} 
    \;\leq\;
    c R^{-d} \sum_{y \in B(0,R)} \theta^{\omega}(y)^{p} 
    \;\underset{R\to \infty}{\longrightarrow}\;
    c\, \mean[\theta^{\omega}(0)^p],
  \end{align*}
  and similarly for the other terms involving $\nu^{\omega}$ and $\mu_{m_{0}}^{\omega}$.
\end{remark}
The rest of the paper is organised as follows. In Section~\ref{sec:Nash} we prove the anchored Nash inequalities in Theorem~\ref{thm:nash} and Theorem~\ref{thm:nash_int}, and  in Section~\ref{sec:hke} we deduce the heat kernel bound in Theorem~\ref{t1-3-1a} from them. The annealed heat kernel bound for the random conductance model is shown in Section~\ref{sec:rcm}. The appendix contains a technical lemma. Throughout the paper we write $c$ to denote a positive constant which may change on each appearance. Constants denoted $C_{i}$ will remain the same.

\section{Anchored Nash inequality} \label{sec:Nash}
\subsection{Notation and preliminaries}
For $\phi\colon \boldsymbol{V} \to [0, \infty)$, $p \in [1, \infty)$ and any non-empty, finite subset $B \subset \boldsymbol{V}$, we define space-averaged weighted $\ell^{p}$-norms
{\color{blue} for}
functions $f\!: B \to \bbR$ by
\begin{align*}
  \Norm{f}{p, B, \phi}
  \;\ldef\;
  \bigg(
    \frac{1}{|B|}\; \sum_{x \in B}\, |f(x)|^p\, \phi(x)
  \bigg)^{\!\!1/p}
  \qquad \text{and} \qquad
  \Norm{f}{\infty, B} \;\ldef\; \max_{x \in B} |f(x)|.
\end{align*}
If $\phi \equiv 1$, we still simply write $\|f\|_{p, B} \ldef \|f\|_{p, B, \phi}$. Moreover, we define  $(f)_{B} \equiv (f)_{B,1}$ and $(f)_{B, \phi} \ldef \sum_{x \in B} \phi(x) f(x) / \sum_{x \in B} \phi(x)$ for any non-negative weight $\phi$ on $\boldsymbol{V}$.
Then by H\"older's inequality, it is easy to see that for every $1\le p_1\le p_2<\infty$ and any finite subset $A\subset \boldsymbol{V}$,
\begin{align}\label{e1-2a}
\|f\|_{p_1,A}\le \|f\|_{p_2,A}.
\end{align}

\subsection{Anchored Nash inequality}
In this subsection we will prove the anchored Nash inequality in Theorem~\ref{thm:nash}.
\begin{lemma}\label{l1-1}
  Suppose that Assumption~\ref{a2-2} holds. Then, for every $d' > d$ there exists a constant $C_{12}>0$ such that for every $R \geq R_1$ (here $R_{1}$ is the same constant in Assumption~\ref{a2-2} associated with $d'$) and $f, g\colon B(R) \to \R$ with $(g)_{B(R)} = 1$,
  \begin{align}\label{l1-1-2}
    &\norm{f - (fg)_{B(R)}}{\rho_{*}}{B(R)}
    \nonumber\\
    &\mspace{36mu}\leq\;
    C_{12}\, R^{1-d/d'}\,
    \Bigl(
      1 + \Norm{g}{\rho_{*}/(\rho_{*}-1), B(R)}
    \Bigr)\,
    \Biggl(
      \sum_{\substack{y, z \in B(\delta_{\mathrm{S}} R) \\ y \sim z}}
      \mspace{-12mu}\bigl|f(y) - f(z)\bigr|^{\rho}
    \Biggr)^{\!1/\rho}.
  \end{align}
  %
  %
  Here $\rho, \rho_{*} \in (1, \infty)$ and $\delta_{\mathrm{S}} \in [1, \infty)$ are the same constants as in Assumption~\ref{a2-2}-(ii) satisfying \eqref{a2-2-3} and \eqref{a2-2-2}.
\end{lemma}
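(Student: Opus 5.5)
The plan is to reduce the weighted mean $(fg)_{B(R)}$ to the plain mean $(f)_{B(R)}$, which Assumption~\ref{a2-2}-(ii) controls directly. Write $\bar f \ldef (f)_{B(R)}$. Since $(g)_{B(R)} = 1$, we have $(fg)_{B(R)} - \bar f = \bigl((f - \bar f)\,g\bigr)_{B(R)}$, and therefore
\begin{align*}
  f - (fg)_{B(R)}
  \;=\;
  \bigl(f - \bar f\bigr) \,-\, \bigl((f-\bar f)\,g\bigr)_{B(R)}.
\end{align*}
The triangle inequality for the normalized norm $\|\cdot\|_{\rho_*,B(R)}$ gives a bound by $\|f-\bar f\|_{\rho_*,B(R)} + \bigl|((f-\bar f)g)_{B(R)}\bigr|$, because a constant has normalized $\rho_*$-norm equal to its absolute value.

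For the second term, apply H\"older's inequality with respect to the normalized counting measure on $B(R)$, using exponents $\rho_*$ and $\rho_*/(\rho_*-1)$:
\begin{align*}
  \bigl|((f-\bar f)\,g)_{B(R)}\bigr|
  \;\leq\;
  \|f-\bar f\|_{\rho_*,B(R)}\, \Norm{g}{\rho_*/(\rho_*-1), B(R)}.
\end{align*}
Combining the two pieces, it remains to bound $\|f-\bar f\|_{\rho_*,B(R)}$ by the factor $(1+\|g\|)$ times the Sobolev–Poincar\'e expression.

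That final bound is exactly \eqref{a2-2-2} with $x = o \in B(R)$ and $r = R$. The admissibility conditions are $R \geq R_1$ and $C_0 R^{\alpha_0} \leq R$. The first is assumed. The second holds once $R$ is large, since $\alpha_0 < 1/2$.

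The only real point to address is the window where $R_1 \leq R < C_0^{1/(1-\alpha_0)}$. Two fixes are available. One can enlarge $R_1$, since Assumption~\ref{a2-2} lets $R_1$ be chosen depending on $d'$ and the constants. Alternatively, for these finitely many radii one can bound $\|f-\bar f\|_{\rho_*}$ crudely by $\max$ minus $\min$ of $f$ on $B(R)$. That in turn is bounded by the sum of $|f(y)-f(z)|$ along edges of a geodesic path inside $B(R)$, hence by a constant times the $\ell^\rho$ edge sum, with the constant depending only on the bounded range of $R$ and on the volume bound \eqref{a2-2-1}. I would take the first route, reading $R_1$ as large enough that $C_0 R^{\alpha_0} \leq R$ for all $R \geq R_1$, which is harmless. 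This gives \eqref{l1-1-2} with $C_{12} = C_{\mathrm{S}}$.
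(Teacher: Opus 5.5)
Your proof is correct and is the paper's proof: you use $(g)_{B(R)}=1$ to write $(fg)_{B(R)}-(f)_{B(R)}=\bigl((f-(f)_{B(R)})\,g\bigr)_{B(R)}$, bound this by Minkowski and H\"older to produce the factor $1+\|g\|_{\rho_{*}/(\rho_{*}-1),B(R)}$, and then invoke \eqref{a2-2-2} with $x=o$, $r=R$, giving $C_{12}=C_{\mathrm{S}}$. The norm on the left of \eqref{l1-1-2} is actually the unnormalized $\|\cdot\|_{\ell^{\rho_{*}}(B(R))}$, which is why the paper's version carries a factor $|B(R)|^{1/\rho_{*}}$; this is immaterial because your intermediate inequality is homogeneous, and your check that $C_{0}R^{\alpha_{0}}\leq R$ (a point the paper passes over silently) is handled exactly as the paper does elsewhere, by enlarging $R_{1}$.
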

\begin{proof}
  Fix some $R \geq R_{0}$. Since $(g)_{B(R)} = 1$, an application of Minkowski's and H\"older's inequality yields,
  \begin{align*}
    &\norm{f - (fg)_{B(R)}}{\rho_{*}}{B(R)}
    \\[.5ex]
    &\mspace{36mu}\leq\;
    \norm{f - (f)_{B(R)}}{\rho_{*}}{B(R)}
    \,+\, |B(R)|^{1/\rho_{*}}\,
    \bigl|(fg)_{B(R)} - (f)_{B(R)}\bigr|
    \\[.5ex]
    &\mspace{36mu}\leq\;
    \Bigl(1 + \Norm{g}{\rho_{*}/(\rho_{*}-1), B(R)}\Bigr)\,
    \norm{f - (f)_{B(R)}}{\rho_{*}}{B(R)}
  \end{align*}
  Thus, by combining this with \eqref{a2-2-2} (with $x=o$ and $r=R$) the assertion \eqref{l1-1-2} follows.
\end{proof}
\begin{proof}[Proof of Theorem~\ref{thm:nash}]
  Due to \eqref{a1-1-2} there exists $d' > d$ such that
  \begin{align}\label{p2-1-6}
    \frac{1}{p} + \frac{1}{q} \;<\; \frac{2}{d'}.
  \end{align}
  Let $R_{1} \geq 2$ be the constant in Assumption~\ref{a2-2} associated with this $d'$. Set $\rho_{*} = 2p/(p-1)$ and $\rho = \rho_{*} d'/(\rho_{*}+d') = 2p d'/(pd' + 2p - d')$. Fix $R \geq R_{1}$ and let $A(R) \ldef B(R) \setminus B(\kappa R)$ and
  \begin{align*}
    g(x)
    \;\ldef\;
    \begin{cases}
      \dfrac{|B(R)|}{|A(R)|}, & x \in A(R),
      \\
      0, & \text{otherwise},
    \end{cases}
  \end{align*}
  where $\kappa \in (0,1)$ is a (small) constant to be determined later. In particular, $(g)_{B(R)} = 1$. Taking $\kappa \ldef 2^{-d} (c_{{\rm reg}}/C_{\mathrm{reg}})^{1/d}$ with $c_{\mathrm{reg}}$ and $C_{\mathrm{reg}}$ as in \eqref{a2-2-1}, the volume regularity \eqref{a2-2-1} implies that, for every $R \geq R_{1}$ (after choosing a larger $R_{1}$ if necessary),
  \begin{align}\label{p2-1-3}
    |A(R)|
    \;=\;
    |B(R)| - |B(\kappa R)|
    \;\geq\;
    c_{\mathrm{reg}} R^{d} - C_{\mathrm{reg}}(\kappa R)^{d}
    \;\geq\;
    \frac{c_{\mathrm{reg}} R^{d}}{2}.
  \end{align}
  Hence, $\Norm{g}{\rho_{*}/(\rho_{*}-1), B(R)} \leq c$ for all $R \geq R_{1}$. Moreover, in view of Lemma~\ref{l1-1} we obtain that
  \begin{align}\label{p2-1-2}
    &\norm{f - (fg)_{B(R)}}{\rho_{*}}{B(R)}
    \nonumber\\
    &\mspace{36mu}\leq\;
    C_{12}\, R^{1-d/d'}\, \Bigl(
      1 + \Norm{g}{\rho_{*}/(\rho_{*}-1), B(R)}
    \Bigr)\,
    \Biggl(
      \sum_{\substack{y, z \in B(\delta_{\mathrm{S}} R) \\ y \sim z}}
      \mspace{-12mu}\bigl|f(y) - f(z)\bigr|^{\rho}
    \Biggr)^{\!1/\rho}
    \nonumber\\
    &\mspace{36mu}\leq\;
    c R^{1-d/d'}\,
    \bigl|B(\delta_{\mathrm{S}} R)\bigr|^{(2-\rho)/(2\rho)}\,
    \cE(f)^{1/2}\,
    \Norm{\nu}{\rho/(2-\rho), B(\delta_{\mathrm{{S}}} R)}^{1/2},
  \end{align}
  where we used H\"older's inequality, the definitions of $\cE(f)$  and $\nu$ in \eqref{e1-3} and \eqref{def:mu_nu}, respectively. Since $\rho/(2-\rho) < q$, which is ensured by \eqref{p2-1-6}, an application of Jensen's inequality $\Norm{\nu}{\rho/(2-\rho), B(\delta_{\mathrm{S}} R)} \leq \Norm{\nu}{q, B(\delta_{\mathrm{S}} R)}$. On the other hand, by the definition of $g$ and \eqref{p2-1-3},
  \begin{align}\label{eq:est_av}
    \bigl|(fg)_{B(R)}\bigr|
    \;\leq\;
    \Norm{f}{1, A(R)}
    &\;\leq\;
    \Norm{\theta^{-1}}{1, A(R)}^{1/2}\,
    \Norm{f^2}{1, A(R), \theta}^{1/2}
    \nonumber\\[.5ex]
    &\;\leq\;
    c\, R^{-(m+d)/2}\,
    \Norm{\theta^{-1}}{1, B(R)}^{1/2}\,
    \norm{\eta^{m/2} f}{2}{B(R), \theta}
  \end{align}
  with $\eta\colon \boldsymbol{V} \to (0, \infty)$ defined in \eqref{e1-4}. Further, by using the H\"older inequality and the fact that $\rho_{*}/(\rho_{*}-2) = p$ we get
  \begin{align*}
    &\norm{f^{2}}{1}{B(R), \theta}
    \\[.5ex]
    &\mspace{36mu}\leq\;
    \bigl|B(R)\bigr|^{(\rho_{*}-2)/\rho_{*}}\,
    \Norm{\theta}{\rho_{*}/(\rho_{*}-2), B(R)}\,
    \norm{f^{2}}{\rho_{*}/2}{B(R)}
    \\[.5ex]
    &\mspace{36mu}\leq\;
    2\, \bigl|B(R)\bigr|^{1/p}\,
    \Norm{\theta}{p, B(R)}\,
    \Bigl(
      \Norm{f - (fg)_{B(R)}}{\rho_{*}, B(R)}^{2}
      + \bigl|B(R)\bigr|^{2/\rho_{*}} \bigl|(fg)_{B(R)}\bigr|^{2}
    \Bigr).
  \end{align*}
  %
  %
  Combining this estimate with \eqref{p2-1-2} and \eqref{eq:est_av} and using again \eqref{a2-2-1} yields, for every $R \geq R_{1}$,
  \begin{align} \label{eq:estL2f}
    &\norm{f^{2}}{1}{B(R), \theta}
    \nonumber\\[.5ex]
    &\;\leq\;
    c\,\cM\, R^{d/p}
    \Bigl(
      R^{d(2-\rho)/\rho + 2(1-d/d')}\, \cE(f)
      + R^{-m-d+2d/\rho_{*}}\,
      \norm{\eta^{m/2} f}{2}{\boldsymbol{V}, \theta}^2
    \Bigr)
    \nonumber\\
    &\;\leq\;
    c\, \cM\,
    \Bigl(
      R^{2}\, \cE(f)
      + R^{-m}\,
      \norm{\eta^{m/2} f}{2}{\boldsymbol{V}, \theta}^{2}
    \Bigr)
  \end{align}
  %
  %
  with $\cM$ as defined in \eqref{def:M}. In the last step we have used \eqref{a2-2-1} and the fact that
  \begin{align*}
    \frac{d}{p}
    + \frac{d(2-\rho)}{\rho}
    + \frac{2(d'-d)}{d'}
    \;=\;
    \frac{2d}{d'} + \frac{2(d'-d)}{d'}
    \;=\;
    2
    \qquad \text{and} \qquad
    \frac{1}{p} + \frac{2}{\rho_{*}} \;=\; 1,
  \end{align*}
  which can be verified directly by the definition of $\rho$ and $\rho_{*}$.

  Moreover, by the definition of $\eta$ we have for every $R \geq R_{1}$,
  \begin{align}\label{p2-1-5}
    \sum_{y \not\in B(R)} |f(y)|^{2} \theta(y)
    \;\leq\;
    R^{-m}\, \sum_{y \not\in B(R)} \eta(y)^{m} |f(y)|^{2} \theta(y)
    \;\leq\;
    R^{-m}\, \norm{\eta^{m/2} f}{2}{\boldsymbol{V}, \theta}^{2}.
  \end{align}
  This together with \eqref{eq:estL2f} yields that, for every $R \geq R_{1}$,
  \begin{align}\label{p2-1-4}
    \norm{f}{2}{\boldsymbol{V}, \theta}^{2}
    \;\leq\;
    c\, \cM\,
    \Bigl(
      R^{2}\, \cE(f)
      + R^{-m}\,
      \norm{\eta^{m/2} f}{2}{\boldsymbol{V}, \theta}^{2}
    \Bigr).
  \end{align}
  On the other hand, since $\eta(y)\ge 1$ by definition, for all $0 < R \leq R_{1}$,
  \begin{align}\label{p2-1-5a}
    \norm{f}{2}{\boldsymbol{V}\!,\, \theta}^{2}
    \;\leq\;
    \norm{\eta^{m/2} f}{2}{\boldsymbol{V}\!, \theta}
    \;\leq\;
    R_{1}^{m}\, R^{-m}\,
    \norm{\eta^{m/2} f}{2}{\boldsymbol{V}\!,\, \theta}^{2}.
  \end{align}
  Thus, by combining this estimate with \eqref{p2-1-4} we obtain
  \begin{align*}
    \norm{f}{2}{\boldsymbol{V}\!,\, \theta}^{2}
    \;\leq\;
    c\, R_{1}^{m}\, \cM\,
    \Bigl(
      R^{2}\, \cE(f)
      + R^{-m}\,
      \norm{\eta^{m/2} f}{2}{\boldsymbol{V}\!,\, \theta}^{2}
    \Bigr).
  \end{align*}
  Finally, by optimising the right-hand side of \eqref{p2-1-4} over $R > 0$, that is,
  \begin{align*}
    \inf_{R > 0}
    \Bigl(
      R^{2}\, \cE(f)
      + R^{-m}\,
      \norm{\eta^{m/2} f}{2}{\boldsymbol{V}\!,\, \theta}^{2}
    \Bigr)
    \;=\;
    2\, \cE(f)^{m/(m+2)}\,
    \norm{\eta^{m/2} f}{2}{\boldsymbol{V}\!,\, \theta}^{4/(m+2)},
  \end{align*}
  the desired assertion \eqref{p2-1-1} follows.
\end{proof}

\subsection{Interpolated anchored Nash inequality}
In this subsection we will show Theorem~\ref{thm:nash_int}. For the proof we will use the following lemma.
\begin{lemma}\label{l2-1}
  Let $a, a', b, b', c > 0$. Then, there exists constants $\alpha, \beta, \gamma \in (0, 1)$ with $\alpha + \beta + \gamma = 1$ such that, for any $A, B, C \in (0, \infty)$
  \begin{align}\label{l2-1-2}
    \inf_{r > 0, R > 0}
    \biggl(
      r^{a'} R^{a} A + r^{-b'}\biggl(\frac{R}{r}\biggr)^{\!\!b}\,B
      + R^{-c}\, C
    \biggr)
    \;\leq\;
    3\, A^{\alpha} B^{\beta} C^{\gamma}.
  \end{align}
  In particular, for $a' = 2 - d/q$, $a = d/q$, $b' = b = d$, $c = m$ with $d \geq 2$, $q > d/2$ and $m \geq 2$,
  \begin{align}\label{l2-1-1}
    \frac{d+2}{2}\,\beta + \frac{m+2}{2}\,\gamma \;=\; 1.
  \end{align}
\end{lemma}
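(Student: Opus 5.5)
The plan is to choose $r$ and $R$ as explicit powers of $A,B,C$ so that the three terms inside the infimum in \eqref{l2-1-2} are all equal. The infimum is then at most three times this common value, and the common value will turn out to be exactly of the form $A^{\alpha}B^{\beta}C^{\gamma}$.

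\emph{Step 1: balancing.} Write $x=\log r$, $y=\log R$. Requiring
\begin{align*}
  r^{a'}R^{a}A \;=\; r^{-b'-b}R^{b}B \;=\; R^{-c}C
\end{align*}
is the linear system
\begin{align*}
  (a'+b'+b)\,x+(a-b)\,y &= \log B-\log A,\\
  a'\,x+(a+c)\,y &= \log C-\log A.
\end{align*}
Its determinant is
\begin{align*}
  D \;=\; (a'+b'+b)(a+c)-a'(a-b) \;=\; a'c+a'b+(b'+b)(a+c) \;>\;0,
\end{align*}
so there is a unique solution for any $A,B,C>0$. By Cramer's rule, $x$ and $y$ are linear combinations of $\log B-\log A$ and $\log C-\log A$.

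\emph{Step 2: reading off the exponents.} The logarithm of the common value is $a'x+ay+\log A$. Substituting the Cramer formulas gives the following coefficients:
\begin{align*}
  \log B:\quad & \frac{a'(a+c)-aa'}{D} \;=\; \frac{a'c}{D},\\
  \log C:\quad & \frac{-a'(a-b)+a(a'+b'+b)}{D} \;=\; \frac{a'b+a(b'+b)}{D}.
\end{align*}
The coefficients of $\log A$, $\log B$, $\log C$ sum to $1$. This follows by homogeneity: multiplying $A,B,C$ by a common $\lambda$ leaves $x,y$ unchanged and multiplies the common value by $\lambda$. Hence
\begin{align*}
  \beta=\frac{a'c}{D},\qquad
  \gamma=\frac{a'b+ab'+ab}{D},\qquad
  \alpha=1-\beta-\gamma=\frac{(b'+b)c}{D}.
\end{align*}
Each of these is positive because all parameters are positive. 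Since they sum to $1$, each also lies in $(0,1)$. This proves \eqref{l2-1-2}.

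\emph{Step 3: the identity \eqref{l2-1-1}.} Take $a'=2-d/q>0$ (positive because $q>d/2$), $a=d/q$, $b'=b=d$, $c=m$. Then
\begin{align*}
  2D \;=\; 2a'm+2a'd+4ad+4dm,
\end{align*}
and
\begin{align*}
  (d+2)a'm+(m+2)(a'd+2ad)\;=\;2a'dm+2a'm+2a'd+2adm+4ad.
\end{align*}
The left-hand side of \eqref{l2-1-1} is the second expression divided by $2D$. The difference of the two expressions equals $2dm(a'+a-2)$, which vanishes because $a+a'=2$. This gives \eqref{l2-1-1}.

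There is no genuine obstacle here. The only place requiring care is the sign bookkeeping in the Cramer computation of Step 2, which produces the positivity of $\alpha,\beta,\gamma$.
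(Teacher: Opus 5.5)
Your proposal is correct and follows essentially the same route as the paper: you choose $r$ and $R$ so that the three terms are equal, and you obtain the same exponents $\alpha=(b+b')c/\sigma$, $\beta=a'c/\sigma$, $\gamma=(a(b+b')+a'b)/\sigma$, where $\sigma$ coincides with your determinant $D$. You then verify \eqref{l2-1-1} through the same cancellation $2dm(a+a'-2)=0$; the log-linear system with Cramer's rule and the homogeneity argument for $\alpha+\beta+\gamma=1$ are just a cleaner way of doing the paper's explicit computation of $r$ and $R$.
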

%
%
\begin{proof}
  The proof is inspired by that of \cite[Lemma 2.10]{MO16}. To obtain an upper bound for the right-hand side of \eqref{l2-1-2}, we simply choose $r$ and $R$ so that the terms appearing in the sum are equal, that is,

  \begin{align*}
    r^{a'} R^{a} A
    \;=\;
    r^{-b'}\biggl(\frac{R}{r}\biggr)^{b} B
    \;=\;
    R^{-c} C.
  \end{align*}
  Note that the equality $r^{a'} R^{a} A = R^{-c} C$ implies $R = (C/r^{a'})^{1/(a+c)}$, while from the equality $r^{-b'} (R/r)^{b} B = R^{-c} C$ we deduce $R = (r^{b + b'} C/B)^{1/(b+c)}$. Combining those two equalities yields
  \begin{align}\label{l2-1-3}
    r
    \;=\;
    A^{-(b+c)/\sigma} B^{(a+c)/\sigma} C^{(b-a)/\sigma}
    \qquad \text{and} \qquad
    R
    \;=\;
    A^{-(b+b')/\sigma} B^{-a'/\sigma} C^{(a'+b'+b)/\sigma},
  \end{align}
  where $\sigma \ldef (b+b')c + a'c + a(b+b') + a'b$. Hence, by setting 
  \begin{align*}
    \alpha \;=\; \frac{(b+b')c}{\sigma},
    \qquad
    \beta \;=\; \frac{a'c}{\sigma},
    \qquad
    \gamma \;=\; \frac{a(b + b') + a'b}{\sigma}
  \end{align*}
  we clearly have that $\alpha, \beta, \gamma \in (0,1)$ and $\alpha + \beta + \gamma = 1$, which concludes the proof of \eqref{l2-1-3}. Moreover, in the particular case, where $a' = 2 - d/q$, $a = d/q$, $b = b' = d$ and $c = m$, it follows from an elementary computation that
  \begin{align*}
    \frac{d+2}{2}\, \beta + \frac{m+2}{2} \gamma
    &\;=\;
    1 - \alpha + \frac{d}{2} \beta + \frac{m}{2} \gamma
    \;=\;
    1 +
    \frac{%
      - 4bc + a'bc + 2abc + a'bc
    }{2 \sigma}
    \;=\;
    1.
  \end{align*}
\end{proof}
\begin{proof}[Proof of Theorem~\ref{thm:nash_int}]
  (i) First let us suppose that Assumption~\ref{a2-2} holds. Similarly as before, due to \eqref{a1-1-2} there exists $d' > d$ such that \eqref{p2-1-6} holds. Let $R_{1} \geq 2$, $C_{0} > 0$ be the constants associated with $d'$ in Assumption~\ref{a2-2}.

  We take $R \geq R_{1}$, $C_{0} R^{\alpha_{0}} \leq r \leq R$ and $x \in B(R)$ arbitrarily (where $\alpha_{0} \in (0, 1/2)$ is the same constant as in Assumption~\ref{a2-2}). Set $\rho = (1/d' + (p-1)/2p)^{-1}$, and note that $\rho < d'$ for any $p \in (1, \infty]$. In particular, \eqref{a2-2-3} implies that $\rho_{*} = 2p/(p-1)$. Then, by H\"older's inequality,
  %
  %
  \begin{align*}
    \Norm{f - (f)_{B(x, r), \theta}}{2, B(x, r), \theta}^{2}
    &\;=\;
    \inf_{a \in \mathbb{R}} \Norm{f - a}{2, B(x, r), \theta}^{2}
    \\[.5ex]
    &\;\leq\;
    \Norm{\theta}{\rho_{*}/(\rho_{*}-2), B(x, r)}\,
    \inf_{a \in \mathbb{R}} \Norm{f - a}{\rho_{*}, B(x, r)}^{2}
    \\[.5ex]
    &\;\leq\;
    \Norm{\theta}{\rho_{*}/(\rho_{*}-2), B(x, r)}\,
    \Norm{f - (f)_{B(x, r)}}{\rho_{*}, B(x, r)}^{2}.
  \end{align*}
  By using the volume regularity of balls, the Poincar\'{e}-Sobolev inequality as given in \eqref{a2-2-2} and H\"older's inequality, we obtain that
  \begin{align*}
    &\Norm{f - (f)_{B(x, r)}}{\rho_{*}, B(x, r)}^{2}
    \\[.5ex]
    &\mspace{36mu}\leq\;
    C_{\mathrm{S}}^{2}\, r^{2(1-d/d')}\, |B(x, r)|^{-2/\rho_{*}}\,
    \Biggl(
      \sum_{\substack{y, z \in B(x, \delta_{\mathrm{S}} r) \\ y \sim z}}
      \mspace{-15mu} \bigl| f(y) - f(z) \bigr|^{\rho}
    \Biggr)^{\!2/\rho}
    \\[.5ex]
    &\mspace{36mu}\leq\;
    c\, r^{d(2/d-1)}\,
    \Norm{\nu}{\rho/(\rho-2), B(x, \delta_{\mathrm{S}} n)}
    \sum_{\substack{y, z \in B(x, \delta_{\mathrm{S}} r) \\ y \sim z}}
    \mspace{-18mu}W(y, z) \bigl| f(y) - f(z) \bigr|^{2},
  \end{align*}
  where we used in the last step that $1/\rho_{*} = 1/\rho - 1/d'$. By combining the above estimates and using the fact that, in view of the definition of $\rho$ and $\rho_{*}$, the choice of $d'$ and \eqref{p2-1-6}, $\rho_{*} / (\rho_{*} - 2) = p$ and $\rho / (2-\rho) < q$, we get
  \begin{align}\label{eq:poincareTerm1}
    &\norm{f - (f)_{B(x, r), \theta}}{2}{B(x, r)}^{2}
    \nonumber\\[.5ex]
    &\mspace{36mu}\leq\;
    c\, r^{2}\, \Norm{\theta}{p, B(x, r)}\, \Norm{\nu}{q, B(x, r)}\,
    \sum_{\substack{y, z \in B(x, \delta_{\mathrm{S}} r) \\ y \sim z}}
    \mspace{-18mu}W(y, z) \bigl| f(y) - f(z) \bigr|^{2}.
  \end{align}
  Since $B(x,r) \subseteq B(2R)$, for any $1 \leq p_{1} \leq p_{2}$ we obtain by Jensen's inequality and the volume regularity as stated in \eqref{a2-2-1}
  \begin{align}\label{p2-2-5}
    \Norm{f}{p_{1}, B(x, r)}
    \;\leq\;
    \biggl( \frac{|B(2R)|}{|B(x, r)|} \biggr)^{\!\!1/p_{2}}\,
    \Norm{f}{p_{2},B(2R)}
    \;\leq\;
    c\, \biggl(\frac{R}{r}\biggr)^{d/p_{2}}\,
    \Norm{f}{p_{2}, B(2R)}.
  \end{align}
  Thus, from \eqref{eq:poincareTerm1} and the volume regularity \eqref{p2-2-5} we conclude that, for every $R \geq R_{1}$, $C_{0} R^{\alpha_{0}} \leq r \leq R$ and $x \in B(R)$,
  \begin{align}\label{p2-2-3a}
    &\norm{f - (f)_{B(x, r), \theta}}{2}{B(x, r)}^{2}
    \nonumber\\[.5ex]
    &\mspace{36mu}\leq\;
    c\, \cM\, r^{2} \biggl(\frac{R}{r}\biggr)^{\!d/p+d/q}
    \sum_{\substack{y, z \in B(x, \delta_{\mathrm{S}} r) \\ y \sim z}} \mspace{-15mu} W(y,z)\, \bigl|f(y)-f(z)\bigr|^{2},
  \end{align}
  %
  where $\cM$, still  defined as in \eqref{def:M}, is independent of $x$.
  
  On the other hand, by applying the Cauchy-Schwarz inequality and \eqref{p2-2-5},
  \begin{align*}
    1
    &\;\leq\;
    \Norm{\theta}{1, B(x, r)}\, \Norm{\theta^{-1}}{1, B(x, r)}
    \\[.5ex]
    &\;\leq\;
    c\, \biggl(\frac{R}{r}\biggr)^{\!d}\,
    \Norm{\theta}{1, B(x, r)}\, \Norm{\theta^{-1}}{1, B(2R)}
    \;\leq\;
    c\, \biggl(\frac{R}{r}\biggr)^{\!d} \cM\,
    \Norm{\theta}{1, B(x, r)},
  \end{align*}
  which implies that $\Norm{\theta}{1, B(x, r)} \geq c \cM^{-1} (R/r)^{-d}$ for every $R^{\alpha_{0}} \leq r \leq R$. Thus,
  \begin{align*}
    \bigl|(f)_{B(x,r), \theta}\bigr|^{2}\, \norm{\theta}{1}{B(x, r)}
    &\;=\;
    |B(x, r)|^{-1}\, \Norm{\theta}{1, B(x, r)}^{-1}\,
    \norm{f}{1}{B(x, r), \theta}^{2}
    \\[.5ex]
    &\;\leq\;
    c\, \mathcal{M}\, r^{-d} \biggl(\frac{R}{r}\biggr)^{\!d}\,
    \norm{f}{1}{B(x, r), \theta}^{2}.
  \end{align*}
  %
  %
  Using this and \eqref{p2-2-3a} we obtain that
  for every $R \geq R_{1}$, $C_{0} R^{\alpha_{0}} \leq r \leq R$ and $x \in B(R)$,
  \begin{align}\label{p2-2-3}
    \norm{f}{2}{B(x, r), \theta}^{2}
    &\;\leq\;
    2\, \norm{f - (f)_{B(x, r), \theta}}{2}{B(x, r), \theta}^{2}
    + 2\, \bigl|(f)_{B(x, r), \theta}\bigr|^{2}\, \norm{\theta}{1}{B(x, r)}
    \nonumber\\[.5ex]
    &\;\leq\;
    c\, \cM\,
    r^{2} \biggl(\frac{R}{r}\biggr)^{\!d/p+d/q}
    \sum_{\substack{y, z \in B(x, \delta_{\mathrm{S}} r) \\ y \sim z}} \mspace{-15mu} W(y,z)\, \bigl|f(y)-f(z)\bigr|^{2}
    \nonumber\\
    &\mspace{28mu}+
    c\, \cM\, r^{-d} \biggl(\frac{R}{r}\biggr)^{\!d}\,
    \norm{f}{1}{B(x, r), \theta}^{2}.
  \end{align}
  %

  Recall that
  \begin{align}\label{p2-2-6}
    \biggl(\sum_{i=1}^{N} |a_{i}|^{\zeta}\biggr)^{1/\zeta}
    \leq\;
    \sum_{i=1}^{N} |a_{i}|,
    \qquad
    \forall\, a_{i} \in \mathbb{R},\ N \in \mathbb{N},\ \zeta \geq 1.
  \end{align}
  Further, by the volume regularity condition \eqref{a2-2-1}, following the arguments in the proof of \cite[Theorem 2.1]{MO16}, there exists a positive integer $K_{0}$ such that for every $R \geq R_{1}$ and $C_{0} R^{\alpha_{0}} \leq r \leq R$, we can find a finite collection of balls $\{B(x_{i}, r)\}_{1 \leq i \leq N_{0}}$ with $x_{i} \in B(R)$ such that $B(R) \subset \bigcup_{i=1}^{N_{0}} B(x_{i}, r)$ and each point of $B(R)$ is covered by at most $K_{0}$ balls in $\{B(x_{i}, \delta_{\mathrm{S}} r)\}_{1 \leq i \leq N_{0}}$. In particular,
  \begin{align*}
    \sum_{i=1}^{N_{0}}
    \sum_{\substack{y, z \in B(x_{i}, \delta_{\mathrm{S}} r)\\ y \sim z}} \mspace{-18mu}W(y,z)\, \bigl|f(y) - f(z)\bigr|^{2}
    \;\leq\;
    c\, \cE(f),
  \end{align*}
  and by \eqref{p2-2-6},
  \begin{align*}
    \sum_{i=1}^{N_{0}}\;
    \norm{f}{1}{B(x_{i}, r), \theta}^{2}
    \;\leq\;
    \Biggl(
      \sum_{i=1}^{N_{0}}\; \norm{f}{1}{B(x_{i}, r), \theta}
    \Biggr)^{\!\!2}
    \;\leq\;
    c\, \norm{f}{1}{\boldsymbol{V}, \theta}^{2}.
  \end{align*}
  %
  %
  Hence, by using \eqref{p2-2-3}, we get for every $R \geq R_{1}$ and $C_{0} R^{\alpha_{0}} \leq r \leq R$,
  \begin{align*}
    \norm{f}{2}{B(R), \theta}^{2}
    &\;\leq\;
    \sum_{i=1}^{N_{0}}\, \norm{f}{2}{B(x_{i}, r), \theta}^{2}
    \\[.5ex]
    &\;\leq\;
    c\, \mathcal{M}
    \biggl(
      r^{2} \biggl(\frac{R}{r}\biggr)^{\!d/p+d/q} \cE(f)
      + r^{-d} \biggl(\frac{R}{r}\biggr)^{\!d/p+d/q}
      \norm{f}{1}{\boldsymbol{V}, \theta}^{2}
    \biggr).
  \end{align*}
  %
  Together with \eqref{p2-1-5} this estimate yields, for every $R \geq R_{1}$ and $C_{0} R^{\alpha_{0}} \leq r \leq R$,
  \begin{align}\label{p2-2-4}
    &\norm{f}{2}{\boldsymbol{V}, \theta}^{2}
    \;=\;
    \norm{f}{2}{B(R), \theta}^{2}
    + \sum_{x \not\in B(R)} |f(y)|^2\, \theta(y)
    \nonumber\\[.5ex]
    &\mspace{36mu}\leq\;
    c\, \cM
    \Biggl(
      r^2 \biggl(\frac{R}{r}\biggr)^{\!d/p+d/q} \cE(f)
      + r^{-d} \biggl(\frac{R}{r}\biggr)^{\!d}
      \norm{f}{1}{\boldsymbol{V}, \theta}^{2}
      + R^{-m} \norm{\eta^{m/2} f}{2}{\boldsymbol{V}, \theta}^{2}
    \Biggr).
  \end{align}
  %
   
  Further, by recalling that $1/p + 1/q < 2/d$, we have
  \begin{align*}
    r^{2} \biggl(\frac{R}{r}\biggr)^{\!d/p+d/q}
    \;=\;
    R^{2} \biggl(\frac{r}{R}\biggr)^{\!2-d/p-d/q}
    \;\geq\;
    R^{2},
    \qquad \forall\, R_{1} \leq R \leq r.
  \end{align*}
  Hence, by using this estimate together with \eqref{p2-1-4}, we immediately obtain that \eqref{p2-2-4} holds for every $r, R \in (0, \infty)$ with $R_{1} \leq R \leq r$.

  Finally, for every $r, R \in (0, \infty)$ with $R \geq R_{1}$ and $r \leq C_{0} R^{\alpha_{0}}$, we have
  \begin{align*}
    \norm{f}{2}{B(R), \theta}^{2}
    \;\leq\;
    \Bigl( \max_{y \in \boldsymbol{V}} \theta(y)^{-1} \Bigr)\,
    \norm{f}{1}{B(r), \theta}^{2}
    \;\leq\;
    c\, r^{-d} \biggl(\frac{R}{r}\biggr)^{\!d}
    \norm{f}{1}{B(R), \theta}^{2}.
  \end{align*}
  %
  Here we have used that $\inf_{y\in \boldsymbol{V}}\theta(y)\ge c$, \eqref{p2-2-6} and
  \begin{align*}
    r^{-d} \biggl(\frac{R}{r}\biggr)^{\!d}
    \;\geq\;
    C_{0}^{-2d} R^{d(1-2\alpha_{0})}
    \;\geq\;
    C_{0}^{-2d} R_{1}^{d(1-2\alpha_{0})}
    \;\geq\;
    C_{0}^{-2d}
  \end{align*}
  for all $r \leq C_{0} R^{\alpha_{0}}$ and $R \geq R_{1}$,
  where the last step exploits that $\alpha_{0} \in (0,1/2)$. Hence, \eqref{p2-2-4} holds for every $0 < r \leq C_{0} R_{1}$ and $R \geq R_{1}$.

  By combining all the above cases with \eqref{p2-1-5a} (for the case $0 \leq R \leq R_{1}$) we obtain that, for every $r, R \in (0, \infty)$,
  \begin{align}\label{p2-2-7}
    &\norm{f}{2}{\boldsymbol{V}, \theta}^{2}
    \nonumber\\[.5ex]
    &\mspace{18mu}\leq\;
    c\, R_{1}^{m}\, \cM
    \Biggl(
      r^{2} \biggl(\frac{R}{r}\biggr)^{\!d/p+d/q} \cE(f)
      + r^{-d} \biggl(\frac{R}{r}\biggr)^{\!d}
      \norm{f}{1}{\boldsymbol{V}, \theta}^{2}
      + R^{-m} \norm{\eta^{m/2} f}{2}{\boldsymbol{V}, \theta}^{2}
    \Biggr).
  \end{align}
  %
  %
  Thus, by applying Lemma~\ref{l2-1} with
  $A = \cE(f)$, $B = \norm{f}{1}{\boldsymbol{V}, \theta}^{2}$, $C = \norm{\eta^{m/2} f}{2}{\boldsymbol{V}, \theta}^2$ the assertion \eqref{p2-2-1} follows.
  \smallskip
  
  (ii) Now, suppose that Assumption~\ref{a2-2} holds for $R_{0} = R_{1} = 1$ and $\alpha_{0} = 0$. Note that under this assumption, \eqref{a2-2-1} and \eqref{a2-2-2} hold for every $R \geq 1$, $x \in B(R)$ and $1 \leq r \leq R$. Hence, by following the same procedure as in the proof of statement (i), \eqref{p2-2-7} holds for every $R$ and $r$ satisfying either $R \geq 2$, $r \geq 2$ or $R \leq 2$. Thus, it remains to verify \eqref{p2-2-7} for the regime $0 \leq r \leq 2\leq R$. Without the uniform lower bound on $\theta$, we have, for all $0 \leq r \leq 2 \leq R$,
  \begin{align*}
    \norm{f}{2}{B(R), \theta}^{2}
    \;\leq\;
    \norm{\theta^{-1}}{1}{B(R)}\, \norm{f}{1}{B(R), \theta}^{2}
    \;\leq\;
    c\, \mathcal{M}\, r^{-d} \biggl(\frac{R}{r}\biggr)^{\!d}
    \norm{f}{1}{B(R), \theta}^{2},
  \end{align*}
  %
  %
  where we applied the trivial bound $\theta(y')^{-1} \leq  \norm{\theta^{-1}}{1}{B(R)}$ for all $y' \in B(R)$ and \eqref{p2-2-6}. Hence, we obtain \eqref{p2-2-7} for all $r > 0$, $R > 0$ (with $R_{1} = 1$), which completes the proof of statement (ii).
\end{proof}
%
%
%
%
%

\section{Heat kernel estimates} \label{sec:hke}

\subsection{A general criterion for on-diagonal upper bounds}
In this subsection we first establish the following on-diagonal upper bound under a more abstract condition.

First we introduce some discrete calculus. In this paper we denote the collection of all oriented pairs of vertices on $\boldsymbol{V}$ by $\boldsymbol{\bar{E}}$, that is $\boldsymbol{\bar{E}} \ldef \{(y,z) : y,z \in \boldsymbol{V}\}$, where $y$ and $z$ stand for the starting vertex and terminal vertex for the pair $(y,z)$ respectively. Nothing of what will follow depends on the particular choice. For $f, g\colon \boldsymbol{\bar E} \rightarrow \bbR$ we write
\begin{align*}
  \scpr{f}{g}{\boldsymbol{\bar{E}}}
  \;\ldef\;
  \sum_{e \in \boldsymbol{\bar{E}}} f(e)\, g(e)
  \;=\;
  \sum_{x, y \in \boldsymbol{V}} f((x,y))\, g((x,y))
\end{align*}
for the $\ell^{2}$ inner product on $\boldsymbol{\bar{E}}$ with respect to the counting measure. For $f\colon \boldsymbol{V} \to \bbR$ and $F\colon \boldsymbol{\bar{E}} \to \bbR$ we define the operators $\nabla f\colon \boldsymbol{\bar{E}} \to \bbR$ and $\nabla^{*}F\colon \boldsymbol{V} \to \bbR$ by
\begin{align*}
  \nabla f(e)
  \;=\;
  \nabla f((e^+,e^-))
  \;\ldef\;
  f(e^+) - f(e^-),
  \quad
  \nabla^*F (x)
  \;\ldef
  \sum_{e: e^+ =\,x}\! F(e) \,-\! \sum_{e:e^-=\, x}\! F(e).
\end{align*}
Since $\sum_{e \in \boldsymbol{\bar{E}}} (\nabla f)(e) F(e) = \sum_{x \in \boldsymbol{V}} f(x) (\nabla^*F)(x)$ for all $f \in C_{c}(\boldsymbol{V})$ and $F \in C_{c}(\boldsymbol{E})$, $\nabla^*$ can be seen as the adjoint of $\nabla$.
\begin{prop}\label{p4-1}
  Suppose there exist $p, q \in (1, \infty]$ satisfying \eqref{a1-1-2} such that \eqref{a1-1-1} holds. Further, assume that the interpolated anchored Nash inequality \eqref{p2-2-1} holds, and there exist $m_{0} > d$ and $K_{1}, K_{2} \in (0,\infty)$ such that
  \begin{align}\label{l2-2-1}
    \mu_{2}(y)
    \;=\;
    \sum_{z \in \boldsymbol{V}} W(y,z)\, d(y,z)^{2}
    \;\leq\;
    K_{1}\, \theta(y),
    \qquad \forall\, y \in \boldsymbol{V},
  \end{align}
  and
  \begin{align}\label{l2-2-2}
    \sum_{\substack{z \in \boldsymbol{V} \\ d(y,z) > 2\eta(y)}} W(y,z)\, d(y,z)^{m_{0}}
    \;\leq\;
    K_{2}\,\eta^{m_{0}-2}(y)\, \theta(y),
    \qquad \forall\, y \in \boldsymbol{V}.
  \end{align}
  Then, there exists $C_{13} \in (0,\infty)$ such that, for all $t>0$,
  \begin{align}\label{t1-3-1}
    p(t,o,o)
    \;\leq\;
    C_{13}\, \Bigl(1 + C_{\mathrm{AN}}^{1/\beta}\Bigr)\,
    (1 + K_{1} + K_{2})^{m_{0}\gamma/(2\beta)}\, \cM^{1/\beta}\,
    \bigl(1 + \theta(o)^{-1}\bigr)^{(1-\alpha)/\beta}\, t^{-d/2},
  \end{align}
  where $\alpha, \beta, \gamma \in (0,1)$ are the same constants in Lemma \ref{l2-1} with $m = m_{0}$, $\cM$ is defined by \eqref{def:M}, and $C_{\mathrm{AN}}$ is the constant appearing in \eqref{p2-2-1}.
\end{prop}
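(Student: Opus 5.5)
We follow the Mourrat--Otto strategy. Set $u(t,x) \ldef p(t,o,x)$, so that $\partial_t u = \cL u$, $\norm{u(t)}{1}{\boldsymbol{V},\theta} \le 1$ and $\norm{u(t)}{2}{\boldsymbol{V},\theta}^2 = p(2t,o,o)$. We track the two quantities
\begin{align*}
  D(t) \ldef \norm{u(t)}{2}{\boldsymbol{V},\theta}^2
  \qquad\text{and}\qquad
  M(t) \ldef \norm{\eta^{m_0/2} u(t)}{2}{\boldsymbol{V},\theta}^2 .
\end{align*}
One has $D'(t) = -2\cE(u(t))$, and $D$ is non-increasing. Inserting the interpolated anchored Nash inequality \eqref{p2-2-1} with $m=m_0$ and $\norm{u}{1}{}\le 1$ gives
\begin{align*}
  D \le C_{\mathrm{AN}}\,\cM\, \cE(u)^\alpha M^{\gamma}.
\end{align*}
Hence $-D' \ge c\,(C_{\mathrm{AN}}\cM)^{-1/\alpha} D^{1/\alpha} M^{-\gamma/\alpha}$. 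This differential inequality can be integrated once a polynomial growth bound on $M(t)$ is available.

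\textbf{Growth bound for $M$.} We differentiate:
\begin{align*}
  M'(t) = 2\sum_x \eta^{m_0} u\,\cL u\,\theta = -\sum_{x,y} W(x,y)\,\nabla(\eta^{m_0}u)\nabla u .
\end{align*}
We write $\nabla(\eta^{m_0}u) = \overline{\eta^{m_0}}\,\nabla u + \bar u\,\nabla\eta^{m_0}$, with bars denoting edge averages. The first part is non-positive. The cross term is bounded by Cauchy--Schwarz, $|ab|\le \epsilon a^2+b^2/\epsilon$, using a weighted Dirichlet form. We split jumps into short jumps $d(x,y)\le 2\eta(x)$ and long jumps $d(x,y)>2\eta(x)$.
\begin{itemize}
  \item For short jumps, $|\nabla \eta^{m_0}| \lesssim d(x,y)\,\eta^{m_0-1}(x)$. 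The leftover term is then controlled by $\sum_x \eta^{m_0-2}u^2 \sum_y W(x,y)\,d(x,y)^2 \le K_1\sum_x \eta^{m_0-2}u^2\theta$, by \eqref{l2-2-1}.
  \item For long jumps, $|\nabla\eta^{m_0}| \lesssim d(x,y)^{m_0}$, and \eqref{l2-2-2} gives the same type of bound with $K_2$. A small piece of $\cE(u)$ is also produced.
\end{itemize}
Altogether,
\begin{align*}
  M' \le c(1+K_1+K_2)\,\norm{\eta^{(m_0-2)/2}u}{2}{\boldsymbol{V},\theta}^2 + c\,\cE(u).
\end{align*}
By H\"older interpolation between $\norm{u}{1}{\boldsymbol{V},\theta}\le 1$ and $M$, i.e.\ $\norm{\eta^{(m_0-2)/2}u}{2}{}^2 \le \norm{u}{2}{}^{4/m_0} M^{1-2/m_0}$ (or more crudely via $D$), and using $\int_0^t\cE\le D(0)/2$ (here $D(0)=\theta(o)^{-1}$, which produces the factor $(1+\theta(o)^{-1})$), we integrate. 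The initial value is $M(0)=\eta(o)^{m_0}\theta(o)^{-1}=\theta(o)^{-1}$. The expected outcome is
\begin{align*}
  M(t) \lesssim (1+\theta(o)^{-1})\bigl(1 + (1+K_1+K_2)^{m_0/2}\, t^{m_0/2}\, D\text{-factors}\bigr),
\end{align*}
i.e.\ the diffusive scaling $\eta\sim\sqrt t$. Making this ODE bootstrap precise, with the right powers and without circularity between $D$ and $M$, is the main obstacle. We would first try a crude bound $M(t)\lesssim (1+K_1+K_2)^{m_0/2}(1+t)^{m_0/2}(1+\theta(o)^{-1})$, obtained by a Gronwall-type argument on $M^{2/m_0}$.

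\textbf{Conclusion.} We plug the growth bound for $M$ into the differential inequality for $D$. Since $\alpha$, $\beta$, $\gamma$ satisfy \eqref{l2-1-1}, i.e.\ $\tfrac{d+2}{2}\beta+\tfrac{m_0+2}{2}\gamma=1$, integrating $\frac{d}{dt}D^{-(1-\alpha)/\alpha} \gtrsim M^{-\gamma/\alpha}$ yields exactly $D(t)\lesssim t^{-d/2}$. The constants are $C_{\mathrm{AN}}^{1/\beta}\cM^{1/\beta}(1+K_1+K_2)^{m_0\gamma/(2\beta)}(1+\theta(o)^{-1})^{(1-\alpha)/\beta}$, after raising to the power $\alpha/(1-\alpha)=\alpha/(\beta+\gamma)$ and tidying exponents. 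Finally, $p(t,o,o)=D(t/2)$ gives \eqref{t1-3-1}. Small times $t\lesssim 1$ are handled by $p(t,o,o)\le\theta(o)^{-1}$, which is absorbed into the same constant.
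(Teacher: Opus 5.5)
Your differential inequality for $M$ is sound. It is essentially Lemma~\ref{l2-2}, and the extra $c\,\cE(u)$ term is harmless, indeed unnecessary. The proof breaks at the step you flag as the main obstacle: you never obtain a growth bound on $M$ of the right order, and your fallback is too weak. From $(M^{2/m_0})'\lesssim (K_1+K_2)D^{2/m_0}\le (K_1+K_2)\theta(o)^{-2/m_0}$ you only get $M(t)\lesssim (1+t)^{m_0/2}$. Insert this into $\tfrac{d}{dt}D^{-(1-\alpha)/\alpha}\gtrsim M^{-\gamma/\alpha}$ and use $\alpha-\tfrac{m_0}{2}\gamma=\tfrac d2\beta$, which follows from \eqref{l2-1-1} and $\alpha+\beta+\gamma=1$. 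The result is only $D(t)\lesssim t^{-\frac d2\cdot\frac{\beta}{\beta+\gamma}}$, strictly weaker than $t^{-d/2}$ because $\gamma>0$. To reach $t^{-d/2}$ you need $M(t)\lesssim t^{(m_0-d)/2}$, which is circular: it presupposes $D(s)\lesssim s^{-d/2}$. Relatedly, ``raising to the power $\alpha/(1-\alpha)$'' yields constants with exponent $1/(1-\alpha)$. Nothing in your sketch produces the exponent $1/\beta$ in \eqref{t1-3-1}.

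The missing idea, which the paper takes from Mourrat--Otto, is to break the circularity with a running supremum $\Lambda_t\ldef\max\{1,\sup_{0\le s\le t}s^{d/2}D(s)\}$. This is finite a priori because $D(s)\le\theta(o)^{-1}$.
\begin{itemize}
\item Bound $D(s)\le\Lambda_t s^{-d/2}$ inside $\int_0^t D(s)^{2/m_0}\,ds$; this is integrable at $0$ because $m_0>d$. This gives $M(t)\lesssim(1+K_1+K_2)^{m_0/2}(1+\theta(o)^{-1})\Lambda_t\,t^{(m_0-d)/2}$ for $t\ge 1/4$.
\item Feed this into the Nash differential inequality and integrate from $1/4$, using that $\Lambda$ is non-decreasing. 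This gives $s^{d/2}D(s)\le C\,\Lambda_t^{\gamma/(1-\alpha)}$ for $1/2\le s\le t$.
\item Combined with $D\le\theta(o)^{-1}$ for small $s$, this yields $\Lambda_t\le C'\Lambda_t^{\gamma/(\beta+\gamma)}$.
\item Since $\gamma/(\beta+\gamma)<1$ and $\Lambda_t<\infty$, this self-improves to $\Lambda_t\le C'^{(\beta+\gamma)/\beta}$ uniformly in $t$. That absorption step is where the exponents $1/\beta$ come from.
\end{itemize}
Finally, for the full heat kernel with long-range jumps, $M(t)$ need not be finite a priori, so differentiating it is not yet justified. Run the argument for the Dirichlet heat kernel $p_{B(R)}(t,o,\cdot)$ with $R$-independent constants and let $R\to\infty$, as the paper does.
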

Proposition~\ref{p4-1} will be proven at the end of this subsection. One key ingredient will be the following lemma.
\begin{lemma}\label{l2-2}
  Suppose that the assumptions of Proposition~\ref{p4-1} hold. Given $R \geq 1$ and $o \in \boldsymbol{V}$, for every $t > 0$ we define $u\colon\boldsymbol{V} \to [0, \infty)$ as the Dirichlet heat kernel associated with $(X_{t})_{t \geq 0}$ on $B(R)$, that is,
  \begin{align*}
    u_{t}(y)
    \;\ldef\;
    p_{B(R)}(t, o, y)
    \;=\;
    \frac{\Prob_{\!o}\bigl[X_{t} = y, t < \tau_{B(R)}\bigr]}{\theta(y)},
    \qquad t > 0,\ y \in \boldsymbol{V}.
  \end{align*}
  Then there exists $C_{14} \in (0, \infty)$ such that for every $t > 0$,
  \begin{align}\label{eq:energy2}
    \partial_{t} \norm{\eta^{m_{0}/2} u_{t}}{2}{\boldsymbol{V}\!,\, \theta}^2
    \;\leq\;
    C_{14}\, (K_{1} + K_{2})\,
    \norm{\eta^{m_{0}/2} u_{t}}{2}{\boldsymbol{V}\!,\, \theta}^{2(1-2/m_{0})}\,
    \norm{u_{t}}{2}{\boldsymbol{V}\!,\, \theta}^{4/m_{0}}.
  \end{align}
\end{lemma}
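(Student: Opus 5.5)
Differentiating in time, I use that $u_t$ solves $\partial_t u_t = \cL u_t$ on $B(R)$ and vanishes outside $B(R)$. Since $B(R)$ is finite, all sums are finite, and writing $\phi \ldef \eta^{m_0}$ I get
\begin{align*}
  \partial_t \norm{\eta^{m_0/2} u_t}{2}{\boldsymbol{V}, \theta}^2
  \;=\; 2\sum_{y} \phi(y)\, u_t(y)\, \theta(y)\, \cL u_t(y)
  \;=\; -\sum_{y,z} W(y,z)\,\bigl(u(z)-u(y)\bigr)\bigl(\phi u(z) - \phi u(y)\bigr).
\end{align*}
Strictly, this identity uses $\cL u = \partial_t u$ only on $B(R)$. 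Outside $B(R)$ we have $u=0$, so the corresponding terms vanish and the symmetrisation still goes through. Next I write $\phi u(z)-\phi u(y) = \tfrac12(\phi(z)+\phi(y))(u(z)-u(y)) + \tfrac12(\phi(z)-\phi(y))(u(z)+u(y))$. The first part gives a nonpositive contribution, which I can discard or keep in order to absorb cross terms. This leaves
\begin{align*}
  \partial_t \norm{\eta^{m_0/2} u}{2}{\boldsymbol{V},\theta}^2 \;\le\; \sum_{y,z} W(y,z)\, \bigl|\phi(z)-\phi(y)\bigr|\, |u(z)-u(y)|\,\bigl(|u(y)|+|u(z)|\bigr) \;-\; \tfrac12\sum_{y,z}W(y,z)(\phi(y)+\phi(z))(u(z)-u(y))^2 .
\end{align*}
Rather than this, it is cleaner to use the standard trick $ab \le \epsilon a^2 + \epsilon^{-1}b^2$, with weight $\phi(y)+\phi(z)$, to absorb the gradient term completely. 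The upshot is
\[
  \partial_t \norm{\eta^{m_0/2}u}{2}{\boldsymbol{V},\theta}^2 \;\le\; c\sum_{y,z}W(y,z)\,\frac{(\phi(z)-\phi(y))^2}{\phi(y)+\phi(z)}\,\bigl(u(y)^2+u(z)^2\bigr),
\]
and by symmetry it suffices to bound $\sum_y u(y)^2 \sum_z W(y,z)\,(\phi(z)-\phi(y))^2/(\phi(y)+\phi(z))$.

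I now estimate the inner sum pointwise in $y$, splitting according to $d(y,z) \le 2\eta(y)$ or $d(y,z) > 2\eta(y)$. In the near region, $\eta(z) \le 3\eta(y)+1 \le 4\eta(y)$ by the triangle inequality for $d(o,\cdot)$. The mean value theorem then gives $|\phi(z)-\phi(y)| \le c\,\eta(y)^{m_0-1} d(y,z)$, so the ratio is at most $c\,\eta(y)^{m_0-2} d(y,z)^2$. Assumption \eqref{l2-2-1} then bounds this part by $cK_1\eta(y)^{m_0-2}\theta(y)$. In the far region, $\eta(z) \le \eta(y)+d(y,z) \le 2d(y,z)$, so the ratio is at most $\phi(z) \le c\, d(y,z)^{m_0}$. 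Assumption \eqref{l2-2-2} then bounds this part by $cK_2\eta(y)^{m_0-2}\theta(y)$.

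Combining the two regions gives
\[
  \partial_t \norm{\eta^{m_0/2}u}{2}{\boldsymbol{V},\theta}^2 \;\le\; C(K_1+K_2)\sum_y \eta(y)^{m_0-2}u(y)^2\theta(y).
\]
It remains to apply H\"older's inequality with exponents $m_0/(m_0-2)$ and $m_0/2$ to the measure $u^2\theta$:
\[
  \sum_y \eta^{m_0-2}u^2\theta \;\le\; \Bigl(\sum_y \eta^{m_0}u^2\theta\Bigr)^{1-2/m_0}\Bigl(\sum_y u^2\theta\Bigr)^{2/m_0}.
\]
This is exactly \eqref{eq:energy2}.

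The main obstacle I expect is the absorption step, that is, controlling the cross term containing $u(z)-u(y)$ by the negative Dirichlet-type term. The cleanest route is probably the algebraic inequality
\[
  (a-b)(\phi_a a-\phi_b b) \;\ge\; -\,c\,\frac{(\phi_a-\phi_b)^2}{\phi_a+\phi_b}\,(a^2+b^2),
\]
valid for $\phi_a,\phi_b>0$, which I would verify directly by completing the square. A secondary point is the rigorous justification of the time derivative at the boundary of $B(R)$, but finiteness of $B(R)$ makes this routine.
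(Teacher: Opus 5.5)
Your proof is correct. The overall skeleton matches the paper's: energy identity, discard a nonpositive Dirichlet term, reduce to $\sum_y u_t(y)^2\sum_z W(y,z)[\cdots]$, split into $d(y,z)\le 2\eta(y)$ and $d(y,z)>2\eta(y)$, apply \eqref{l2-2-1}, \eqref{l2-2-2} and H\"older. The key algebraic step, however, is different.

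The paper follows \cite{MO16}. It substitutes $v_t=\eta^{m_0/2-1}u_t$ and uses the exact identity
\[
-\nabla(\eta^{1+m_0/2}v)\,\nabla(\eta^{1-m_0/2}v)=-(\nabla(\eta v))^2+\mathrm{geo}(v)^2\bigl[(\nabla\eta)^2-\nabla\eta^{1+m_0/2}\,\nabla\eta^{1-m_0/2}\bigr].
\]
It drops the first term and bounds the bracket by the trapezoid estimate of Lemma~\ref{lem:a1}, obtaining $\frac{m_0^2}{4}\langle \mathrm{geo}(u_t)^2, W\,\mathrm{av}(\eta^{m_0-2})(\nabla\eta)^2\rangle$. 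Young's inequality on $u_t(y)u_t(z)$ and the bound $\eta^{m_0-2}(z)\le c\,\eta^{m_0-2}(y)+c\,d(y,z)^{m_0-2}\indicator_{\{d(y,z)>2\eta(y)\}}$ then give the same near and far sums that you control.

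You instead split $\phi u$ with arithmetic means and absorb the cross term by Cauchy--Schwarz with weight $\phi(y)+\phi(z)$; your algebraic inequality holds with $c=1/4$. You then do the near/far split directly on the ratio $(\phi(z)-\phi(y))^2/(\phi(y)+\phi(z))$ via the mean value theorem.

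What each route buys:
\begin{itemize}
\item The paper's exact identity mirrors \cite{MO16} and produces the explicit factor $m_0^2/4$.
\item Your route is shorter, does not use $u_t\ge 0$, and needs no appendix lemma.
\item Your route is also uniform in $m_0>2$. The trapezoid bound in Lemma~\ref{lem:a1} relies on convexity of $t\mapsto t^{m_0/2-1}$, i.e.\ $m_0\ge 4$. For $m_0\in(d,4)$, which is allowed when $d=3$, it holds only after enlarging the constant, e.g.\ via $\max\{a,b\}^{\alpha-1}\le a^{\alpha-1}+b^{\alpha-1}$. Your mean-value bound needs no such care.
\end{itemize}

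One step you left implicit: in the far region the ratio is at most $\phi(z)$ because there $\eta(z)\ge d(o,z)\ge d(y,z)-d(o,y)>\eta(y)$. Alternatively, bound the ratio by $\max\{\phi(y),\phi(z)\}\le c\,d(y,z)^{m_0}$.
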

\begin{proof}
  Setting $v_{t}(y) \ldef \eta^{m_{0}/2-1}(y) u_{t}(y)$, we have that $v_{t}$ is supported on $B(R)$ and, hence, $v_{t} \in \ell^{2}(\boldsymbol{V}\!, \theta)$. Since by definition $\supp u_{t} \subset B(R)$ and $u$ solves $\partial_{t} u_{t}(y) = (\cL u_{t})(y)$ for $t > 0$ and $y \in B(R)$, using the integration by parts formula we get
  \begin{align} \label{eq:proof_energy}
    \frac{1}{2}\, \partial_{t} \norm{\eta^{m_{0}/2} u_{t}}{2}{\boldsymbol{V}\!,\, \theta}^{2}
    &\;=\;
    -\scpr{\nabla(\eta^{m_{0}} u_{t})}{W \nabla u_t}{\bar{\boldsymbol{E}}}
    \nonumber\\[.5ex]
    &\;=\;
    - \scpr{\nabla(\eta^{m_{0}/2+1} v_{t})}{W \nabla(\eta^{1-m_{0}/2} v_{t})}{\bar{\boldsymbol{E}}}
    \nonumber\\[.5ex]
    &\;=\;
    - \scpr{\nabla(\eta v_{t})}{W \nabla (\eta v_{t})}{\boldsymbol{\bar{E}}}
    + \scpr{\mathop{\mathrm{geo}}(v_{t})^{2}}{W (\nabla \eta)^{2}}{ \boldsymbol{\bar{E}}}
    \nonumber \\
    &\mspace{30mu}
    - \scpr{\mathop{\mathrm{geo}}(v_{t})^{2}}{W (\nabla\eta^{1+m_{0}/2}) (\nabla \eta^{1-m_{0}/2})}{\boldsymbol{\bar{E}}}
    \nonumber\\[.5ex]
    &\;\leq\;
    \scpr{\mathop{\mathrm{geo}}(v_{t})^{2}}{ W (\nabla \eta)^{2}}{\boldsymbol{\bar{E}}}
    \nonumber \\
    &\mspace{30mu}
    - \scpr{\mathop{\mathrm{geo}}(v_{t})^{2}}{W (\nabla\eta^{1+m_{0}/2}) (\nabla \eta^{1-m_{0}/2})}{\boldsymbol{\bar{E}}},
  \end{align}
  where $\mathop{\mathrm{geo}}(v_t)((x,y)) \ldef (v_{t}(x) v_{t}(y))^{1/2}$ denotes the geometric average of the function $v_{t}$ at the vertices $x$ and $y$.

  Recall that $\av{f}((x,y)) \ldef \frac{1}{2}(f(x) + f(y))$. By Lemma~\ref{lem:a1}, the definition of $v_{t}$ and Jensen's inequality we obtain
  \begin{align*}
    &\scpr{\mathop{\mathrm{geo}}(v_{t})^{2}}{W (\nabla \eta)^{2}}{\boldsymbol{\bar{E}}}
    - \scpr{\mathop{\mathrm{geo}}(v_{t})^{2}}{W (\nabla \eta^{1+m_{0}/2}) (\nabla \eta^{1-m_{0}/2})}{\boldsymbol{\bar{E}}}
    \\[.5ex]
    &\mspace{36mu}\leq\;
    \frac{m_{0}^{2}}{4}\,
    \scpr{\mathop{\mathrm{geo}}(v_{t})^{2}}{
      W \mathop{\mathrm{geo}}(\eta^{1-m_{0}/2})^{2} \av{\eta^{m_{0}/2 -1}}^{2} (\nabla \eta)^{2}
    }{\boldsymbol{\bar{E}}}
    \\[.5ex]
    &\mspace{36mu}=\;
    \frac{m_{0}^{2}}{4}\,
    \scpr{\mathop{\mathrm{geo}}(u_{t})^{2}}{W \av{\eta^{m_{0}/2 -1}}^{2} (\nabla \eta)^{2}}{\boldsymbol{\bar{E}}}
    \\[.5ex]
    &\mspace{36mu}\leq\;
    \frac{m_{0}^{2}}{4}\,
    \scpr{\mathop{\mathrm{geo}}(u_{t})^{2}}{W \av{\eta^{m_{0}-2}} (\nabla \eta)^{2}}{\boldsymbol{\bar{E}}}.
  \end{align*}
  Using the symmetry of $W$ and Young's inequality, we get
  \begin{align*}
    &\scpr{\mathop{\mathrm{geo}}(u_{t})^{2}}{W \av{\eta^{m_{0}-2}} (\nabla \eta)^{2}}{\boldsymbol{\bar{E}}}
    \\[.5ex]
    &\mspace{36mu}=\;
    \frac{1}{4}\,
    \sum_{y, z \in \boldsymbol{V}} u_{t}(y)u_{t}(z)\bigl(\eta^{m_{0}-2}(y) + \eta^{m_{0}-2}(z)\bigr)
    \bigl(\eta(y) - \eta(z)\bigr)^{2} W(y,z)
    \\
    &\mspace{36mu}\leq\;
    c\, \sum_{y, z \in \boldsymbol{V}} u_{t}^{2}(y) \eta^{m_{0}-2}(y) \bigl(\eta(y) - \eta(z)\bigr)^{2} W(y,z)
    \\
    &\mspace{72mu}+
    c\, \sum_{y, z \in \boldsymbol{V}} u_{t}^{2}(y) \eta^{m_{0}-2}(z) \bigl(\eta(y) - \eta(z)\bigr)^{2} W(y,z)
    \\[.5ex]
    &\mspace{36mu}\rdef\;
    I_{1}(t) + I_{2}(t).
  \end{align*}
  By the definition of $\mu_{2}$ and the fact that $|\eta(y) - \eta(z)| \leq d(y,z)$ for any $y, z \in \boldsymbol{\bar{E}}$,
  \begin{align*}
    I_{1}(t)
    &\;\leq\;
    c\, \sum_{y \in \boldsymbol{V}} u_{t}^{2}(y)\, \eta^{m_{0}-2}(y)\, \mu_{2}(y)
    \\[.5ex]
    &\;\leq\;
    c\, K_{1}\, \sum_{y \in \boldsymbol{V}} u_{t}^{2}(y)\, \eta^{m_{0}-2}(y)\, \theta(y)
    \;\leq\;
    c\, K_{1}\,
    \norm{\eta^{m_{0}/2} u_{t}}{2}{\boldsymbol{V}\!,\, \theta}^{2(1-2/m_{0})}\,
    \norm{u_{t}}{2}{\boldsymbol{V}\!,\, \theta}^{4/m_{0}},
  \end{align*}
  where the second inequality is due to \eqref{l2-2-1}. Using again that $|\eta(y) - \eta(z)| \leq d(y,z)$,
  \begin{align*}
    \eta^{m_{0}-2}(z)
    \;=\;
    \bigl(\eta(z) - \eta(y) + \eta(y)\bigr)^{m_{0}-2}
    &\;\leq\;
    c\,\bigl(d(y,z)^{m_{0}-2} + \eta^{m_{0}-2}(y)\bigr)
    \\[.5ex]
    &\;\leq\;
    c\, \eta^{m_{0}-2}(y) + c\, d(y,z)^{m_{0}-2}\, \indicator_{\{d(y,z) > 2\eta(y)\}}.
  \end{align*}
  Hence,
  \begin{align*}
    I_2(t)
    &\;\leq\;
    c\, \sum_{y \in \boldsymbol{V}} u_{t}^{2}(y)
    \sum_{z \in \boldsymbol{V}} W(y,z)\, d(y,z)^{m_{0}}\, \indicator_{\{d(y,z) > 2\eta(y)\}}
    \\
    &\mspace{28mu}
    + c\, \sum_{y \in \boldsymbol{V}} u_{t}^{2}(y)\, \eta^{m_{0}-2}(y)
    \sum_{z \in \boldsymbol{V}} W(y,z)\, d(y,z)^{2}
    \\[.5ex]
    &\;\rdef\;
    I_{21}(t) + I_{22}(t).
  \end{align*}
  Following the same arguments as above for $I_{1}(t)$ we get
  \begin{align*}
    I_{22}(t)
    \;\leq\;
    c\, K_{1}\, \norm{\eta^{m_{0}/2} u_{t}}{2}{\boldsymbol{V}\!,\, \theta}^{2(1-2/m_{0})}\,
    \norm{u_{t}}{2}{\boldsymbol{V}\!,\, \theta}^{4/m_{0}}.
  \end{align*}
  Further, applying condition \eqref{l2-2-2} and H\"older's inequality gives that
  \begin{align*}
    I_{21}(t)
    &\;\leq\;
    c\, K_{2}\, \sum_{y \in \boldsymbol{V}} u_{t}^{2}(y)\, \eta^{m_{0}-2}(y)\, \theta(y)
    \;\leq\;
    c\, K_{2}\, \norm{\eta^{m_{0}/2} u_{t}}{2}{\boldsymbol{V}\!,\, \theta}^{2(1-2/m_{0})}\,
    \norm{u_{t}}{2}{\boldsymbol{V}\!,\, \theta}^{4/m_{0}}.
  \end{align*}
  Combining the above estimates for $I_{1}(t)$ and $I_{2}(t)$ with \eqref{eq:proof_energy} yields \eqref{eq:energy2}.
\end{proof}
\begin{proof}[Proof of Proposition~\ref{p4-1}]
  Given Lemma~\ref{l2-2} the proof is similar to that of \cite[Theorem~3.2]{MO16}. For the reader's convenience we provide details here.

  For any $R \geq 1$, let $u_{t}(y) \ldef p_{B(R)}(t, o, y)$ be as before. Further, set
  \begin{align*}
    U_{t} \ldef \norm{u_{t}}{2}{\boldsymbol{V}\!,\, \theta}^{2}
    \qquad \text{and} \qquad
    V_{t} \ldef \norm{\eta^{m_{0}/2} u_{t}}{2}{\boldsymbol{V}\!,\, \theta}^{2},
  \end{align*}
  for any $t \in [0, \infty)$ and denote by $U_{t}'$ and $V_{t}'$ the derivatives with respect to $t$ of the mappings $t \mapsto U_{t}$ and $t \mapsto V_{t}$, respectively. Throughout this proof, all the constants $c$ appearing below will be independent of $R$.

  First note that by general properties of the heat kernel we have $U_{t}' = -2 \cE(u_{t})$, and by applying \eqref{eq:energy2} we get
  \begin{align}\label{t1-2-2}
    V_{t}'
    \;\leq\;
    c\, (K_{1} + K_{2})\, V_{t}^{1-2/m_{0}}\, U_{t}^{2/m_{0}},
    \qquad t > 0.
  \end{align}
  Now, we set $\Lambda_{t} \ldef \max\{1, \sup_{0 \leq s \leq t} s^{d/2} U_{s}\}$. Then, noting that $V_{0} = \theta(o)^{-1}$ and $m_{0} > d$, by integrating \eqref{t1-2-2}, we obtain
  \begin{align*}
    V_{t}^{2/m_{0}}
    &\;\leq\;
    V_{0}^{2/m_{0}} + c\, (K_{1} + K_{2})\, \Lambda_{t}^{2/m_{0}}\,
    \int_{0}^{t} s^{-d/m_{0}}\, \mathrm{d}s
    \\[.5ex]
    &\;\leq\;
    c\, (1 + K_{1} + K_{2})\,
    \Bigl(
      \theta(o)^{-2/m_{0}} + \Lambda_{t}^{2/m_{0}}\, t^{1 - d/m_{0}}
    \Bigr).
  \end{align*}
  Thus,
  \begin{align}\label{t1-2-3}
    V_{t}
    \;\leq\;
    c\, (1 + K_{1} + K_{2})^{m_{0}/2}\,
    \bigl(1 + \theta(o)^{-1}\bigr)\, \Lambda_{t}\, t^{(m_{0}-d)/2},
    \qquad \forall\, t \geq 1/4.
  \end{align}
  Further, by using the interpolated anchored Nash inequality \eqref{p2-2-1} together with the fact that $\norm{u_{t}}{1}{\boldsymbol{V}\!,\, \theta} \leq 1$ and $U_{t}' = -2\cE(u_{t})$, we have
  \begin{align*}
    U_{t}
    \;\leq\;
    C_{\mathrm{AN}}\, \cM\, \cE\big(u_{t}\big)^{\alpha}\, V_{t}^{\gamma}
    \;=\;
    -2^{-\alpha} C_{\mathrm{AN}}\, \cM\, \bigl(U_{t}'\bigr)^{\alpha}\, V_{t}^{\gamma}.
  \end{align*}
  Combining this with \eqref{t1-2-3} gives that, for all $t \geq 1/4$,
  \begin{align*}
    -U_{t}'
    \;\geq\;
    c\,
    \Bigl(
      C_{\mathrm{AN}}\, (1 + K_{1} + K_{2})^{m_{0}\gamma/2}
      \bigl(1 + \theta(o)^{-1}\bigr)^{\gamma} \cM
    \Bigr)^{\!-1/\alpha} U_{t}^{1/\alpha} \Lambda_{t}^{-\gamma/\alpha}\, t^{-\gamma(m_{0}-d)/(2\alpha)}\!.
  \end{align*}
  %
  Integrating this inequality and using that $\Lambda_{t}$ is increasing in $t$, we get for all $t \geq 1/2$,
  \begin{align}\label{t1-2-4}
    U_{t}^{1-1/\alpha}
    &\;\geq\;
    U_{t}^{1-1/\alpha} - U_{1/4}^{1-1/\alpha}
    \nonumber\\[.5ex]
    &\;\geq\;
    c\,
    \Bigl(
      C_{\mathrm{AN}}\, (1 + K_{1} + K_{2})^{m_{0}\gamma/2}
      \bigl(1 + \theta(o)^{-1}\bigr)^{\gamma} \cM
    \Bigr)^{\!-1/\alpha}
    \Lambda_{t}^{-\gamma/\alpha}\, t^{1-\gamma(m_{0}-d)/(2\alpha)}
    \nonumber\\[.5ex]
    &\;=\;
    c\,
    \Bigl(
      C_{\mathrm{AN}}\, (1 + K_{1} + K_{2})^{m_{0}\gamma/2}
      \bigl(1 + \theta(o)^{-1}\bigr)^{\gamma} \cM
    \Bigr)^{\!-1/\alpha}
    \Lambda_{t}^{-\gamma/\alpha}\, t^{d(1-\alpha)/(2\alpha)}.
  \end{align}
  %
  %
  In the last step we have used the fact
  \begin{align*}
    1 - \frac{\gamma(m_{0}-d)}{2\alpha}
    &\;=\;
    \frac{1}{\alpha}\biggl(1 - \beta - \gamma - \frac{\gamma(m_{0}-d)}{2}\biggr)
    \\
    &\;=\
    \frac{1}{\alpha}\biggl(\frac{d\beta}{2} + \frac{m_{0}\gamma}{2} - \frac{\gamma(m_{0}-d)}{2}\biggr)
    \;=\;
    \frac{d(1-\alpha)}{2\alpha}
    \;>\;
    0,
  \end{align*}
  which follows from \eqref{l2-1-1} and that $\alpha + \beta + \gamma = 1$. From \eqref{t1-2-4} and the fact that $t \mapsto \Lambda_{t}$ is increasing we deduce
  \begin{align*}
    \sup_{1/2 \leq s\leq t} s^{\frac{d}{2}} U_{s}
    \;\leq\;
    c \,
    \Bigl(
      C_{\mathrm{AN}}\, (1 + K_{1} + K_{2})^{m_{0}\gamma/2}
      \bigl(1 + \theta(o)^{-1}\bigr)^{\gamma} \cM
    \Bigr)^{\!1/(1-\alpha)}\,
    \Lambda_t^{\gamma/(1-\alpha)}.
  \end{align*}
  Combining this with the fact that $U_{t} = p_{B(R)}(2t, o, o) \leq \theta(o)^{-1}$ we get, for all $t > 0$,
  \begin{align*}
    \sup_{0 \leq s \leq t} s^{d/2} U_{s}
    \,\leq\,
    c\bigl(1 + C_{\mathrm{AN}}^{1/(1-\alpha)}\bigr)
    \Bigl(
      (1 + K_{1} + K_{2})^{m_{0}\gamma/2} \cM
    \Bigr)^{\!1/(1-\alpha)}\!
    \bigl(1 + \theta(o)^{-1}\bigr) \Lambda_{t}^{\gamma/(1-\alpha)}\!.
  \end{align*}
  By the definition of $\Lambda_{t}$ and the fact that $\beta > 0$, this implies
  \begin{align*}
    \Lambda_{t}
    &\leq\;
    c\bigl(1 + C_{\mathrm{AN}}^{1/\beta}\bigr)
    \Bigl(
      (1 + K_{1} + K_{2})^{m_{0}\gamma/2} \cM
    \Bigr)^{\!1/\beta}
    \bigl(1 + \theta(o)^{-1}\bigr)^{(1-\alpha)/\beta}.
  \end{align*}
  Hence, by using that $U_{t} \leq \Lambda_{t} t^{-d/2}$, we obtain, for all $t \geq 1/2$,
  \begin{align*}
    p_{B(R)}(2t,o,o)
    \;\leq\;
    c\bigl(1 + C_{\mathrm{AN}}^{1/\beta}\bigr)
    \Bigl(
      (1 + K_{1} + K_{2})^{m_{0}\gamma/2} \cM
    \Bigr)^{\!1/\beta}
    \bigl(1 + \theta(o)^{-1}\bigr)^{(1-\alpha)/\beta}\, t^{-d/2}.
  \end{align*}
  %
  %
  Recalling that all the appearing constants $c$ are independent of $R$, by taking $R \to \infty$ finishes the proof of \eqref{t1-3-1} for $t \geq 1$. Finally, since $p(t, o, o) \leq \theta(o)^{-1}$, it is easy follows that \eqref{t1-3-1} also holds for $t \in (0,1)$. This completes the proof of \eqref{t1-3-1}.
\end{proof}

\subsection{Proof of Theorem~\ref{t1-1}}
We start by observing that the stronger integrability condition \eqref{a1-4-1} is sufficient for \eqref{l2-2-2} to hold.
\begin{lemma}\label{l4-1}
  Suppose that \eqref{a2-2-1} holds and there exist $m_{0} > d$ and $p, q \in (1, \infty]$ satisfying
  \begin{align}\label{r1-1-2}
    \frac{1}{p} + \frac{1}{q} \;\leq\; \frac{m_{0}-2}{d},
  \end{align}
  such that
  \begin{align}\label{r1-1-1}
    \sup_{R \geq 1} \Norm{\theta^{-1}}{p, B(R)}
    + \sup_{R \geq 1}\Norm{\mu_{m_{0}}}{q, B(R)}
    \;<\;
    \infty.
  \end{align}
  Then, there exists $C_{15} \in (0, \infty)$ such that
  \begin{align*}
    \sum_{\substack{z \in \boldsymbol{V} \\ d(y,z) > 2\eta(y)}}
    \mspace{-18mu}W(y, z)\, d(y, z)^{m_{0}}
    \;\leq\;
    C_{15}\, R_{0}^{1/p+1/q} \widetilde{\cM}^2\, \eta^{m_{0}-2}(y)\, \theta(y),
    \qquad \forall\, y \in \boldsymbol{V},
  \end{align*}
  with $\widetilde{\cM} \ldef \sup_{R \geq 1} \bigl(\Norm{\theta^{-1}}{p, B(R)} + \Norm{\mu_{m_{0}}}{q, B(R)}\bigr)$ and $R_{0} \geq 1$ being the constant in Assumption \ref{a2-2}.
\end{lemma}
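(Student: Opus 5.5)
The plan is to bound the left-hand side crudely by the full moment $\mu_{m_{0}}(y)$ and then convert the averaged integrability \eqref{r1-1-1} into pointwise polynomial bounds in $\eta(y)$. The restriction $d(y,z) > 2\eta(y)$ is simply dropped:
\begin{align*}
  \sum_{\substack{z \in \boldsymbol{V} \\ d(y,z) > 2\eta(y)}} \mspace{-18mu} W(y,z)\, d(y,z)^{m_{0}}
  \;\leq\;
  \mu_{m_{0}}(y)
  \;=\;
  \mu_{m_{0}}(y)\, \theta(y)^{-1}\, \theta(y).
\end{align*}
It therefore suffices to show that
\begin{align*}
  \mu_{m_{0}}(y) \;\leq\; c\, R_{0}^{\,\cdot}\, \widetilde{\cM}\, \eta(y)^{d/q}
  \qquad\text{and}\qquad
  \theta(y)^{-1} \;\leq\; c\, R_{0}^{\,\cdot}\, \widetilde{\cM}\, \eta(y)^{d/p}.
\end{align*}
Multiplying these two bounds and using \eqref{r1-1-2} together with $\eta \geq 1$ gives $\eta(y)^{d/p+d/q} \leq \eta(y)^{m_{0}-2}$, which is the claim with $\widetilde{\cM}^{2}$.

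The pointwise bounds come from the trivial estimate of one summand by the sum. Set $r \ldef \max\{\eta(y), R_{0}\}$, so that $y \in B(r)$. Then
\begin{align*}
  \mu_{m_{0}}(y)^{q}
  \;\leq\;
  \sum_{x \in B(r)} \mu_{m_{0}}(x)^{q}
  \;=\;
  |B(r)|\, \Norm{\mu_{m_{0}}}{q, B(r)}^{q}
  \;\leq\;
  |B(r)|\, \widetilde{\cM}^{q}.
\end{align*}
The same argument with $\theta^{-1}$ and exponent $p$ gives $\theta(y)^{-p} \leq |B(r)|\, \widetilde{\cM}^{p}$. If $q$ or $p$ equals $\infty$, the bound is immediate from the sup-norm.

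The volume of $B(r)$ is controlled by the upper bound in \eqref{a2-2-1}, applied with $x = o$ and $R = r$. This is admissible since $r \geq R_{0}$ and $r^{\alpha_{0}} \leq r$, and it yields $|B(r)| \leq C_{\mathrm{reg}}\, r^{d}$. Hence $\mu_{m_{0}}(y) \leq c\, r^{d/q}\, \widetilde{\cM}$ and $\theta(y)^{-1} \leq c\, r^{d/p}\, \widetilde{\cM}$.

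The only delicate point is the small-distance regime $\eta(y) < R_{0}$, where \eqref{a2-2-1} cannot be applied at scale $\eta(y)$. There one uses the enlarged ball $B(R_{0})$, which is why $r$ was taken as $\max\{\eta(y), R_{0}\}$. In this regime $r^{d/p+d/q} = R_{0}^{d/p+d/q} \leq R_{0}^{d/p+d/q}\, \eta(y)^{m_{0}-2}$. In the regime $\eta(y) \geq R_{0}$, the bound $r^{d/p+d/q} \leq \eta(y)^{m_{0}-2}$ holds directly. Combining the two cases produces a constant that depends on $R_{0}$ only through a power of $R_{0}$ determined by $1/p + 1/q$, which is the factor displayed in the lemma. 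Bookkeeping this power of $R_{0}$ is, in my view, the main (though minor) thing to check; everything else is a direct Hölder/counting estimate.
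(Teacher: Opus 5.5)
Your proposal is correct and is essentially the paper's own proof. Both drop the constraint $d(y,z)>2\eta(y)$ to bound the sum by $\mu_{m_0}(y)$, then convert the averaged norms into pointwise bounds $\mu_{m_0}(y)\lesssim \widetilde{\cM}\,\max\{\eta(y),R_0\}^{d/q}$ and $\theta(y)^{-1}\lesssim \widetilde{\cM}\,\max\{\eta(y),R_0\}^{d/p}$ using the volume upper bound, and conclude with \eqref{r1-1-2}.

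On your bookkeeping remark: your argument, like the paper's (via $\max\{\eta(y),R_0\}\le R_0\,\eta(y)$), gives the factor $R_0^{d(1/p+1/q)}$, not the displayed $R_0^{1/p+1/q}$. That exponent is a slip in the statement, not something your argument delivers.
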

\begin{proof}
  Since $\sup_{y \in B(R)} \mu_{m_{0}}(y) \leq |B(R)|^{1/q} \Norm{\mu_{m_{0}}}{q, B(R)}$ for any $R \geq 1$, we have by the volume regularity condition \eqref{a2-2-1} (note that \eqref{a2-2-1} also implies that $|B(R)| \leq c \max\{R^{d}, R_{0}^{d}\}$ for every $R \geq 1$), that for all $y \in \boldsymbol{V}$,
  \begin{align}\label{r1-1-3}
    \mu_{m_{0}}(y)
    \;\leq\;
    \sup_{z \in B(\eta(y))} \mu_{m_{0}}(z)
    &\;\leq\;
    c\, \max\bigl\{\eta^{d/q}(y), R_{0}^{d/q}\bigr\}\,
    \sup_{R \geq 1}\, \Norm{\mu_{m_{0}}}{q, B(R)}
    \nonumber\\[.5ex]
    &\;\leq\;
    c\, R_{0}^{d/q}\, \eta^{d/q}(y)\,
    \sup_{R \geq 1}\, \Norm{\mu_{m_{0}}}{q, B(R)},
  \end{align}
  where we used in the last step that $\max\{\eta(y), R_{0}\} \leq R_{0} \eta(y)$ for every $y \in \boldsymbol{V}$ by the definition of $\eta$. Hence, by the definition of $\mu_{m_{0}}$ we get
  \begin{align*}
    \sum_{\substack{z \in \boldsymbol{V} \\ d(y,z) > 2\eta(y)}}
    \mspace{-18mu}W(y, z)\, d(y, z)^{m_{0}}
    \;\leq\;
    \mu_{m_{0}}(y)
    \;\le\;
    c\, \widetilde{\cM}\, R_{0}^{d/q}\, \eta^{d/q}(y).
  \end{align*}
  On the other hand, a similar argument as for \eqref{r1-1-3} gives
  \begin{align*}
    \theta^{-1}(y)
    \;\leq\;
    c\, \max\bigl\{\eta^{d/p}(y), R_{0}^{d/p}\bigr\}\,
    \sup_{R \geq 1}\, \Norm{\theta^{-1}}{p, B(R)}\, 
    \;\leq\;
    c\, \widetilde{\cM}\, R_{0}^{d/p}\, \eta^{d/p}(y),
    \qquad y \in \boldsymbol{V}.
  \end{align*}
  By combining the above estimates and using \eqref{r1-1-2} we obtain that, for all $y \in \boldsymbol{V}$,
  \begin{align*}
    \sum_{\substack{z \in \boldsymbol{V} \\ d(y,z) > 2\eta(y)}}
    \mspace{-18mu}W(y, z)\, d(y, z)^{m_{0}}
    &\;\leq\;
    c\, \widetilde{\cM}^{2}\, R_{0}^{d/p+d/q}\, \eta^{d/p+d/q}(y)\, \theta(y)
    \\[-1.5ex]
    &\;\leq\;
    c\, \widetilde{\cM}^{2}\, R_{0}^{d/p+d/q}\, \eta^{m_{0}-2}(y)\, \theta(y),
  \end{align*}
  which completes the proof.
\end{proof}

Now, we introduce a time-change $(Y_{t})_{t \geq 0}$ of $(X_{t})_{t \geq 0}$ which satisfies the conditions \eqref{l2-2-1} and \eqref{l2-2-2}. More precisely, let $(\cE, C_{c}(\boldsymbol{V}))$ be the quadratic form on $C_{c}(\boldsymbol{V})$ defined by \eqref{e1-3}. Given $m_{0} > d$, let $\pi_{m_{0}}$ be the measure on $\boldsymbol{V}$ given by
\begin{align}\label{e4-3}
  \pi_{m_{0}}(x)
  \;\ldef\;
  \max\bigl\{1, \mu_{m_{0}}(x), \theta(x)\bigr\},
  \qquad  x \in \boldsymbol{V},
\end{align}
where $\mu_{m_{0}}(x)$ is defined in \eqref{def:mu_nu}. As explained before, we can extend $(\cE, C_{c}(\boldsymbol{V}))$ to a regular Dirichlet form $(\cE_{m_{0}}, \cD(\cE_{m_{0}}))$ on $L^{2}(\boldsymbol{V}, \pi_{m_{0}})$ with reference measure $\pi_{m_{0}}$. Then, there exists a Hunt process $((Y_{t})_{t \geq 0}, \{\Prob_{\!x}\}_{x \in \boldsymbol{V}})$ associated with $(\cE_{m_{0}}, \cD(\cE_{m_{0}}))$ whose infinitesimal generator
$\cL_{Y}$ acts on $f \in C_{c}(\boldsymbol{V})$ as
\begin{align*}
  \bigl(\cL_{Y}f\bigr)(x)
  \;\ldef\;
  \frac{1}{\pi_{m_{0}}(x)}\, \sum_{y \in \boldsymbol{V}}\, W(x,y) \bigl(f(y) - f(x)\bigr).
\end{align*}
Thus, the process $(Y_t)_{t \geq 0}$ is reversible with respect to $\pi_{m_{0}}$. We write
\begin{align*}
  q(t, x, y)
  \;=\;
  \frac{\Prob_{\!x}\bigl[Y_{t} = y\bigr]}{\pi_{m_{0}}(y)}
\end{align*}
for the associated heat kernel. Furthermore, the two processes $(X_{t})_{t \geq 0}$ and $(Y_{t})_{t \geq 0}$ are time-changes of each other. More precisely, setting
\begin{align} \label{eq:def_A}
  A_{t}
  \;\ldef\;
  \int_{0}^{t} \frac{\theta(Y_{s})}{\pi_{m_{0}}(Y_{s})}\, \mathrm{d}s,
  \qquad t \geq 0,
\end{align}
we have that $(X_{t})_{t \geq 0} \overset{d}{=} (Y_{A_{t}^{-1}})_{t \geq 0}$, where $A_{t}^{-1} \ldef \inf\{s \geq 0 : A_{s} > t\}$, $t \geq 0$, denotes the right-continuous inverse of the additive functional $(A_{t})_{t \geq 0}$.
\begin{remark}
  Note that the inverse functional $A^{-1}$ is more explicitly given by
  \begin{align} \label{eq:Ainv}
    A_{t}^{-1}
    \;=\;
    \int_{0}^{t} \frac{\pi_{m_{0}}(X_{s})}{\theta(X_{s})}\, \mathrm{d}s,
    \qquad t \geq 0.
  \end{align}
  Indeed, by a change of variables $r = A_{s}^{-1}$ we obtain
  \begin{align*}
    \int_{0}^{t} \frac{\pi_{m_{0}}(X_{s})}{\theta(X_{s})}\, \mathrm{d}s
    \;=\;
    \int_{0}^{t} \frac{\pi_{m_{0}}(Y_{A^{-1}_{s}})}{\theta(Y_{A^{-1}_{s}})}\, \mathrm{d}s
    \;=\;
    \int_{0}^{A^{-1}_{t}}
      \frac{\pi_{m_{0}}(Y_{r})}{\theta(Y_{r})}\,
      \frac{\theta(Y_{r})}{\pi_{m_{0}}(Y_{r})}\,
    \mathrm{d}r
    \;=\;
    A_{t}^{-1},
  \end{align*}
  noting that $\mathrm{d}s = A'_{r}\, \mathrm{d}r = \theta(Y_{r})/ \pi_{m_{0}}(Y_{r})\, \mathrm{d}r$.
\end{remark}
\begin{proof}[Proof of Theorem \ref{t1-1}]
  (i) Recall that $\pi_{m_{0}}$, defined in \eqref{e4-3}, is the reference measure for the Dirichlet form $(\cE_{m_{0}}, \cD(\cE_{m_{0}}))$, so $\pi_{m_{0}}$ a reversible measure for the random walk $(Y_{t})_{t \geq 0}$. Let $\nu\colon \boldsymbol{V} \to [0, \infty)$ be still defined as in \eqref{def:mu_nu}. By the definition of $\mu_{m_{0}}$ and the fact that $m_{0} > d \geq 2$ we have $\mu_{2} \leq \mu_{m_{0}} \leq \pi_{m_{0}}$, which implies that \eqref{l2-2-1} holds for $\theta = \pi_{m_{0}}$ with $K_{1} = 1$. Moreover, Lemma~\ref{l4-1} gives that condition \eqref{l2-2-2} holds for $\theta = \pi_{m_{0}}$ with $K_{2} = C_{15} R_{0}^{1/p+1/q} \sup_{R \geq 1} \Norm{\mu_{m_{0}}}{p, B(R)}$ for $p \in \bigl(\max\{1,\frac{d}{m_{0}-2}\}, \infty\bigr]$ (here we used that $\pi_{m_{0}} \geq 1$). On the other hand, since $\pi_{m_{0}}^{-1} \leq \nu$, it follows directly from the integrability condition \eqref{a1-4-1} that condition \eqref{a1-1-1} holds for the choice $\theta = \pi_{m_{0}}$. Therefore, according to Theorem~\ref{thm:nash_int}, the interpolated Nash inequality \eqref{p2-2-1} holds for the reference measure $\theta = \pi_{m_{0}}$. Hence, the assumptions of Proposition~\ref{p4-1} hold for the choice $\theta = \pi_{m_{0}}$, which gives
  \begin{align}\label{t1-1-3}
    q(t, o, o)
    &\;=\;
    \frac{\Prob_{\!o}\bigl[Y_{t} = x\bigr]}{\pi_{m_{0}}(o)}
    \nonumber\\[.5ex]
    &\;\leq\;
    c\,
    \min\Bigl\{
      \pi_{m_{0}}(o)^{-1},
      R_{0}^{(1/p+1/q)m_{0}\gamma/(2\beta)}\, R_{1}^{m_{0}/\beta}\, \cM_{0}^{(1+m_{0}\gamma)/\beta}\,
      t^{-d/2}
    \Bigr\},
    \nonumber\\[.5ex]
    &\;\leq\;
    c\,
    \min\Bigl\{
      1,
      R_{0}^{(1/p+1/q)m_{0}\gamma/(2\beta)}\, R_{1}^{m_{0}/\beta}\, \cM_{0}^{(1+m_{0}\gamma)/\beta}\,
      t^{-d/2}
    \Bigr\}
  \end{align}
  for any $t > 0$, where we have used that $\cM \leq c \cM_{0}$ (here $\cM$ is associated with the reference measure $\pi_{m_{0}}$), which in turn follows from $\pi_{m_{0}}^{-1} \leq 1$.
  \smallskip

  We will now transfer this bound to the heat kernel $p(t,o,o)$ using the time change relation $X_{t} = Y_{A_{t}^{-1}}$, where $A_{t}$ is defined by \eqref{eq:def_A}. Note that the mapping $t \mapsto p(t,o,o)$ is non-increasing. Therefore,
  \begin{align}\label{t1-1-4}
    p(2t, o, o)
    \;\leq\;
    \frac{1}{t} \int_{t}^{2t} p(s,o,o)\, \mathrm{d}s
    &\;=\;
    \frac{1}{t\theta(o)}
    \int_{t}^{2t} \Prob_{\!o}\bigl[X_{s} = o\bigr]\, \mathrm{d}s
    \nonumber\\[.5ex]
    &\;=\;
    \frac{1}{t\theta(o)}
    \int_{t}^{2t} \Prob_{\!o}\bigl[Y_{A_{s}^{-1}} = o\bigr]\, \mathrm{d}s
    \nonumber\\[.5ex]
    &\;=\;
    \frac{1}{t\theta(o)}
    \Mean_{o}\Biggl[
      \int_{A_{t}^{-1}}^{A_{2t}^{-1}}
        \frac{\theta(Y_{r})}{\pi_{m_{0}}(Y_{r})}\,
        \indicator_{\{Y_{r} = o\}}\,
      \mathrm{d}r
    \Biggr].
  \end{align}
  Note that $\theta(x) \leq \pi_{m_{0}}(x)$ for every $x \in \boldsymbol{V}$ by the definition of $\pi_{m_{0}}$ in \eqref{e4-3}. Hence, $A_{t} \leq t$ for every $t > 0$, which implies that $A_{t}^{-1} \geq t$ for all $t > 0$. By applying those facts to \eqref{t1-1-4} we get
  \begin{align} \label{eq:bound1}
    p(2t, o, o)
    \;\leq\;
    \frac{1}{t\theta(o)}
    \int_{t}^{\infty} \Prob_{\!o}\bigl[Y_{r} = o, r \leq A_{2t}^{-1}\bigr]\, \mathrm{d}r.
  \end{align}
  Finally, by combining \eqref{eq:bound1} and \eqref{t1-1-3}, we obtain for every $t > 0$,
  \begin{align*}
    p(2t,o,o)
    &\;\leq\;
    \frac{1}{t\theta(o)}
    \int_{t}^{\infty} \Prob_{\!o}\bigl[Y_{r} = o \bigr]\, \mathrm{d}r
    \\[.5ex]
    &\;=\;
    \frac{\pi_{m_{0}}(o)} {t\theta(o)}
    \int_{t}^{\infty} q(r, o, o)\, \mathrm{d}r
    \\[.5ex]
    &\;\leq\;
    c\, \cM_{0}^{(1+m_{0}\gamma)/\beta}\, R_{0}^{(1/p+1/q)m_{0}\gamma/(2\beta)}\,
    R_{1}^{m_{0}/\beta}\,
    \frac{\pi_{m_{0}}(o)}{t \theta(o)}
    \int_{t}^{\infty} r^{-d/2}\, \mathrm{d}r
    \\[.5ex]
    &\;\leq\;
    c\, \cM_{0}^{(1+m_{0}\gamma)/\beta}\, R_{0}^{(1/p+1/q)m_{0}\gamma/(2\beta)}\,
    R_{1}^{m_{0}/\beta}\,
    \frac{\pi_{m_{0}}(o)}{\theta(o)}\, t^{-d/2},
  \end{align*}
  where we used that $d \geq 3$ in the final step.

  (ii) When $W(x,y) \neq 0$ only if $x \sim y$, we define a measure $\pi$ on $\boldsymbol{V}$ as $\pi(x) \ldef \max\bigl\{1, \mu(x), \theta(x)\bigr\}$, $x \in \boldsymbol{V}$. Then, it is easy to see that \eqref{l2-1-1} and \eqref{l2-2-2} hold for $\theta = \pi$ with $K_{1} = K_{2} = 1$. In particular, the condition \eqref{r1-1-2} is not needed to ensure \eqref{l2-2-2} for $\pi$, so the restriction $p \geq d/(m_{0}-2)$ is not required. Further, note that in the nearest neighbour setting $\mu = \mu_{m_{0}}$ for any $m_{0} \geq 0$. Then, by repeating the above time change procedure we obtain \eqref{t1-1-1a}.
\end{proof}

\section{Annealed heat kernel bounds for ergodic environments} \label{sec:rcm}
This section is devoted to the proof of Theorem~\ref{t1-3}.
\begin{proof}[Proof of Theorem~\ref{t1-3}]
  Recall that due to the symmetry of the heat kernel we have $p_{t}^{\omega}(x,y) \leq p^{\omega}_{t}(x, x)^{1/2} p^{\omega}_{t}(y, y)^{1/2}$ for all $x, y \in \bbZ^{d}$ and $t > 0$. Since $p_{t}^{\omega}(x, x) = p_{t}^{\tau_x \omega}(0, 0)$ for all $x \in \bbZ^{d}$, by shift-invariance and the Cauchy-Schwarz inequality it suffices to consider the case $x = y = 0$.
  \smallskip

  (i) Suppose $d \ge 3$. Recall that $\cM_{1}^{\omega}$ is defined in \eqref{t1-3-1aa}. By taking the expectation in \eqref{t1-3-1a} it suffices to show that $\cM_{1}^{\omega} \in L^{1}(\prob)$. Then, by H\"older's inequality and Young's inequality we have
  \begin{align*}
    \mean\bigl[\cM_{1}^{\omega}\bigr]
    \;\leq\;
    \mean\Bigl[ (\cM_{0}^{\omega})^{(1+m_{0}\gamma)q_{1}/\beta} \Bigr]^{1/q_{1}}\,
    \mean\biggl[
      \biggl(1 + \frac{\mu_{m_{0}}^{\omega}(0)}{\theta^{\omega}(0)}\biggr)^{\!q_{2}}
    \biggr]^{1/q_{2}},
  \end{align*}
  and
  \begin{align*}
    &\mean\Bigl[ (\cM_{0}^{\omega})^{(1+m_{0}\gamma)q_{1}/\beta} \Bigr]
    \\[.5ex]
    &\mspace{36mu}\le\;
    c\, \mean\biggl[
      \sup_{R \geq 1}\Norm{\theta^{\omega}}{p, B(R)}^{(1+m_{0}\gamma)p_{1} q_{1}/\beta}
      +
      \sup_{R \geq 1}\Norm{\mu_{m_{0}}^{\omega}}{p, B(R)}^{(1+m_{0}\gamma)p_{1} q_{1}/\beta}
    \biggr]
    \\
    &\mspace{58mu}+
    c\,
    \mean\biggl[
      \sup_{R \geq 1}\Norm{\nu^{\omega}}{q, B(R)}^{(1+m_{0}\gamma)p_{2} q_{1}/\beta}
    \biggr]
    \\[.5ex]
    &\mspace{36mu}\leq\;
    c\,
    \biggl(
      1
      +
      \mean\Bigl[
        \sup_{R \geq 1}\Norm{\theta^{\omega}}{\tilde{p}, B(R)}^{\tilde{p}}
        + \sup_{R \geq 1} \Norm{\mu_{m_{0}}^{\omega}}{\tilde{p}, B(R)}^{\tilde{p}}
      \Bigr]
    \biggr)
    \biggl(
      1
      +
      \mean\Bigl[
        \sup_{R \geq 1}\Norm{\nu^{\omega}}{\tilde{q}, B(R)}^{\tilde{q}}
      \Bigr]
    \biggr).
  \end{align*}
  This together with maximal ergodic theorem yields that $\mean\bigl[ \cM_{1}^{\omega}\bigr] < \infty$ as desired.
  \smallskip

  (ii) Now, we consider the case $d=2$. 
  Since by definition $p^{\omega}(t, 0, 0) \leq \theta^{\omega}(0)^{-1}$, it suffices to prove the desired conclusion for every $t \geq 2$.  Similarly as before, we set $\pi_{m_{0}}^{\omega}(y) \ldef 1 \vee \mu_{m_{0}}^{\omega}(y) \vee \theta^{\omega}(y)$, $y \in \bbZ^{d}$. Given $\omega \in \Omega$, denote by $((Y_{t}^{\omega})_{t \geq 0}, \{\mathbf{P}_{\!x}^{\omega}\}_{x \in \bbZ^{d}}) $ the time-changed process $Y_{t}^{\omega} = X_{A_{t}}^{\omega}$, $t \geq 0$, with the additive functional $(A_{t})_{t \geq 0}$ as defined in \eqref{eq:def_A} (adapted to the model in this theorem). Moreover, as explained in \cite{Bi11, BCKW21}, the $\Omega$-valued process $(\tau_{Y_{t}^{\omega}} \omega)_{t \geq 0}$, also known as the process of the environment as seen from the particle, is reversible and ergodic with respect to the probability measure $\prob_{\pi_{m_{0}}}$ given by
  \begin{align}\label{t1-3-7}
    \prob_{\pi_{m_{0}}}(\mathrm{d}\omega)
    \;\ldef\;
    \frac{\pi^{\omega}_{m_{0}}(0)}{\mean\bigl[\pi_{m_{0}}^{\omega}(0)\bigr]}\, \prob(\mathrm{d}\omega).
  \end{align}

  Since $\theta^{\omega}(y) \leq \pi_{m_{0}}^{\omega}(y)$ and therefore $A_{t}^{-1} \geq t$, we get from \eqref{t1-1-4} that
  \begin{align*}
    p^{\omega}(2t, 0, 0)
    &\;\leq\;
    \frac{1}{t\theta^{\omega}(0)}\,
    \bfE_{0}^{\omega}\biggl[
      \int_{t}^{A_{2t}^{-1}} \indicator_{\{Y_{r}^{\omega} = 0\}}\, \mathrm{d}r
    \biggr]
    \\[.5ex]
    &\;\leq\;
    \frac{1}{t\theta^{\omega}(0)}\,
    \bfE_{0}^{\omega}\biggl[
      \int_{t}^{8t} \indicator_{\{Y_{r}^{\omega} = 0\}}\, \mathrm{d}r
    \biggr]
    \\
    &\mspace{32mu}+
    \frac{1}{t\theta^{\omega}(0)}
    \sum_{k = \lfloor \frac{\log (8t)}{\log 2} \rfloor}^{\infty}
    \bfE_{0}^{\omega}\biggl[
      \biggl(
        \int_{t}^{2^{k+1}} \indicator_{\{Y_{r}^{\omega} = 0\}}\, \mathrm{d}r
      \biggr) \indicator_{\{2^{k} \leq A_{2t}^{-1} < 2^{k+1}\}}
    \biggr]
    \\
    &\;\rdef\;
    I_{0}^{\omega}(t)
    + \sum_{k = \lfloor \frac{\log (8t)}{\log 2} \rfloor}^{\infty} I_{k}^{\omega}(t).
  \end{align*}
  By the bound in \eqref{t1-1-3} we have that
  \begin{align*}
    I_{0}^{\omega}(t)
    &\;=\;
    \frac{\pi_{m_{0}}^{\omega}(0)}{t\theta^{\omega}(0)}
    \int_{t}^{8t} q^{\omega}(r, 0, 0)\, \mathrm{d}r
    \\[.5ex]
    &\;\leq\;
    c\, \bigl(\cM_{0}^{\omega}\bigr)^{(1+m_{0}\gamma)/\beta}\,
    \frac{\pi_{m_{0}}^{\omega}(0)}{t\theta^{\omega}(0)}
    \int_{t}^{8t} r^{-1}\, \mathrm{d}r
    \;\leq\;
    c\, \cM_{1}^{\omega}\, t^{-1}.
  \end{align*}
  Hence, since $\cM_{1}^{\omega} \in L^{1}(\prob)$ as shown in (i),
  \begin{align}\label{t1-3-9}
    \mean\bigl[I_{0}^{\omega}(t)\bigr]
    \;\leq\;
    c\, t^{-1},
    \qquad \forall\, t > 0.
  \end{align}

  We now turn to bound $I_{k}^{\omega}(t)$. First, by the Cauchy-Schwarz inequality,
  \begin{align}\label{t1-3-6}
    I_{k}^{\omega}(t)
    &\;\leq\;
    \frac{1}{t\theta^{\omega}(0)}\,
    \bfE_{0}^{\omega}\biggl[
      \biggl(
        \int_{t}^{2^{k+1}} \indicator_{\{Y_{r}^{\omega} = 0\}}\, \mathrm{d}r
      \biggr)^{\!\!2}
    \biggr]^{1/2}
    \bfP_{0}^{\omega}\bigl[ A_{2t}^{-1} \geq 2^{k} \bigr]^{1/2}.
  \end{align}
  Note that for every $k \geq \lfloor \frac{\log (8t)}{\log 2} \rfloor$ and $t \geq 2$,
  \begin{align} \label{eq:ChapKolm}
    &\bfE_{0}^{\omega}\biggl[
      \biggl(
        \int_{t}^{2^{k+1}} \indicator_{\{Y_{r}^{\omega} = 0\}}\, \mathrm{d}r
      \biggr)^{2}
    \biggr]
    \nonumber\\
    &\mspace{36mu}=\;
    2\,
    \bfE_{0}^{\omega}\biggl[
      \int_{t}^{2^{k+1}}
        \int_{t}^{r_2} \indicator_{\{Y_{r_{1}}^{\omega} = 0, Y_{r_{2}}^{\omega} = 0\}}\, \mathrm{d}r_{1}\,
      \mathrm{d}r_{2}
    \biggr]
    \nonumber\\
    &\mspace{36mu}=\;
    2\, \pi_{m_{0}}^{\omega}(0)^2
    \int_{t}^{2^{k+1}}
      \int_{t}^{r_2} q^{\omega}(r_{1}, 0, 0)\, q^{\omega}(r_{2} - r_{1}, 0, 0)\, \mathrm{d}r_{1}\,
    \mathrm{d}r_{2}
    \nonumber\\
    &\mspace{36mu}\leq\;
    c\, \bigl(\cM_{0}^{\omega}\bigr)^{2(1+m_{0}\gamma)/\beta}\, \pi_{m_{0}}^{\omega}(0)^{2}
    \int_{t}^{2^{k+1}}
      \int_{t}^{r_2} r_{1}^{-1}\, \min\bigl\{1, (r_2-r_1)^{-1} \bigr\}\, \mathrm{d}r_{1}\,
    \mathrm{d}r_{2}
    \nonumber\\[.5ex]
    &\mspace{36mu}\leq\;
    c\, \bigl(\cM_{0}^{\omega}\bigr)^{2(1+m_{0}\gamma)/\beta}\,
    \pi_{m_{0}}^{\omega}(0)^{2} \cdot (k+1)^{2},
  \end{align}
  where we used in the second step that by the Markov property
  \begin{align*}
    \bfP_{0}^{\omega}\bigl[ Y_{r_1}^{\omega} = 0, Y_{r_2}^{\omega} = 0 \bigr]
    \;=\;
    q^{\omega}(r_{1}, 0, 0)\, q^{\omega}(r_{2} - r_{1}, 0, 0)\, \pi_{m_{0}}^{\omega}(0)^{2}
  \end{align*}
  for all $t \leq r_{1} < r_{2} \leq 2^{k+1}$, and in the third step we used again \eqref{t1-1-3}. Next, whenever $A_{2^{k}} = \int_{0}^{2^{k}}\! \theta^{\omega}(Y_{s}^{\omega})/ \pi_{m_{0}}^{\omega}(Y_{s}^{\omega})\, \mathrm{d}s \leq 2t$, by the Cauchy-Schwarz inequality,
  \begin{align*}
    2^{k}
    \;\leq\;
    \biggl(
      \int_{0}^{2^{k}}
        \mspace{-6mu}\frac{\theta^{\omega}(Y_{s}^{\omega})}{ \pi_{m_{0}}^{\omega}(Y_{s}^{\omega})}\,
      \mathrm{d}s
    \biggr)^{\!\!1/2}
    \biggl(
      \int_{0}^{2^{k}}
        \mspace{-6mu}\frac{\pi_{m_{0}}^{\omega}(Y_{s}^{\omega})}{\theta^{\omega}(Y_{s}^{\omega})}\,
      \mathrm{d}s
    \biggr)^{\!\!1/2}
    \;\leq\;
    \sqrt{2t}
    \biggl(
      \int_{0}^{2^{k}}
        \mspace{-6mu}\frac{\pi_{m_{0}}^{\omega}(Y_{s}^{\omega})}{\theta^{\omega}(Y_{s}^{\omega})}\,
      \mathrm{d}s
    \biggr)^{\!\!1/2}.
  \end{align*}
  Therefore, by the definition of $A_{2t}^{-1}$,
  \begin{align*} 
    \mathbf{P}_{0}^{\omega}\Bigl[A_{2t}^{-1} \geq 2^k\Bigr]
    &\;=\;
    \mathbf{P}_{0}^{\omega}
    \Biggl[
      \int_{0}^{2^k}
        \mspace{-6mu}\frac{\theta^{\omega}(Y_{s}^{\omega})}{\pi_{m_{0}}^{\omega}(Y_{s}^{\omega})}
      \mathrm{d}s
      \leq 2t
    \Biggr]
    \\[.5ex]
    &\;\leq\;
    \mathbf{P}_{0}^{\omega}
    \Biggl[
      \int_{0}^{2^k}
      \mspace{-6mu}\frac{\pi_{m_{0}}^{\omega}(Y_{s}^{\omega})}{\theta^{\omega}(Y_{s}^{\omega})}
      \mathrm{d}s
      \geq \frac{2^{2k}}{2t}
    \Biggr]
    \;\leq\;
    \frac{2t}{2^{2k}}
    \int_{0}^{2^k}
      \mspace{-6mu}\bfE_{0}^{\omega}\biggl[
        \frac{\pi_{m_{0}}^{\omega}(Y_{s}^{\omega})}{\theta^{\omega}(Y_{s}^{\omega})}
      \biggr]\,
    \mathrm{d}s.
  \end{align*}
  By combining this estimate and \eqref{eq:ChapKolm} with \eqref{t1-3-6} we get
  \begin{align}\label{t1-3-8}
    \mean\bigl[I_{k}^{\omega}(t)\bigr]
    &\;\leq\;
    \frac{c(k+1)}{2^{k}\sqrt{t}}\,
    \mean\Biggl[
      \bigl(\cM_{0}^{\omega}\bigr)^{(1+m_{0}\gamma)/\beta}\,
      \frac{\pi_{m_{0}}^{\omega}(0)}{\theta^{\omega}(0)}
      \biggl(
        \int_{0}^{2^{k}}
          \mspace{-6mu}\bfE_{0}^{\omega}\biggl[
            \frac{\pi_{m_{0}}^{\omega}(Y_{s}^{\omega})}{\theta^{\omega}(Y_{s})}
          \biggr]
        \mathrm{d}s
      \biggr)^{\!\!1/2}
    \Biggr]
    \nonumber\\
    &\;\leq\;
    \frac{c(k+1)}{2^{k}\sqrt{t}}\,
    \mean\Bigl[\bigl(\cM_{1}^{\omega}\bigr)^{2}\Bigr]^{1/2}\,
    \Biggl(
      \int_{0}^{2^k}
        \mspace{-6mu}\mean\biggl[
          \bfE_{0}^{\omega}\biggl[
            \frac{\pi_{m_{0}}^{\omega}(Y_{s}^{\omega})}{\theta^{\omega}(Y_{s}^{\omega})}
          \biggr]
        \biggr]\,
      \mathrm{d}s
    \Biggr)^{\!\!1/2}.
  \end{align}
  Recall that $\prob_{\pi_{m_{0}}}$ given by \eqref{t1-3-7} is invariant for the environment process $(\tau_{Y_{t}^{\omega}} \omega)_{t \geq 0}$ and that $\pi_{m_{0}}^{\omega} \geq 1$. Hence, by using the stationarity of $\theta^{\omega}$ and $\pi_{m_{0}}^{\omega}$ as well as the moment condition \eqref{t1-3-3a1},
  \begin{align*}
    \mean\biggl[
      \bfE_{0}^{\omega}\biggl[
        \frac{\pi_{m_{0}}^{\omega}(Y_{s}^{\omega})}{\theta^{\omega}(Y_{s}^{\omega})}
      \biggr]
    \biggr]
    &\;=\;
    \mean\biggl[
      \bfE_{0}^{\omega}\biggl[
        \frac{\pi_{m_{0}}^{\tau_{Y_{s}}\omega}(0)}{\theta^{\tau_{Y_{s}}\omega}(0)}
      \biggr]
    \biggr]
    \\[.5ex]
    &\;\leq\;
    \mean\biggl[
      \bfE_{0}^{\omega}\biggl[
        \frac{\pi_{m_{0}}^{\tau_{Y_{s}}\omega}(0)}{\theta^{\tau_{Y_{s}}\omega}(0)}
      \biggr] \pi_{m_{0}}^{\omega}(0)
    \biggr]
    \;=\;
    \mean\biggl[
      \frac{\pi_{m_{0}}^{\omega}(0)^2}{\theta^{\omega}(0)}
    \biggr]
    \;<\;
    \infty.
  \end{align*}
  On the other hand, by the moment condition \eqref{t1-3-3a} and following the same arguments as in (i) above we can derive $\mean\bigl[(\cM_{1}^{\omega})^{2}\bigr] < \infty$. Hence, combining the above estimates with \eqref{t1-3-8} we get for every $k \geq \lfloor \log (8t)/\log 2 \rfloor$,
  \begin{align*}
    \mean\bigl[ I_{k}^{\omega}(t) \bigr]
    \;\leq\;
    c\, (k+1)\, 2^{-k/2}\, t^{-1/2}.
  \end{align*}
  This together with \eqref{t1-3-9} yields
  \begin{align*}
    \mean\bigl[ p^{\omega}(2t, 0, 0) \bigr]
    \;\leq\;
    c\,
    \Biggl(
      t^{-1}
      + t^{-1/2}\,
      \sum_{k = \lfloor \frac{\log (8t)}{\log 2} \rfloor}^{\infty}
      \mspace{-15mu}(k+1)\, 2^{-k/2}
    \Biggr)
    \;\leq\;
    c\, \log(t)\, t^{-1}.
  \end{align*}
  This completes the proof of \eqref{t1-3-4a}.
\end{proof}

\section{Nash inequalities and heat kernel bounds on percolation clusters}
We aim to apply Theorems~\ref{thm:nash_int} and \ref{t1-1} to the case $\boldsymbol{V} = \cC_{\infty}(\omega)$. For that purpose, the main task is to verify Assumption~\ref{a2-2} for the percolation cluster $\cC_{\infty}(\omega)$.
\begin{prop}\label{l5-2}
  Let $d \geq 2$ and suppose that Assumptions~\ref{a1-4} and \ref{a1-5} hold. Then, for any $d' > d$ and $\prob_{0}$-a.e.\ $\omega \in \Omega$, there exist a random constant $R_{1}(\omega) \geq 2$ and non-random constants $C_{16}, C_{17} \in (0, \infty)$ such that for all $R \geq R_{1}(\omega)$, $C_{16} R^{\alpha_{0}} \leq r \leq R$, $x \in Q^{\omega}(0, R)$ and any $1 < \rho < d'$ and $\rho_{*} > \rho$ satisfying \eqref{a2-2-3}, we have for all $f\colon \cC_{\infty}(\omega) \to \bbR$,
  \begin{align}\label{l5-2-1}
    \norm{f - (f)_{Q^{\omega}(x, r)}}{\rho_{*}}{Q^{\omega}(x, r)}
    \;\leq\;
    C_{17}\, r^{1-d/d'}\,
    \Biggl(
      \sum_{\substack{y, z \in Q^{\omega}(x, r) \\y \sim z}}
      \mspace{-15mu}\bigl|f(y) - f(z)\bigr|^{\rho}
    \Biggr)^{\!\!1/\rho}.
  \end{align}
\end{prop}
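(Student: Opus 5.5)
The plan is the classical route from an isoperimetric inequality to a Sobolev–Poincaré inequality: first an $L^{1}$-Sobolev inequality via a discrete coarea argument, then a power trick to reach general $\rho$, then a median-to-mean replacement. Fix $d'>d$ and write $Q\ldef Q^{\omega}(x,r)$.

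\emph{Step 0: reduction to Assumption~\ref{a1-5}.} By \eqref{l5-1-3}, for $R$ large every $x\in Q^{\omega}(0,R)$ satisfies $d^{\omega}(0,x)\le C_{9}\max\{R,R^{\alpha_{0}}\}\le C_{9}R$. Hence $x\in B^{\omega}(0,R')$ with $R'\ldef C_{9}R$. Choose $R_{1}(\omega)\ge R_{2}(\omega)/\min\{1,C_{9}\}$ and $C_{16}\ge C_{7}C_{9}^{\alpha_{0}}$. Then for $C_{16}R^{\alpha_{0}}\le r\le R$ we have $C_{7}R'^{\alpha_{0}}\le r\le R'$, so \eqref{l5-1-2} is available on $Q$ (if $C_{9}<1$ one just uses $R'=R$). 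I expect this bookkeeping, together with the fact that the edges in \eqref{l5-1-2} are exactly the open cluster edges inside $Q$ appearing on the right of \eqref{l5-2-1}, to be routine but the place where care is needed.

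\emph{Step 1: $L^{1}$-Sobolev for functions supported on half of $Q$.} Let $g\ge 0$ on $Q$ with $|\{g>0\}|\le |Q|/2$, and set $A_{t}\ldef\{g>t\}$. By the layer-cake formula and Minkowski's inequality,
\begin{align*}
  \Bigl(\sum_{y\in Q} g(y)^{d'/(d'-1)}\Bigr)^{(d'-1)/d'}
  \;\le\;
  \int_{0}^{\infty} |A_{t}|^{1-1/d'}\,\mathrm{d}t .
\end{align*}
Each $A_{t}$ has size at most $|Q|/2$, so \eqref{l5-1-2} bounds the right-hand side by
\begin{align*}
  C_{8}^{-1} r^{1-d/d'}\int_{0}^{\infty}\bigl|\partial^{\omega}_{Q}A_{t}\bigr|\,\mathrm{d}t .
\end{align*}
The discrete coarea formula gives
\begin{align*}
  \int_{0}^{\infty}\bigl|\partial^{\omega}_{Q}A_{t}\bigr|\,\mathrm{d}t
  \;=\;
  \sum_{\{y,z\}\subset Q,\, y\sim z}|g(y)-g(z)| .
\end{align*}
This yields an unnormalized $\ell^{d'/(d'-1)}$–$\ell^{1}$ Sobolev inequality with constant $c\,r^{1-d/d'}$.

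\emph{Step 2: upgrade to exponent $\rho$.} Let $m$ be a median of $f$ on $Q$ and $h\ldef(f-m)_{+}$; the case $(f-m)_{-}$ is symmetric. Apply Step 1 to $g=h^{s}$ with $s\ldef\rho_{*}(d'-1)/d'$. Use $|h(y)^{s}-h(z)^{s}|\le s\,(h(y)^{s-1}+h(z)^{s-1})\,|f(y)-f(z)|$ and Hölder with exponents $\rho/(\rho-1)$ and $\rho$; the degrees are at most $2d$. From \eqref{a2-2-3} one checks $(s-1)\rho/(\rho-1)=\rho_{*}$ and $s-\rho_{*}(\rho-1)/\rho=1$. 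Hence
\begin{align*}
  \|h\|_{\ell^{\rho_{*}}(Q)}^{s}
  \;\le\;
  c\,r^{1-d/d'}\,\|h\|_{\ell^{\rho_{*}}(Q)}^{s-1}
  \Bigl(\sum_{y\sim z}|f(y)-f(z)|^{\rho}\Bigr)^{1/\rho}.
\end{align*}
Dividing out gives the bound for $\|f-m\|_{\ell^{\rho_{*}}(Q)}$ (unnormalized).

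\emph{Step 3: conclusion.} The normalized norm is at most the unnormalized one, since $|Q|\ge1$. Moreover $\|f-(f)_{Q}\|_{\rho_{*},Q}\le 2\|f-m\|_{\rho_{*},Q}$, because $|m-(f)_{Q}|\le\|f-m\|_{1,Q}\le\|f-m\|_{\rho_{*},Q}$. This gives \eqref{l5-2-1}. The constants depend only on $d,d',\rho,C_{8}$, so $C_{17}$ is non-random; the only randomness enters through $R_{1}(\omega)$ from Step 0. The genuinely substantive input is Step 1, where the isoperimetric inequality must be applied to all level sets at once, which is why the median (rather than the mean) normalization is used there.
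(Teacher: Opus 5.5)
Your argument is correct and follows the paper's proof essentially step by step. Both use median normalization, then the $\ell^{d'/(d'-1)}$--$\ell^{1}$ Sobolev inequality from the isoperimetric bound \eqref{l5-1-2} (the paper quotes it from \cite[Lemma~3.3.13]{Sa96}, while you derive it via layer-cake and coarea), then the power trick with exponent $\rho_{*}(d'-1)/d'$ plus H\"older, and finally the median-to-mean comparison.

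Two small fixes are needed.
\begin{itemize}
\item The norm in \eqref{l5-2-1} is the unnormalized $\ell^{\rho_{*}}(Q)$-norm. Passing to the normalized norm in Step~3 therefore only yields a weaker statement. Use the comparison $\|f-(f)_{Q}\|_{\ell^{\rho_{*}}(Q)}\le 2\|f-m\|_{\ell^{\rho_{*}}(Q)}$ directly instead; it is homogeneous, so it holds for the unnormalized norm as well.
\item In Step~0 you apply \eqref{l5-1-3} at scale $R$, which presupposes $x\in B^{\omega}(0,R)$. Apply it at the scale $\max\{d^{\omega}(0,x),R_{2}(\omega)\}$ instead, and use $\alpha_{0}<1$ to conclude $d^{\omega}(0,x)\le C_{9}R$ for $R$ large.
\end{itemize}
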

\begin{proof}
  \textsc{Step~1}. For any $f\colon \cC_{\infty}(\omega) \to \mathbb{R}$ let $m_{x,r}(f)$ denote the median of $f$ on $Q^{\omega}(x, r)$ under the normalized counting measure, that is,
  \begin{align*}
    \Bigl|
      \bigl\{
        y \in Q^{\omega}(x, r) : f(y) \leq m_{x,r}(f)
      \bigr\}
    \Bigr|
    \;\geq\;
    \frac{1}{2}\, \bigl|Q^{\omega}(x,r)\bigr|,
    \\
    \Bigl|
      \bigl\{
        y \in Q^{\omega}(x,r) : f(y) \geq m_{x,r}(f)
      \bigr\}
    \Bigr|
    \;\geq\;
    \frac{1}{2}\, \bigl|Q^{\omega}(x, r)\bigr|.
  \end{align*}
  We claim that to obtain \eqref{l5-2-1}, it suffices to prove that, for any $f\colon \cC_{\infty}(\omega) \to \bbR$,
  \begin{align}\label{l5-2-2}
    \norm{f - m_{x,r}(f)}{\rho_{*}}{Q^{\omega}(x, r)}
    \;\leq\;
    c\, r^{1-d/d'}\,
    \Biggl(
      \sum_{\substack{y, z \in Q^{\omega}(x, r) \\y \sim z}}
      \mspace{-15mu}\bigl|f(y) - f(z)\bigr|^{\rho}
    \Biggr)^{\!\!1/\rho}.
  \end{align}
  Indeed,
  \begin{align*}
    &\norm{f - (f)_{Q^{\omega}(x, r)}}{\rho_{*}}{Q^{\omega}(x, r)}
    \\[.5ex]
    &\mspace{36mu}\leq\;
    \norm{f - m_{x, r}(f)}{\rho_{*}}{Q^{\omega}(x, r)}
    \bigl|Q^{\omega}(x, r)\bigr|^{1/\rho_*}\,
    \bigl|m_{x, r}(f) - (f)_{Q^{\omega}(x, r)}\bigr|,
  \end{align*}
  %
  %
  and by H\"older's inequality
  \begin{align*}
    \bigl|m_{x, r}(f) - (f)_{Q^{\omega}(x,r)}\bigr|
    \;\leq\;
    |Q^{\omega}(x, r)|^{-1/\rho_{*}}\, \norm{f - m_{x, r}(f)}{\rho_{*}}{Q^{\omega}(x, r)}.
  \end{align*}
  %
  %
  Combining the above estimates with \eqref{l5-2-2} yields \eqref{l5-2-1}.
  \smallskip

  \textsc{Step~2.} We are now going to prove \eqref{l5-2-2}. First, we use the isoperimetric inequality \eqref{l5-1-2} and \cite[Lemma~3.3.13]{Sa96} to obtain that, for any $f\colon\cC_{\infty}(\omega) \to \bbR$,
  \begin{align}\label{l5-2-3}
    \norm{f - m_{x, r}(f)}{d_{*}}{Q^{\omega}(x, r)}
    \;\leq\;
    c\, r^{1-d/d'}\,
    \sum_{\substack{y, z \in Q^{\omega}(x, r) \\ y \sim z}}
    \mspace{-12mu}\bigl|f(y) - f(z)\bigr|,
  \end{align}
  where $d_{*} \ldef d'/(d'-1)$. Without loss of generality we may assume that $m_{x, r}(f) = 0$ (otherwise we replace $f$ by $f - m_{x, r}(f)$). Let $1 < \rho < d'$ and $\rho_{*} > \rho$ satisfy \eqref{a2-2-3}. Then, writing $f_{+}$ and $f_{-}$ for the positive and negative part of $f$, respectively, it is easy to verify that also $m_{x, r}(f_{+}^{\rho_{*}/d_{*}}) = 0$ and $m_{x,r}(f_{-}^{\rho_{*}/d_{*}}) = 0$. Applying \eqref{l5-2-3} to the function $f_{+}^{\rho_{*}/d_{*}}$ we get
  \begin{align}\label{l5-2-4}
    \norm{f_{+}}{\rho_{*}}{Q^{\omega}(x, r)}
    &\;=\;
    \norm{%
      f_{+}^{\rho_{*}/d_{*}} - m_{x, r}(f_{+}^{\rho_{*}/d_{*}})
    }{d_{*}}{Q^{\omega}(x, r)}^{d_{*}/\rho_{*}}
    \nonumber\\
    &\;\leq\;
    c\, r^{(1-d/d')d_{*}/\rho_{*}}\,
    \Biggl(
      \sum_{\substack{y, z \in Q^{\omega}(x, r) \\ y \sim z}}
      \mspace{-12mu}%
      \bigl|f_{+}(y)^{\rho_{*}/d_{*}} - f_{+}(z)^{\rho_{*}/d_{*}}\bigr|
    \Biggr)^{\!\!d_{*}/\rho_{*}}.
  \end{align}
  %
  %
  As a consequences of the mean-value theorem we have that
  \begin{align*}
    \bigl|
      f_{+}(y)^{\rho_{*}/d_{*}}
      - f_{+}(z)^{\rho_{*}/d_{*}}
    \bigr|
    \;\leq\;
    c\,
    \bigl| f_{+}(y) - f_{+}(z) \bigr|
    \bigl(
      f_{+}(y)^{\rho_{*}/d_{*}-1}
      + f_{+}(z)^{\rho_{*}/d_{*}-1}
    \bigr)
  \end{align*}
  which implies together with an application of  H\"older's inequality that
  \begin{align*}
    &\sum_{\substack{y, z \in Q^{\omega}(x,r) \\ y \sim z}}
    \mspace{-12mu}\bigl|
      f_{+}(y)^{\rho_{*}/d_{*}} - f_{+}(z)^{\rho_{*}/d_{*}}
    \bigr|
    \\
    &\mspace{36mu}\leq\;
    c\,
    \Biggl(
      \sum_{\substack{y, z \in Q^{\omega}(x,r) \\ y \sim z}}
      \mspace{-12mu}\bigl| f_{+}(y) - f_{+}(z) \bigr|^{\rho}
    \Biggr)^{\!\!1/\rho}\,
    \norm{f_{+}^{\rho_{*}/d_{*}-1}}{\rho/(\rho-1)}{Q^{\omega}(x, r)}
    \\
    &\mspace{36mu}\leq\;
    c\,
    \Biggl(
      \sum_{\substack{y, z \in Q^{\omega}(x,r) \\ y \sim z}}
      \mspace{-12mu}\bigl| f_{+}(y) - f_{+}(z) \bigr|^{\rho}
    \Biggr)^{\!\!1/\rho}\,
    \norm{f_{+}}{\rho_{*}}{Q^{\omega}(x, r)}^{\rho_{*}/d_{*}-1},
  \end{align*}
  %
  %
  where the last step is due to the fact $(\rho_{*}/d_{*}-1) \rho/(\rho-1) = \rho_{*}$, which can be verified by \eqref{a2-2-3}. This together with \eqref{l5-2-4} yields that
  \begin{align*}
    \norm{f_{+}}{\rho_{*}}{Q^{\omega}(x, r)}^{d_{*}/\rho_{*}}
    \;\leq\;
    c\, r^{(1-d/d')d_{*}/\rho_{*}}\,
    \Biggl(
      \sum_{\substack{y, z \in Q^{\omega}(x, r) \\ y \sim z}}
      \mspace{-12mu}\bigl|f_{+}(y) - f_{+}(z)\bigr|^{\rho}
    \Biggr)^{\!\!d_{*}/(\rho \rho_{*})},
  \end{align*}
  which can be rewritten as
  \begin{align}\label{l5-2-5}
    \norm{f_{+}}{\rho_{*}}{Q^{\omega}(x, r)}
    \;\leq\;
    c\, r^{1-d/d'}\,
    \Biggl(
      \sum_{\substack{y, z \in Q^{\omega}(x, r) \\ y \sim z}}
      \mspace{-12mu}\bigl|f_{+}(y) - f_{+}(z)\bigr|^{\rho}
    \Biggr)^{\!\!1/\rho}.
  \end{align}
  Noting that also $m_{x,r}(f_{-}^{\rho_{*}/d_{*}}) = 0$, we follow the same arguments as for $f_{+}$ to obtain
  \begin{align*}
    \norm{f_{-}}{\rho_{*}}{Q^{\omega}(x, r)}
    \;\leq\;
    c\, r^{1-d/d'}\,
    \Biggl(
      \sum_{\substack{y, z \in Q^{\omega}(x, r) \\ y \sim z}}
      \mspace{-12mu}\bigl|f_{-}(y) - f_{-}(z)\bigr|^{\rho}
    \Biggr)^{\!\!1/\rho}.
  \end{align*}
  By combining this estimate with \eqref{l5-2-5} and using the fact that
  \begin{align*}
    \bigl|f_{+}(y) - f_{+}(z)\bigr| + \bigl|f_{-}(y) - f_{-}(z)\bigr|
    \;\leq\;
    \bigl|f(y) - f(z)\bigr|,
    \qquad \forall\, y, z \in Q^{\omega}(x, r),
  \end{align*}
  we obtain \eqref{l5-2-2}, which implies \eqref{l5-2-1} by Step~1.
\end{proof}

\begin{proof}[Proof of Theorems~\ref{t1-4} and \ref{t1-5}]
  The results are immediate from Theorems~\ref{thm:nash_int} and \ref{t1-1} as soon as Assumption~\ref{a2-2} is verified. To that aim, note that under Assumption~\ref{a1-5}, for $\prob_{0}$-a.e.\ $\omega$, the volume growth condition \eqref{a2-2-1} holds on $\cC_{\infty}(\omega)$ due to \eqref{l5-1-1}. On the other hand, by Proposition~\ref{l5-2} we have that, $\prob_{0}$-almost surely, \eqref{a2-2-2} holds but with the balls $B^{\omega}(x, r)$ (induced by the standard graph distance $d^{\omega}$) replaced by $Q^{\omega}(x, r)$. However, by the volume growth condition on $Q^{\omega}(x, r)$ (see \eqref{l5-1-1}) as well as the fact that the $\ell^\infty$ distance $|x-y|_{\infty}$ and the graph distance $d^{\omega}$ are compartable on large scales (see \eqref{l5-1-3}), condition \eqref{a2-2-2} follows from this.
\end{proof}

\appendix
\section{Technical estimates}
\begin{lemma} \label{lem:a1}
  For any $a, b > 0$ and any $\alpha > 1$,
  \begin{align*}
    \bigl(a^{1+\alpha} - b^{1+\alpha}\bigr)
    \bigl(a^{1-\alpha} - b^{1-\alpha}\bigr)
    \;\geq\;
    (a - b)^{2} - \alpha^{2} a^{1-\alpha} b^{1-\alpha}
    \biggl(
      \frac{a^{\alpha-1} + b^{\alpha-1}}{2}
    \biggr)^{2} (a-b)^{2}.
  \end{align*}
\end{lemma}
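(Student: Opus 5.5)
The plan is to reduce the inequality to a one-variable statement about the function $t\mapsto t^{\alpha}$ by direct expansion. This reduction will also show that the lemma holds as written only for $\alpha\ge 2$. For $\alpha\in(1,2)$ it must be modified; see the last paragraph, which matters because the lemma is applied with $\alpha=m_{0}/2>d/2$, which may lie in $(1,2)$.

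\textbf{Step 1 (expansion).} Multiplying out the left-hand side gives $a^{2}+b^{2}-a^{1+\alpha}b^{1-\alpha}-a^{1-\alpha}b^{1+\alpha}$, while $(a-b)^{2}=a^{2}+b^{2}-2ab$. Hence
\begin{align*}
  \bigl(a^{1+\alpha}-b^{1+\alpha}\bigr)\bigl(a^{1-\alpha}-b^{1-\alpha}\bigr)-(a-b)^{2}
  \;=\;-ab\Bigl(\bigl(\tfrac{a}{b}\bigr)^{\alpha/2}-\bigl(\tfrac{b}{a}\bigr)^{\alpha/2}\Bigr)^{2}
  \;=\;-(ab)^{1-\alpha}\bigl(a^{\alpha}-b^{\alpha}\bigr)^{2}.
\end{align*}
The common factor $(ab)^{1-\alpha}$ also appears on the right-hand side of the claim. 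So the lemma is equivalent to
\begin{align*}
  \bigl|a^{\alpha}-b^{\alpha}\bigr|\;\le\;\alpha\,\frac{a^{\alpha-1}+b^{\alpha-1}}{2}\,|a-b|.
\end{align*}

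\textbf{Step 2 (Hermite--Hadamard).} Write $a^{\alpha}-b^{\alpha}=\alpha\int_{b}^{a}t^{\alpha-1}\,\mathrm{d}t$. The reduced inequality then says that the mean of $t^{\alpha-1}$ over the interval between $a$ and $b$ is at most the average of its two endpoint values. For $\alpha\ge 2$ the map $t\mapsto t^{\alpha-1}$ is convex on $(0,\infty)$, so this is exactly the right half of the Hermite--Hadamard inequality, and the lemma follows.

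\textbf{Step 3 (the case $1<\alpha<2$, the main obstacle).} Here $t\mapsto t^{\alpha-1}$ is concave, so Hermite--Hadamard goes the wrong way and the reduced inequality fails. Take $b\to 0$, $a=1$: the left side tends to $1$, the right side to $\alpha/2<1$. Equivalently, both sides of the original claim are of order $-b^{(1-\alpha)}$, with constants $1$ and $\alpha^{2}/4$ respectively, so the stated bound is false in this range. For $1<\alpha<2$ I would therefore replace the lemma by what the mean value theorem gives:
\begin{align*}
  |a^{\alpha}-b^{\alpha}|\le\alpha\max\{a,b\}^{\alpha-1}|a-b|\le\alpha\,(a^{\alpha-1}+b^{\alpha-1})\,|a-b|.
\end{align*}
This is the stated inequality with $\alpha^{2}$ replaced by $4\alpha^{2}$. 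That change only alters the numerical constant $m_{0}^{2}/4$ in the proof of Lemma~\ref{l2-2}; every later step there (Jensen, Young, \eqref{l2-2-1}, \eqref{l2-2-2}) is insensitive to it, so \eqref{eq:energy2} remains valid with a larger $C_{14}$.
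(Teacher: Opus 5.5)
Your argument is the paper's own: the paper also rewrites the left-hand side as $(a-b)^{2}-(ab)^{1-\alpha}(a^{\alpha}-b^{\alpha})^{2}$, writes $a^{\alpha}-b^{\alpha}=\alpha\int t^{\alpha-1}\,\mathrm{d}t$, and bounds the integral by the trapezoid (Hermite--Hadamard) rule for the convex function $t\mapsto t^{\alpha-1}$. For $\alpha\ge 2$ this is a complete proof.

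Your caveat is correct, and it exposes an error in the paper. The paper applies this convexity step for every $\alpha>1$, but $t^{\alpha-1}$ is concave when $1<\alpha<2$. Your limit $a=1$, $b\to 0$ shows the lemma as stated is then false: the two sides behave like $-b^{1-\alpha}$ and $-\tfrac{\alpha^{2}}{4}b^{1-\alpha}$. Concretely, $\alpha=3/2$, $a=1$, $b=10^{-2}$ gives a left side of about $-9.0$ and a right side of about $-5.7$.

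This range does occur in the paper. The lemma is used with $\alpha=m_{0}/2$ under the sole assumption $m_{0}>d$, for example $d=3$ and $m_{0}\in(3,4)$. Your mean-value-theorem replacement, with $4\alpha^{2}$ in place of $\alpha^{2}$, is the right repair. In the proof of Lemma~\ref{l2-2} it only changes the factor $m_{0}^{2}/4$ to $m_{0}^{2}$, so \eqref{eq:energy2}, and everything downstream, survives with a larger $C_{14}$.
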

\begin{proof}
  Note that
  \begin{align*}
    \bigl(a^{1+\alpha} - b^{1+\alpha}\bigr)
    \bigl(a^{1-\alpha} - b^{1-\alpha}\bigr)
    &\;=\;
    (a-b)^{2} - a^{1-\alpha} b^{1-\alpha}
    \bigl(a^\alpha - b^{\alpha}\bigr)^2
    \\[.5ex]
    &\;=\;
    (a-b)^{2} - \alpha^{2} a^{1-\alpha} b^{1-\alpha}\,
    \biggl(
      \int_{a}^{b} t^{\alpha-1}\, \mathrm{d}t
    \biggr)^{2}.
  \end{align*}
  Further, for any convex function $f:[a,b]\rightarrow \bbR$, we have
  \begin{align*}
    \int_{a}^{b} f(t)\, \mathrm{d}t
    \;\leq\;
    \frac{1}{2} \bigl(f(a) + f(b)\bigr) (a-b),
  \end{align*}
  which is due to the fact that the area under the graph of $f$ on the interval $[a, b]$ is bounded from above by the area under the secant connecting $a$ and $b$. Using this for the function $f(t) = t^{\alpha-1}$ and combining with the above, we obtain the claim.
\end{proof}

\subsection*{Acknowledgement}
S.A.\ has been supported by EPSRC grant EP/W022923/1.

\bibliographystyle{abbrv}
\bibliography{literature}

\end{document}